\numberwithin{equation}{section}
\definecolor{darkgreen}{rgb}{0,0.45,0}
  \newtheorem{proposition}{Proposition}[section]
  \newtheorem{lemma}[proposition]{Lemma}
  \newtheorem{theorem}[proposition]{Theorem}
  \theoremstyle{definition}
  \newtheorem{example}[proposition]{Example}
\theoremstyle{remark}
  \newtheorem{remark}[proposition]{Remark}
  \newcounter{c}
  \renewcommand{\[}{\setcounter{c}{1}$$}
  \newcommand{\etyk}[1]{\vspace{-7.4mm}$$\begin{equation}\Label{#1}
  \addtocounter{c}{1}}
  \renewcommand{\]}{\ifnum \value{c}=1 $$\else \end{equation}\fi}
\newcommand*{\inlineequation}[2][]{%
  \begingroup
    % Put \refstepcounter at the beginning, because
    % package `hyperref' sets the anchor here.
    \refstepcounter{equation}%
    \ifx\\#1\\%
    \else
      \label{#1}%
    \fi
    % prevent line breaks inside equation
    \relpenalty=10000 %
    \binoppenalty=10000 %
    \ensuremath{%
      % \displaystyle % larger fractions, ...
      #2%
    }%
    ~\@eqnnum
  \endgroup
}
\def\@settitle{\begin{center}%
  \baselineskip14\p@\relax
  \bfseries
  \uppercasenonmath\@title
  \@title
  \ifx\@subtitle\@empty\else
     \\[1ex]\uppercasenonmath\@subtitle
     \footnotesize\mdseries\@subtitle
  \fi
  \end{center}%
}
\def\subtitle#1{\gdef\@subtitle{#1}}
\def\@subtitle{}
\newcommand{\coten}[1]{\raisebox{-7pt}{\ensuremath{\stackrel{\displaystyle  \Box}{\scriptstyle { #1}}}}}
\newcommand{\morcoten}{\scalebox{.7}{\ensuremath{\,\Box \,}}}
\newcommand{\diagcoten}{\scalebox{.55}{\ensuremath \Box}}
\newcommand{\expcoten}[1]{\scalebox{.55}{
{\raisebox{-8pt}{\ensuremath{\stackrel{\displaystyle  \Box}{\scriptstyle { #1}}}}}}}
\begin{document}

\title[Crossed modules of monoids II.]{Crossed modules of monoids II.}
\subtitle{Relative crossed modules}

\author{Gabriella B\"ohm} 
\address{Wigner Research Centre for Physics, H-1525 Budapest 114,
P.O.B.\ 49, Hungary}
\email{bohm.gabriella@wigner.mta.hu}
\date{March 2018}
%\subjclass{}
  
\begin{abstract}
This is the second part of a series of three strongly related papers in which
three equivalent structures are studied:
\begin{itemize}
\item[-] internal categories in categories of monoids; defined in terms of
pullbacks relative to a chosen class of spans
\item[-] crossed modules of monoids relative to this class of spans 
\item[-] simplicial monoids of so-called Moore length 1 relative to this class
  of spans. 
\end{itemize}
The most important examples of monoids that are covered are small categories
(treated as monoids in categories of spans) and bimonoids in symmetric
monoidal categories (regarded as monoids in categories of comonoids). 
In this second part we define relative crossed modules of monoids and prove
their equivalence with the relative categories of Part I. 
\end{abstract}
  
\maketitle

%%%%%%%%%%%%%%%%%%%%%%%%%%%%%%%  INTRODUCTION  %%%%%%%%%%%%%%%%%%%%%%%%%%%%%%

\section*{Introduction} \label{sec:intro}

Since their appearance in \cite{Whitehead}, crossed modules of groups have
been intensively studied and applied in various contexts; see e.g. the reviews
\cite{Porter:Menagerie,Porter:HQFT,Paoli} and the references in them.
They admit several different descriptions: a {\em simplicial group whose
Moore complex is concentrated in degrees 1 and 2} turns out to be the internal
nerve of a {\em strict 2-group} and the Moore complex yields a {\em crossed
module}. These constructions establish, in fact, equivalences between these
three notions.   

The first (to our knowledge) proof of the equivalence between crossed modules and strict 2-groups --- that is, category objects in the category of groups --- can be found in \cite{BrownSpencer}, where it is referred also to the unpublished proof \cite{Duskin}.
Based on the fact that groups constitute a semi-Abelian category, another short and deeply conceptual proof is due to George Janelidze \cite{Janelidze}.    

More recently, however, some results on, and certain applications of crossed
modules of groups were extended to crossed modules of groupoids
\cite{BrownIcen} and of Hopf algebras \cite{Aguiar, Villanueva, Majid,
FariaMartins, Emir}. To these generalizations Janelidze's proof can not be
applied directly. Our aim is therefore to develop a wider theory of crossed
modules of monoids in more general monoidal categories which are not expected
to have all pullbacks (not even along split epimorphisms). We have the above
two main examples in mind: 
\begin{itemize}
\item[{-}] Categories of spans whose monoids are small categories, including
  groupoids in particular.
\item[{-}] Categories of comonoids in symmetric monoidal categories whose
  monoids are bimonoids including Hopf monoids in particular.
\end{itemize}

In the first part \cite{Bohm:Xmod_I} of this series of papers  we discussed
classes of spans satisfying appropriate conditions; and relative pullbacks with
respect to them. Assuming that such pullbacks exist --- as they do in our key
examples --- we introduced a monoidal category with monoidal product provided by
these pullbacks. We defined a relative (to the chosen class of spans) category as a
monoid in this monoidal category. It is given by the usual data
\begin{equation}\label{eq:int_cat} \tag{\mbox{$\ast$}} 
\xymatrix@C=35pt{
B \ar@{ >->}|(.55){\, i \,}[r] &
A \ar@{->>}@<-5pt>[l]_-s \ar@{->>}@<5pt>[l]^- t &
A \coten B A \ar[l]_-d}
\end{equation}
where $\coten B$ is now a relative pullback.

In the current article we make the next step and prove the equivalence of the
following categories for a fixed class of suitable spans in a monoidal category:
\begin{itemize} 
\item[{-}] the category of relative categories in the category of monoids,
\item[{-}] the category of relative crossed modules of monoids.
\end{itemize}

Our methodology is inspired by Janelidze's paper \cite{Janelidze}: In Section
\ref{sec:split_epi} we investigate first some category of split epimorphisms
of monoids. We obtain an equivalent description of a split epimorphism of
monoids 
$\xymatrix@C=15pt{
B \ar@{ >->}@<2pt>^-i [r] &
A \ar@{->>}@<2pt>[l]^-s}$
in terms of a distributive law which allows for handy characterizations of
possible morphisms $t$ and $d$ in \eqref{eq:int_cat}. This is used in Section
\ref{sec:refl_graph} and Section \ref{sec:rel_cat}, respectively, to present
equivalent descriptions of some reflexive graphs of monoids in terms of
relative pre-crossed modules of monoids; and of relative category objects
\eqref{eq:int_cat} in categories of monoids in terms of relative crossed
modules of monoids. Applying our results to categories of spans and to
categories of comonoids, respectively, we re-obtain the definitions of crossed
modules of groupoids in \cite{BrownIcen} and of crossed modules of Hopf
monoids in \cite{Villanueva}, respectively.  

Our next aim is to extend to our setting the equivalence of strict 2-groups and the category of crossed modules of groups to the further category of simplicial groups whose Moore complex has length 1. This will be achieved in Part III of this series \cite{Bohm:Xmod_III}. 

\subsection*{Acknowledgement} 
The author's interest in the subject was triggered by the excellent workshop
{\em `Modelling Topological Phases of Matter -- TQFT, HQFT, premodular and
  higher categories, Yetter-Drinfeld and crossed modules in disguise'} in
Leeds UK, 5-8 July 2016. It is a pleasure to thank the organizers,
Zolt\'an K\'ad\'ar, Jo\~ao Faria Martins, Marcos Cal\c{c}ada and Paul Martin
for the experience and a generous invitation.
Financial support by the Hungarian National Research, Development and
Innovation Office – NKFIH (grant K124138) is gratefully acknowledged.  

%%%%%%%%%%%%%%%%%%%%%%%%%%%%%%%%%  SEC 4 %%%%%%%%%%%%%%%%%%%%%%%%%%%%%%%%%%%%%

\section{Split epimorphisms of monoids versus distributive laws}
\label{sec:split_epi}

We freely use definitions, notation and results from
\cite{Bohm:Xmod_I}. 
Throughout, the composition of some morphisms 
$\xymatrix@C=12pt{A \ar[r]^-g &B}$ and
$\xymatrix@C=12pt{B \ar[r]^-f &C}$
in an arbitrary category will be denoted by 
$\xymatrix@C=16pt{A \ar[r]^-{f.g} &C}$.
Identity morphisms will be denoted by $1$ (without any reference to the
(co)domain object if it causes no confusion).
In any monoidal category $\mathsf C$ the monoidal product will be
denoted by juxtaposition and the monoidal unit will be $I$. For the monoidal
product of $n$ copies of the same object $A$ also the power notation $A^n$
will be used. 
For any monoid $A$ in $\mathsf C$, the multiplication and the unit
morphisms will be denoted by 
$\xymatrix@C=12pt{A^2 \ar[r]^-m &A}$ and
$\xymatrix@C=12pt{I \ar[r]^-u &A}$, respectively.
If $\mathsf C$ is also braided, then for the braiding the symbol $c$ will be
used.

Recall that an {\em admissible class $\mathcal S$ of spans} in an arbitrary
category was defined in \cite[Definition 2.1]{Bohm:Xmod_I}. 
The {\em pullback 
$$
\xymatrix{A\coten B C \ar[r]^-{p_C} \ar[d]_-{p_A} &
C \ar@{..>}[d]^-g \\
A \ar@{..>}[r]_-f &
B}
$$
of the cospan
$\xymatrix@C=15pt{
A \ar[r]^-f & B & \ar[l]_-g C}
$
relative to such a class $\mathcal S$} was introduced in
\cite[Definition 3.1]{Bohm:Xmod_I}. 
\cite[Assumption 4.1]{Bohm:Xmod_I} asserts that there exist the relative
pullbacks of those cospans whose {\em legs are in $\mathcal S$} in the sense of
\cite[Definition 2.9]{Bohm:Xmod_I}. 
Under this assumption it was proven in \cite[Corollary 4.6]{Bohm:Xmod_I} that 
the spans whose legs are in $\mathcal S$ (again in the sense of
\cite[Definition 2.9]{Bohm:Xmod_I}) constitute a monoidal category.
An {\em $\mathcal S$-relative category} is defined as a monoid therein, see 
\cite[Definition 4.9]{Bohm:Xmod_I}.

A class of spans in a monoidal category, which is compatible with
the monoidal structure --- meaning {\em multiplicativity} and {\em unitality}
in a natural sense --- was termed {\em monoidal} in \cite[Definition
2.5]{Bohm:Xmod_I}. It is discussed in \cite[Example 2.8]{Bohm:Xmod_I} that a
monoidal admissible class $\mathcal S$ of spans in a braided monoidal category
$\mathsf C$ induces a monoidal admissible class of spans in the category of
monoids in $\mathsf C$; and it is shown in \cite[Example 4.4]{Bohm:Xmod_I}
that if $\mathcal S$ satisfies \cite[Assumption 4.1]{Bohm:Xmod_I} then so does
the induced class in the category of monoids. This allows for the discussion
of relative categories in the category of monoids.

In this paper we will be interested mainly in these relative categories of
monoids. They contain, in particular, a split epimorphism of monoids
(consisting of the morphisms $i$ and $s$ of \eqref{eq:int_cat} in the
Introduction). So we  start with the analysis of the following category of
split epimorphisms of monoids. 

\begin{theorem} \label{thm:SplitEpi_vs_DLaw}
Consider a monoidal admissible class $\mathcal S$ of spans in a monoidal
category $\mathsf C$ for which \cite[Assumption 4.1]{Bohm:Xmod_I} holds. The
following categories are equivalent.
\begin{itemize}
\item[{$\mathsf{Split}$}]\hspace{-.3cm} $\mathsf{EpiMon}_{\mathcal S}(\mathsf
  C)$ whose \\
\underline{objects} are split epimorphisms 
$\xymatrix@C=15pt{
B \ar@<-2pt>@{ >->}[r]_-i &
A \ar@<-2pt>@{->>}[l]_-s}$ 
of monoids in $\mathsf C$ subject to the following conditions.
\begin{itemize}
\item[{(a)}] 
$\xymatrix@C=12pt{
A \ar@{=}[r] & A \ar[r]^-s & B}\in \mathcal S$; so that by the unitality of
  $\mathcal S$ and \cite[Assumption 4.1]{Bohm:Xmod_I}, there exists the $\mathcal
  S$-relative pullback
$$
\xymatrix{
A \coten B I \ar[r]^-{p_I} \ar[d]_-{p_A} &
I \ar[d]^-u \\
A \ar[r]_-s &
B.}
$$
\item[{(b)}]
$q:=\xymatrix@C=18pt{
(A \coten B I)B \ar[r]^-{p_Ai} &
A^2 \ar[r]^-m &
A}$ is invertible.
\end{itemize}
\underline{morphisms} are pairs of monoid morphisms 
$(\xymatrix@C=12pt{B \ar[r]^-b & B'},\xymatrix@C=12pt{A \ar[r]^-a & A'})$
such that $s'.a=b.s$ and $i'.b=a.i$.
\smallskip

\item[{$\mathsf{Dist}\hspace{.08cm}$}]\hspace{-.38cm} $\mathsf{Law}_{\mathcal
  S}(\mathsf C)$ whose \\ 
\underline{objects} consist of monoids $B$ and $Y$, a monoid morphism
$\xymatrix@C=12pt{Y \ar[r]^-e & I}$ and a distributive law
$\xymatrix@C=12pt{BY \ar[r]^-x & YB}$ subject to the following conditions.
\begin{itemize}
\item[{(a')}] 
$\xymatrix@C=12pt{
Y \ar@{=}[r] & Y \ar[r]^-e & I}\in \mathcal S$
and
$\xymatrix@C=12pt{
B \ar@{=}[r] & B \ar@{=}[r] & B}\in \mathcal S$.
Then by the monoidality of $\mathcal S$ also
$\xymatrix@C=12pt{
YB \ar@{=}[r] & Y \ar[r]^-{e1} & B}\in \mathcal S$
so by \cite[Assumption 4.1]{Bohm:Xmod_I} there exists the $\mathcal S$-relative
pullback $YB \coten B I$ in the diagram below.
\item[{(b')}] $e1.x=1e$.
\item[{(c')}] The morphism $f$ occurring in the diagram below is invertible.
(It is well-defined since by {\rm{(a')}} and condition {\rm{(POST)}} in 
\cite[Definition 2.1]{Bohm:Xmod_I}, 
$\xymatrix@C=12pt{
YB & \ar[l]_-{1u} Y \ar[r]^-e & I}\in \mathcal S$.)
\end{itemize}
$$
\xymatrix@C=10pt@R=10pt{
Y \ar@/^1.2pc/[rrrd]^-e \ar@/_1.2pc/[rddd]_-{1u} \ar@{-->}[rd]^-f \\
& YB \coten B I \ar[rr]^-{p_I} \ar[dd]_(.4){p_{YB}} &&
I \ar[dd]^-u \\
\\
& YB \ar[rr]_-{e1} &&
B.}
$$
\underline{morphisms} are pairs of monoid morphisms 
$(\xymatrix@C=12pt{B \ar[r]^-b & B'},\xymatrix@C=12pt{Y \ar[r]^-y & Y'})$
such that $e'.y=e$ and $x'.by=yb.x$.
\end{itemize}
\end{theorem}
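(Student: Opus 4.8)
The plan is to adapt Janelidze's semidirect-product decomposition of a split epimorphism to the relative setting, using the relative pullback $A\coten B I$ in place of the kernel of $s$. Concretely, I would construct quasi-inverse functors $\Phi\colon \mathsf{SplitEpiMon}_{\mathcal S}(\mathsf C)\to\mathsf{DistLaw}_{\mathcal S}(\mathsf C)$ and $\Psi$ in the opposite direction, together with natural isomorphisms exhibiting both composites as identities. On objects, $\Phi$ sends a split epimorphism $(i,s)$ to the data $(B,Y,e,x)$ where $Y:=A\coten B I$ --- a monoid with structure morphisms $p_A$ and $e:=p_I$ coming from the relative pullback taken in monoids (\cite[Example 4.4]{Bohm:Xmod_I}) --- and the distributive law is defined by transporting the multiplication of $A$ through the isomorphism $q$ of condition (b):
$$
\xymatrix@C=16pt{BY \ar[r]^-{i\,p_A} & A^2 \ar[r]^-m & A \ar[r]^-{q^{-1}} & YB.}
$$
Here $e$ is a monoid morphism because $p_I$ is, and condition (a') holds because the leg $s$ of condition (a) restricts to exhibit $e\in\mathcal S$.

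The first block of verification is that $x$ really is a distributive law and that (b'), (c') hold. I would first record the identity $s.q=e1$, obtained from $s.p_A=u.e$ (commutativity of the defining pullback square) together with $s.i=1$ and the unit laws of $B$; equivalently $e1.q^{-1}=s$. Condition (b') is then immediate: $e1.x=s.m.(i\,p_A)=m.(s.i)(s.p_A)=1e$. The four distributive-law axioms (compatibility of $x$ with the two multiplications and two units) I expect to follow from associativity and unitality of the multiplication of $A$, fed through the definition of $x$ and the multiplicativity of $q$; this is the longest calculation but is forced by the structure. For (c'), the isomorphism $q$ intertwines the cospan $s\colon A\to B\leftarrow I\colon u$ with $e1\colon YB\to B\leftarrow I\colon u$ (by $s.q=e1$), hence induces an isomorphism $YB\coten B I\to A\coten B I=Y$ identifying the canonical comparison with $f$; thus invertibility of $q$ yields invertibility of $f$.

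The quasi-inverse $\Psi$ runs the construction backwards. Given $(B,Y,e,x)$ it forms the twisted-product monoid on the object $A:=YB$, with multiplication $(m\,m).(1\,x\,1)$ and unit $u\,u$, which is associative and unital precisely because $x$ is a distributive law. I would set $s:=e1\colon A\to B$ and $i:=u1\colon B\to A$ (with $u\colon I\to Y$ the unit of $Y$); these are monoid morphisms --- $s$ by (b'), $i$ by construction --- and $s.i=1$ since $e.u=1$ as $e$ is a monoid morphism, so $(i,s)$ is a split epimorphism. Condition (a) is inherited from (a'), and condition (b) follows from (c'): invertibility of $f$ is exactly what makes the comparison $q$ for this split epimorphism an isomorphism, by the same identification of cospans as above. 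The natural isomorphism $\Psi\Phi\to\mathrm{Id}$ has component $q\colon YB\to A$, a monoid isomorphism commuting with $i$ and $s$; the natural isomorphism $\Phi\Psi\to\mathrm{Id}$ has component $f\colon Y\to YB\coten B I$ from (c'), compatible with $e$ and $x$.

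Finally, on morphisms $\Phi$ sends $(b,a)$ to $(b,y)$, where $y\colon Y\to Y'$ is induced by the universal property of the relative pullback $Y'=A'\coten{B'}I$ from the pair $(a.p_A,\,e)$; this is legitimate because $s'.a.p_A=b.s.p_A=b.u.e=u'.e$. Then $e'.y=e$ is automatic, and $x'.(b\,y)=(y\,b).x$ is checked by transporting along $q$ and $q'$ and using $i'.b=a.i$ and $a.p_A=p_{A'}.y$; the functor $\Psi$ sends $(b,y)$ to $(b,\,y\,b)$. The main obstacle I anticipate is purely computational but genuinely delicate: all of these checks must be carried out through the universal property of the $\mathcal S$-relative pullbacks rather than by any elementwise manipulation, since $\mathsf C$ need not have honest kernels or ordinary pullbacks. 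In particular, the passage between invertibility of $q$ and invertibility of $f$, and the verification of the distributive-law axioms, must be phrased entirely in terms of the comparison morphisms supplied by \cite[Definition 3.1]{Bohm:Xmod_I} and their uniqueness.
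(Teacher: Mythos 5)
Your proposal follows essentially the same route as the paper: the same pair of functors (sending $(i,s)$ to $Y=A\coten B I$ with $x=q^{-1}.m.(i\,p_A)$, and $(B,Y,e,x)$ to the twisted product $YB$ with $u1$ and $e1$), the same key identities $s.q=e1$ and $q.1u=p_A$, the same identification of $f$ with the induced map $q^{-1}\morcoten 1$ on relative pullbacks, and the same natural isomorphisms with components $q$ and $f$. The computations you defer (the distributive-law axioms, multiplicativity of the induced maps, well-definedness of $a\morcoten 1$ and $q\morcoten 1$) are exactly those the paper discharges by citing Lemmas 1.4--1.6 and Propositions 3.5 and 3.7 of Part I, so the argument is sound.
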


\begin{proof}
We prove the theorem by constructing mutually inverse equivalence
functors. The first one
$\mathsf{SplitEpiMon}_{\mathcal S}(\mathsf C) \to 
\mathsf{DistLaw}_{\mathcal S}(\mathsf C)$ sends 
$$
\xymatrix{
B \ar@<-2pt>@{ >->}[r]_-i \ar[d]_-b&
A \ar@<-2pt>@{->>}[l]_-s \ar[d]^-a \\
B' \ar@<-2pt>@{ >->}[r]_-{i'} &
A' \ar@<-2pt>@{->>}[l]_-{s'}}
\quad \raisebox{-18pt}{$\mapsto$} \quad
\xymatrix@C=12pt@R=14pt{
(A\coten B I, \ar[d]_-{a\morcoten 1}  & 
\hspace{-.5cm}B, \ar[d]^-b \hspace{-.5cm} &
A\coten B I \ar[r]^-{p_I} & I,  &
\hspace{-.5cm} 
B(A\coten B I) \ar[r]^-{ip_A} & A^2 \ar[r]^-m & A \ar[r]^-{q^{-1}} & 
(A\coten B I)B)   \\
(A'\coten {B'} I, &
\hspace{-.5cm}  B', \hspace{-.5cm}  &  
A' \coten {B'} I \ar[r]^-{p_I} & I, &
\hspace{-.5cm} 
B'(A'\coten {B'} I) \ar[r]^-{i'p_{A'}} & A^{\prime 2} \ar[r]^-{m'} & A' 
\ar[r]^-{q^{\prime -1}} & 
(A'\coten {B'} I)B'). }
$$
Let us see that the object map is meaningful. By construction $B$ is a monoid
and $\xymatrix@C=12pt{B \ar[r]^-b & B'}$ is a monoid morphism.
By \cite[Proposition 3.7~(1)]{Bohm:Xmod_I} $A\coten B I$ is a monoid and 
$\xymatrix@C=10pt{A\coten B I \ar[r]^-{p_I} & I}$ is a monoid morphism. 
By \cite[Lemma 1.5]{Bohm:Xmod_I} 
$\xymatrix@C=10pt{
B(A\coten B I) \ar[r]^-{ip_A} & 
A^2 \ar[r]^-m 
& A \ar[r]^-{q^{-1}} & 
(A\coten B I)B}$
is a distributive law.
Concerning property (a'),
$\xymatrix@C=10pt{I \ar@{=}[r] & I \ar@{=}[r] & I}\in \mathcal S$
by 
%\cite[Lemma 2.4~(1)]{Bohm:Xmod_I}
the unitality of $\mathcal S$; hence by \cite[Lemma 3.4~(2)]{Bohm:Xmod_I}
$\xymatrix@C=12pt{A\coten B I \ar@{=}[r] & A \coten B I \ar[r]^-{p_I} & I}
\in \mathcal S$.
By \cite[Lemma 2.4~(1)]{Bohm:Xmod_I} also
$\xymatrix@C=10pt{
B\ar@{=}[r] & B \ar@{=}[r] & B}$ belongs to $ \mathcal S$. Condition (b')
holds since commutativity of the first diagram of
\begin{equation}\label{eq:s.q}
\xymatrix@C=15pt{
(A \coten B I) B \ar[r]_-{p_Ai} \ar[d]_-{p_I 1} \ar@/^1.1pc/[rr]^-q &
A^2 \ar[r]_-m \ar[d]^-{ss} & 
A \ar[d]^-s \\
B \ar[r]^-{u1} \ar@{=}@/_1.1pc/[rr] &
B^2\ar[r]^-m &
B}\qquad
\xymatrix{
B (A \coten B I) \ar[r]^-{ip_A}\ar[d]_-{1p_I} &
A^2 \ar[r]^-m \ar[d]^-{ss} &
A \ar[r]^-{q^{-1}} \ar[d]^-s &
 (A \coten B I) B\ar[d]^-{p_I 1} \\
B \ar[r]^-{1u} \ar@{=}@/_1.1pc/[rr] &
B^2\ar[r]^-m &
B \ar@{=}[r] &
B}
\vspace{.3cm}
\end{equation}
implies the commutativity of the second diagram. For condition (c') observe
that by the unitality of the monoid morphism $i$ the equality 
$q.1u=
p_A$ holds, equivalently, $q^{-1}.p_A=1u$. With this
identity in mind we see that the morphism $f$ of condition (c') is equal to
$q^{-1}\morcoten 1$ in the first diagram of
$$
\xymatrix{
A \coten B I \ar@/^1.2pc/[rrd]^-{p_I} \ar[dd]_-{p_A}
\ar@{-->}[rd]^-{q^{-1}\morcoten 1} \\
& (A \coten B I)B \coten B I \ar[r]^-{p_I} \ar[d]_-{p_{(A \diagcoten_B I)B}} &
I \ar[d]^-u \\
A \ar[r]_-{q^{-1}} & 
(A \coten B I)B\ar[r]_-{p_I1} &
B} \qquad
\xymatrix{
(A \coten B I)B \coten B I \ar@/^1.2pc/[rrd]^-{p_I} 
\ar[dd]_-{p_{(A \diagcoten_B I)B}} \ar@{-->}[rd]^-{q \morcoten 1}\\
& A \coten B I \ar[r]^-{p_I} \ar[d]_-{p_A} &
I \ar[d]^-u \\
(A \coten B I)B \ar[r]_-q &
A \ar[r]_-s & B.}
$$
Then by \cite[Proposition 3.5~(2)]{Bohm:Xmod_I} it is invertible with
the inverse $q\morcoten 1$ in the second diagram. Both morphisms
$q^{-1}\morcoten 1$ and $q\morcoten 1$ are well-defined by the commutativity
of the first diagram of \eqref{eq:s.q}; see 
\cite[Proposition 3.5~(1)]{Bohm:Xmod_I}. 
This proves that the object map of our candidate functor is meaningful.

Concerning the morphism map, $a\morcoten 1$ is a well-defined morphism in
$\mathsf C$ by the assumption that $b.s=s'.a$ (see 
\cite[Proposition 3.5~(1)]{Bohm:Xmod_I}) and it is a monoid morphism by
\cite[Proposition 3.7~(2)]{Bohm:Xmod_I}. Condition $p_I.(a\morcoten
1)=p_I$ holds by construction and the other equality holds since the
commutativity of the first diagram of 
\begin{equation}\label{eq:q_nat}
\xymatrix@C=15pt{
(A \coten B I) B \ar[r]_-{p_Ai} \ar[d]_-{(a\diagcoten 1)b} 
\ar@/^1.1pc/[rr]^-q &
A^2 \ar[r]_-m \ar[d]^-{aa} & 
A \ar[d]^-a \\
(A' \coten {B'} I) B' \ar[r]^-{p_{A'}i'} \ar@/_1.1pc/[rr]_-{q'} &
A^{\prime 2} \ar[r]^-{m'} & 
A'}\qquad
\xymatrix@C=15pt{
B(A \coten B I)  \ar[r]^-{i p_A} \ar[d]_-{b(a\diagcoten 1)} &
A^2 \ar[r]^-m \ar[d]^-{aa} & 
A \ar[d]^-a \ar[r]^-{q^{-1}} &
(A \coten B I) B  \ar[d]^-{(a\diagcoten 1)b} \\
B'(A' \coten {B'} I)  \ar[r]_-{i'p_{A'}}  &
A^{\prime 2} \ar[r]_-{m'} & 
A' \ar[r]_-{q^{\prime -1}} &
(A' \coten {B'} I) B'}
\end{equation}
implies the commutativity of the second diagram.

In the opposite direction
$\mathsf{DistLaw}_{\mathcal S}(\mathsf C) \to
\mathsf{SplitEpiMon}_{\mathcal S}(\mathsf C)$
we propose a functor sending
$$
\xymatrix@C=12pt@R=20pt{
(Y, \ar[d]_-y  & 
\hspace{-.5cm}B, \ar[d]^-b \hspace{-.5cm} &
Y \ar[r]^-e & I, &
\hspace{-.5cm} 
BY \ar[r]^-x & YB)   \\
(Y', &
\hspace{-.5cm}  B', \hspace{-.5cm}  &  
Y' \ar[r]^-{e'} & I, &
\hspace{-.5cm} 
B'Y' \ar[r]^-{x'} & Y'B')}
\quad \raisebox{-18pt}{$\mapsto$} \quad
\xymatrix{
B \ar@<-2pt>@{ >->}[r]_-{u1} \ar[d]_-b&
YB \ar@<-2pt>@{->>}[l]_-{e1} \ar[d]^-{yb} \\
B' \ar@<-2pt>@{ >->}[r]_-{u'1} &
A. \ar@<-2pt>@{->>}[l]_-{e'1}}
$$
Here $YB$ is considered with the monoid structure induced by the distributive
law $x$, see \cite[Lemma 1.4]{Bohm:Xmod_I}. Then 
$\xymatrix@C=12pt{B \ar[r]^-{u1} & YB}$ is a monoid morphism by \cite[Lemma
1.4]{Bohm:Xmod_I} again. By \cite[Lemma 1.6]{Bohm:Xmod_I} condition (b')
implies that 
$\xymatrix@C=12pt{YB \ar[r]^-{e1} & B}$ is a monoid morphism too.
The  rows are split epimorphisms (of monoids) by the unitality of the monoid
morphism $e$.
By (a') and the multiplicativity of $\mathcal S$, 
$\xymatrix@C=15pt{
YB \ar@{=}[r] &
YB \ar[r]^-{e1} &
B}\in \mathcal S$ 
so that condition (a) holds. For condition (b) note that the commutativity of 
$$
\xymatrix{
YB \ar@{=}[rrr]\ar[dd]_-{f1} \ar[rd]^-{1u1} &&&
YB \ar@{=}[dd]\\
& YB^2 \ar@{=}[r] \ar[d]^-{11u1} &
YB^2 \ar[d]^-{1u11} \ar[ru]^-{1m} \\
(YB \coten B I)B\ar[r]_-{p_{YB}u1} 
&
(YB)^2 \ar[r]_-{1x1} &
Y^2B^2 \ar[r]_-{mm} &
YB}
$$
implies that the bottom row is the inverse of the isomorphism $f1$ in the left column
hence it is invertible. This proves that the object map is well defined. 

Concerning the morphism map, it follows by the assumption $yb.x=x'.by$ that 
$yb$ is a monoid morphism, see \cite[Lemma 1.6]{Bohm:Xmod_I}. The monoid
morphisms $(b,yb)$ are compatible with the monomorphisms 
$\xymatrix@C=15pt{B \ar[r]^-{u1} & YB}$ and 
$\xymatrix@C=15pt{B' \ar[r]^-{u'1} & Y'B'}$ by the unitality of $y$ and they
are compatible with the epimorphisms 
$\xymatrix@C=15pt{YB \ar[r]^-{e1} & B}$ and 
$\xymatrix@C=15pt{Y'B' \ar[r]^-{e'1} & B'}$ by the assumption that $e'.y=e$.

So we have well-defined functors in both directions, it remains to see that
their composites are naturally isomorphic to the identity functors. 
The composite 
$$
\mathsf{SplitEpiMon}_{\mathcal S}(\mathsf C) \to
\mathsf{DistLaw}_{\mathcal S}(\mathsf C) \to
\mathsf{SplitEpiMon}_{\mathcal S}(\mathsf C)
$$
acts as
$$
\xymatrix{
B \ar@<-2pt>@{ >->}[r]_-i \ar[d]_-b&
A \ar@<-2pt>@{->>}[l]_-s \ar[d]^-a \\
B' \ar@<-2pt>@{ >->}[r]_-{i'} &
A \ar@<-2pt>@{->>}[l]_-{s'}}
\quad \raisebox{-18pt}{$\mapsto$} \quad
\xymatrix@R=16pt{
B \ar@<-2pt>@{ >->}[r]_-{u1} \ar[d]_-b&
(A\coten B I)B \ar@<-2pt>@{->>}[l]_-{p_I1} \ar[d]^-a \\
B' \ar@<-2pt>@{ >->}[r]_-{u'1} &
(A'\coten {B'} I)B' \ar@<-2pt>@{->>}[l]_-{p_I1}}
$$
We claim that a natural isomorphism from this to the identity functor has the
components  
$(\xymatrix@C=10pt{B \ar@{=}[r] & B},$ 
$\xymatrix@C=10pt{(A\coten B I)B\ar[r]^-q & A})$.
Since $p_A$ is a monoid morphism by \cite[Proposition 3.7~(1)]{Bohm:Xmod_I},
so is $q$ by \cite[Lemma 1.5]{Bohm:Xmod_I}.
The stated pair $(1,q)$ is a morphism in $\mathsf{SplitEpiMon}_{\mathcal
S}(\mathsf C)$ by the first diagram of \eqref{eq:s.q} and by the fact that the
unitality of $p_A$ implies $q.1u=i$. Naturality with respect to any morphism 
$(\xymatrix@C=12pt{B \ar[r]^-b & B'},\xymatrix@C=12pt{A \ar[r]^-a & A'})$ in
$\mathsf{SplitEpiMon}_{\mathcal S}(\mathsf C)$ follows by the commutativity of
the first diagram of \eqref{eq:q_nat}.

Composing our functors in the opposite order
$$
\mathsf{DistLaw}_{\mathcal S}(\mathsf C) \to
\mathsf{SplitEpiMon}_{\mathcal S}(\mathsf C) \to
\mathsf{DistLaw}_{\mathcal S}(\mathsf C)
$$
we obtain the functor sending
$$
\xymatrix@C=12pt@R=20pt{
(Y, \ar[d]_-y & 
\hspace{-4cm} B, \ar[d]^-b \hspace{-4cm} &
Y \ar[r]^-e & I, &
\hspace{-.5cm} 
BY \ar[r]^-x & YB)   \\
(Y', &
\hspace{-4cm}  B', \hspace{-4cm}  &  
Y' \ar[r]^-{e'} & I, &
\hspace{-.5cm} 
B'Y' \ar[r]^-{x'} & Y'B')}
$$
to
$$
\xymatrix@C=7pt@R=20pt{
(YB \coten B I, \ar[d]_-{yb\diagcoten 1}  & 
\hspace{-2cm}B, \ar[d]^-b \hspace{-2cm} &
YB\coten B I \ar[r]^-{p_I} & I, &
\hspace{-.5cm} 
B(YB \coten B I) \ar[r]^-{u1p_{YB}} & 
(YB)^2 \ar[r]^-{1x1} &
Y^2B^2 \ar[r]^-{mm} &
YB \ar[r]^-{f1} &
(YB \coten B I)B)   \\
(Y'B' \coten {B'} I,  & 
\hspace{-2cm}B', \hspace{-2cm} &
Y'B'\coten {B'} I \ar[r]^-{\raisebox{10pt}{${}_{p_I}$}} & I, &
\hspace{-.5cm} 
B'(Y'B' \coten {B'} I) \ar[r]^-{\raisebox{10pt}{${}_{u'1p_{Y'B'}}$}} & 
(Y'B')^2 \ar[r]^-{\raisebox{10pt}{${}_{1x'1}$}} &
Y^{\prime 2} B^{\prime 2} \ar[r]^-{\raisebox{10pt}{${}_{m'm'}$}} &
Y'B' \ar[r]^-{\raisebox{10pt}{${}_{f'1}$}} &
(Y'B' \coten {B'} I)B').}
$$
We claim that a natural isomorphism from this to the identity functor has the
invertible components 
$(\xymatrix@C=10pt{B \ar@{=}[r] & B},
\xymatrix@C=10pt{Y \ar[r]^-f & YB\coten B I})$.
By construction $f$ is a monoid morphism, see \cite[Proposition
  3.7~(2)]{Bohm:Xmod_I}. 
The compatibility of the monoid morphisms $(1,f)$ with 
$\xymatrix@C=12pt{Y \ar[r]^-e & I}$ and
$\xymatrix@C=12pt{ YB \coten B I\ar[r]^-{p_I} & I}$
holds by the definition of $f$ and the compatibility with the distributive
laws $BY \to YB$
%$\xymatrix@C=12pt{BY \ar[r]^-x & YB}$
and $B (YB \coten B I) \to  (YB \coten B I) B$
%$\xymatrix@C=12pt{
%B (YB \coten B I)\ar[r]^-{1p_{YB}} & 
%BYB \ar[r]^-{x1} &
%YB^2 \ar[r]^-{1m} &
%YB \ar[r]^-{f1} &
%(YB \coten B I)B}$
holds by the commutativity of
$$
\xymatrix{
BY \ar@{=}[r] \ar[d]_-{1f} &
BY \ar[rr]^-x \ar[d]^-{11u} && 
YB \ar[ld]_-{11u} \ar@{=}[d] \ar@/^1.2pc/[rd]^-{f1} \\
B (YB \coten B I)\ar[r]_-{1p_{YB}} & 
BYB \ar[r]_-{x1} &
YB^2 \ar[r]_-{1m} &
YB \ar[r]_-{f1} &
(YB \coten B I)B.}
$$
Finally, the naturality with respect to an arbitrary morphism 
$(\xymatrix@C=12pt{B \ar[r]^-b & B'},\xymatrix@C=12pt{Y \ar[r]^-y & Y'})$
in $\mathsf{DistLaw}_{\mathcal S}(\mathsf C)$ follows by the commutativity of
the diagrams 
$$
\xymatrix{
YB \coten B I \ar[rr]^-{yb\diagcoten 1} \ar[rd]^-{p_{YB}} &&
Y'B' \coten {B'} I \ar[d]^-{p_{Y'B'}} \\
Y \ar[u]^-f \ar[r]^-{1u} \ar[d]_-y &
YB \ar[r]^-{yb} &
Y'B' \\
Y'\ar[rru]^-{1u'} \ar[rr]_-{f'} &&
Y'B' \coten {B'} I \ar[u]_-{p_{Y'B'}}}\qquad
\xymatrix{
YB \coten B I \ar[r]^-{yb\diagcoten 1} \ar[rd]^-{p_I} &
Y'B' \coten {B'} I \ar[d]^-{p_I} \\
Y \ar[u]^-f \ar[r]^-e \ar[d]_-y &
I \\
Y'\ar[ru]^-{e'} \ar[r]_-{f'} &
Y'B' \coten {B'} I \ar[u]_-{p_I}}
$$
using again that 
$\xymatrix@C=15pt{YB & \ar[l]_-{p_{YB}} YB \coten B I \ar[r]^-{p_I} & I}$
are joint monomorphisms in $\mathsf C$.
\end{proof}

\begin{example} \label{ex:SplitEpi_groupoid}
For any fixed set $X$, the category $\mathsf C$ of spans over $X$ is monoidal
via the pullback over $X$. A monoid in $\mathsf C$ is a small category with
the object set $X$ and a monoid morphism is a functor acting on the objects as
the identity map.
Moreover, $\mathsf C$ has all pullbacks (computed in the underlying category
of sets). 
So taking as $\mathcal S$ the class of all spans in $\mathsf C$, from Theorem
\ref{thm:SplitEpi_vs_DLaw} we obtain the equivalence of the following
categories. (Throughout $s$ denotes the source map in any category and $t$
denotes the target map.)
\begin{itemize}
\item[{$\mathsf{Split}$}]\hspace{-.3cm} $\mathsf{EpiMon}(\mathsf C)$ whose \\
\underline{objects} are pairs of identity-on-objects functors
$\xymatrix@C=15pt{
B \ar@<-2pt>@{ >->}[r]_-\iota &
A \ar@<-2pt>@{->>}[l]_-\sigma}$ 
between categories of the common object set $X$
such that the composite $\sigma\iota$ is the identity functor, and the map
\begin{equation} \label{eq:q_groupoid}
q:(A\coten B X)\coten X B =\{(a,x,b) \vert \sigma(a)=1_x,\ x=t(b) \}
\to A\qquad
(a,x,b) \mapsto a.\iota(b)
\end{equation}
is invertible. (The morphism of \eqref{eq:q_groupoid} is invertible e.g. if $B$ is a groupoid; then its inverse takes a morphism $a$ to $(a.\iota(\sigma(a)^{-1}), t(a),\sigma(a))$.) \\
\underline{morphisms} are pairs of identity-on-objects functors
$(\xymatrix@C=12pt{A \ar[r]^-\alpha & A'},\xymatrix@C=12pt{B \ar[r]^-\beta & B'})$
for which $\alpha\iota=\iota'\beta$ and $\beta\sigma=\sigma'\alpha$.
\item[{$\mathsf{Dist}\hspace{.08cm}$}]\hspace{-.38cm} $\mathsf{Law}(\mathsf C)$ whose \\ 
\underline{objects} consist of categories $B$ and $Y$ with the common object set $X$ such that $Y$ has no morphisms between non-equal objects (that is, its source map $s$ and target map $t$ coincide); and an {\em action} 
$\xymatrix@C=12pt{
B \coten X Y =\{(b,y) \vert s(b)=t(y) \}
\ar[r]^-\triangleright & Y}$
in the sense of \cite[Definition 1.1]{BrownIcen}; meaning the following axioms
for all morphisms $b,b'$ in $B$ and $y,y'$ in $Y$ for which $s(b')=t(b)$ and
$s(b)=t(y)=s(y)=t(y')=s(y')$. 
\begin{itemize}
\item[{(i)}] $t(b\triangleright y)=t(b)$
\item[{(ii)}] $b\triangleright (y.y')=(b\triangleright y).(b\triangleright y')$
  and $b\triangleright 1_{s(b)}=1_{t(b)}$
\item[{(iii)}] $(b'.b)\triangleright y = b'\triangleright (b\triangleright y)$
  and $1_{t(y)}\triangleright y =y$.
\end{itemize}
\underline{morphisms} are pairs of identity-on-objects functors
$(\xymatrix@C=12pt{Y \ar[r]^-\nu & Y'},\xymatrix@C=12pt{B \ar[r]^-\beta & B'})$
for which $\nu(b\triangleright y)=\beta(b)\triangleright \nu(y)$ for all
morphisms $b$ in $B$ and $y$ in $Y$ for which $s(b)=t(y)$.
\end{itemize}

Only the above description of an object in 
$\mathsf{DistLaw}(\mathsf C)$ requires some explanation.

The monoidal unit of $\mathsf C$ is the trivial span 
$\xymatrix@C=12pt{X & \ar@{=}[l] X \ar@{=}[r] & X}$. Its trivial monoid
structure yields the discrete category $\mathsf D (X)$. An identity-on-objects
functor $\xymatrix@C=12pt{Y \ar[r]^-e  & \mathsf D (X)}$ as in Theorem
\ref{thm:SplitEpi_vs_DLaw} exists if and only if the source and target maps of
$Y$ coincide. Then there is precisely one such functor sending any morphism to
the identity morphism on its equal source and target objects. 
For this functor $e$, precisely those maps 
$\xymatrix@C=12pt{B\coten X Y \ar[r]^-x & Y \coten X B}$ satisfy 
$(e\morcoten 1).x=1\morcoten e$ which are of the form $(b,y)\mapsto
(b\triangleright y,b)$ in terms of some map $\triangleright$ obeying condition
(i). It is straightforward to see that then $x$ is a distributive law if and
only if conditions (ii) and (iii) hold.  

The morphism $f$ of Theorem \ref{thm:SplitEpi_vs_DLaw}~(c') is invertible
because
\begin{equation} \label{eq:hn_groupoid}
\xymatrix{
Y \coten X C \ar[r]^-{e\diagcoten 1} \ar[d]_-{1\diagcoten g} &
C \ar[d]^-g \\
Y \coten X B \ar[r]_-{e\diagcoten 1} &
B}
\end{equation}
is clearly a pullback of $X$-spans for any span morphism $g$. 
\end{example}

\begin{example} \label{ex:CoMon_SplitEpi}
Let $\mathsf M$ be a symmetric monoidal category in which equalizers exist and are preserved by taking the monoidal product with any object. 

Take $\mathsf C$ to be the category of comonoids in $\mathsf M$ with the
monoidal admissible class $\mathcal S$ in \cite[Example 2.3]{Bohm:Xmod_I} of spans in $\mathsf C$. 
Thanks to the symmetry of $\mathsf M$, its monoidal structure is inherited by
$\mathsf C$. A monoid $A$ in $\mathsf C$ is known as a {\em bimonoid} in
$\mathsf M$. Recall that the monoidal structure of $\mathsf M$ is lifted to
the category of (left or right) modules over the monoid $A$ in $\mathsf M$. A
monoid (respectively, a comonoid) in the category of $A$-modules is known as
an {\em $A$-module monoid} (respectively, {\em $A$-module comonoid}).

Recall from \cite[Example 3.3]{Bohm:Xmod_I} that for a cospan
$\xymatrix@C=12pt{A \ar[r]^-f & B & \ar[l]_-g C}$ 
of comonoids whose legs are in $\mathcal S$, the $\mathcal S$-relative pullback is given by the so-called {\em cotensor product}, defined as the equalizer
\begin{equation}\label{eq:cotensor}
\xymatrix{
A \coten B C \ar[r]^-j &
AC \ar@<2pt>[rr]^-{1f1.\delta 1} \ar@<-2pt>[rr]_-{1g1.1\delta } &&
ABC}
\end{equation}
in $\mathsf M$ (where $\delta$ denotes both comultiplications of the comonoids $A$ and $C$.)

Below we describe the equivalent categories of Theorem \ref{thm:SplitEpi_vs_DLaw} in this context. 
\begin{itemize}
\item[{$\mathsf{Split}$}]\hspace{-.3cm} $\mathsf{EpiMon}_{\mathcal S}(\mathsf
  C)$ whose \\
\underline{objects} are split epimorphisms 
$\xymatrix@C=15pt{
B \ar@<-2pt>@{ >->}[r]_-i &
A \ar@<-2pt>@{->>}[l]_-s}$ 
of bimonoids in $\mathsf M$ subject to the following conditions.
\begin{itemize}
\item[{(a)}] The comultiplication $\delta$ of $A$ satisfies
  $c.s1.\delta=1s.\delta$. 
\item[{(b)}] In terms of the morphism $j$ of \eqref{eq:cotensor}, 
$q:=\xymatrix@C=15pt{
(A \coten B I)B \ar[r]^-{ji} & A^2 \ar[r]^-m &A}$
is invertible.
\end{itemize}
\underline{morphisms} are pairs of bimonoid morphisms which are compatible with
the epimorphisms $s$ as well as their sections $i$.
\item[{$\mathsf{Dist}\hspace{.08cm}$}]\hspace{-.38cm} $\mathsf{Law}_{\mathcal
  S}(\mathsf C)$ whose \\ 
\underline{objects} consist of a cocommutative bimonoid $B$ and a bimonoid
$Y$ in $\mathsf M$, together with a left $B$-action on $Y$ which makes $Y$
both a left $B$-module monoid and a left $B$-module comonoid. \\
\underline{morphisms} are pairs of bimonoid morphisms 
$(\xymatrix@C=12pt{B \ar[r]^-b &B'},
\xymatrix@C=12pt{Y \ar[r]^-y &Y'})$
which are compatible with the actions 
$\xymatrix@C=12pt{BY \ar[r]^-l &Y}$ and
$\xymatrix@C=12pt{B'Y' \ar[r]^-{l'} &Y'}$ in the sense that $l'.by=y.l$.
\end{itemize}

This concise description of $\mathsf{DistLaw}_{\mathcal S}(\mathsf C)$
requires a proof. Note that the monoidal unit $I$ is now a terminal object in
$\mathsf C$; the unique morphism $Y \to I$ is the counit $\varepsilon$. It
obviously satisfies 
$\xymatrix@C=12pt{
Y \ar@{=}[r] & Y \ar[r]^-\varepsilon & I}\in \mathcal S$.
The other condition
$\xymatrix@C=12pt{
B \ar@{=}[r] & B \ar@{=}[r] & B}\in \mathcal S$
in (a') of Theorem \ref{thm:SplitEpi_vs_DLaw} reduces to the requirement that
the comonoid $B$ is cocommutative. 

Next we establish a bijective correspondence between distributive laws $BY
\to YB$ satisfying property (b') of Theorem \ref{thm:SplitEpi_vs_DLaw}
and left actions $BY\to Y$ as in the description above. Starting with a
distributive law 
$\xymatrix@C=12pt{BY \ar[r]^-x & YB}$, put $l:=1\varepsilon.x$.
It is a unital action by the left unitality of $x$ and it is associative by
the left multiplicativity of $x$: 
$$
\xymatrix@R=64pt{
Y \ar@{=}[r] \ar[d]_-{u1} &
Y \ar[d]^-{1u} \ar@{=}@/^1.2pc/[rd] \\
BY \ar[r]^-x \ar@/_1.1pc/[rr]_-l &
YB \ar[r]^-{1 \varepsilon} &
Y}\qquad
\xymatrix{
B^2Y \ar[r]_-{1x} \ar@/^1.1pc/[rr]^-{1l} \ar[dd]_-{m1} &
BYB \ar[r]_-{11\varepsilon} \ar[d]^-{x1} &
BY \ar[d]_-x \ar@/^1.1pc/[dd]^-l \\
& YB^2 \ar[r]^-{11\varepsilon} \ar[d]^-{1m} &
YB \ar[d]_-{1\varepsilon} \\
BY \ar[r]^-x \ar@/_1.1pc/[rr]_-l &
YB \ar[r]^-{1\varepsilon} &
Y}
$$
By the right unitality of $x$ the unit 
$\xymatrix@C=12pt{I \ar[r]^-u & Y}$ is a morphism of $B$-modules and 
by the right multiplicativity of $x$ the multiplication 
$\xymatrix@C=12pt{Y^2 \ar[r]^-m & Y}$ is a morphism of $B$-modules:
$$
\xymatrix@R=83pt{
B \ar[r]^-{1u} \ar@{=}[d] &
BY \ar[d]_-x \ar@/^1.2pc/[dd]^-l \\
B \ar[r]^-{u1} \ar[d]_-\varepsilon &
YB \ar[d]_-{1\varepsilon} \\
I \ar[r]_-u &
Y}\qquad
\xymatrix@C=20pt@R=15pt{
BY^2 \ar@{=}[rr] \ar[d]_-{\delta 11} &&
BY^2 \ar[rr]^-{1m} \ar[ddd]^-{x1} &&
BY \ar[ddddd]_-x \ar@/^2pc/[dddddd]^-l \\
B^2Y^2 \ar[r]^-{11\delta 1} \ar@{=}[d] &
B^2Y^3 \ar[ld]^-{111\varepsilon 1} \ar[dd]^-{1c11}\\
B^2Y^2 \ar[d]_-{1c1} \\
(BY)^2 \ar[d]^-{x11} \ar@/_2pc/[ddd]_-{ll} &
(BY)^2Y \ar[l]_-{111\varepsilon 1} \ar[d]^-{xx1} \ar@{}[ld]|-{\mathrm{(b')}} &
YBY \ar[d]_-{\delta\delta 1} \ar@/^1.2pc/@{=}[rd] \\
YB^2Y \ar[d]^-{11x} &
(YB)^2Y \ar[l]^-{11\varepsilon 11} &
Y^2B^2Y \ar[l]^-{1c11} \ar[r]_-{1\varepsilon\varepsilon 11} &
YBY \ar[d]^-{1x} \\
(YB)^2 \ar[d]^-{1\varepsilon 1 \varepsilon} &&&
Y^2B \ar[r]^-{m1} \ar[d]^-{11\varepsilon} &
YB \ar[d]_-{1\varepsilon} \\
Y^2 \ar@{=}[rrr] &&&
Y^2 \ar[r]_-m &
Y}
$$
(note that here we also used the comultiplicativity of $x$).
The condition that the counit 
$\xymatrix@C=12pt{Y \ar[r]^-\varepsilon & I}$ 
is a morphism of $B$-modules coincides with the counitality of $l$ and also
with the counitality of $x$. The comultiplication 
$\xymatrix@C=12pt{Y \ar[r]^-\delta & Y^2}$ 
is a morphism of $B$-modules, equivalently, $l$ is comultiplicative by the
comultiplicativity of $x$:
$$
\xymatrix@R=15pt{
BY \ar[r]^-{1\delta} \ar[d]^-x \ar@/_2pc/[dd]_-l &
BY^2 \ar[r]^-{\delta 1} &
B^2Y^2 \ar[r]^-{1c1} &
(BY)^2 \ar[d]_-{xx} \ar@/^2pc/[dd]^-{ll} \\
YB \ar[r]^-{\delta 1} \ar[d]^-{1\varepsilon} &
Y^2B \ar[r]^-{11\delta} \ar[d]^-{11\varepsilon} &
Y^2B^2 \ar[r]^-{1c1} \ar[d]^-{11\varepsilon\varepsilon} &
(YB)^2 \ar[d]_-{1\varepsilon 1 \varepsilon} \\
Y \ar[r]_-\delta &
Y^2 \ar@{=}[r] &
Y^2 \ar@{=}[r] &
Y^2}
$$

Conversely, in terms of an action $l$ as above, put 
$x:=\xymatrix@C=12pt{BY \ar[r]^-{\delta 1} &
B^2Y \ar[r]^-{1c} &
BYB \ar[r]^-{l1} &
YB.}$ 
It clearly satisfies (b') by the counitality of $l$ hence it is counital. 
It is comultiplicative by  the comultiplicativity of $l$:
$$
\xymatrix@R=15pt{
BY \ar[rr]_-{\delta 1} \ar[d]_-{\delta\delta} \ar@/^1.5pc/[rrrr]^-x &&
B^2Y \ar[r]_-{1c} \ar[d]^-{\delta\delta\delta} &
BYB \ar[r]_-{l1} \ar[d]^-{\delta\delta\delta} &
YB \ar[dd]^-{\delta\delta} \\
B^2Y^2 \ar[r]_-{\delta\delta 11} \ar[dd]_-{1c1} &
B^4Y^2 \ar[r]_-{1c111} \ar[dd]^-{11c_{B^2,Y}1} &
B^4Y^2 \ar[r]_-{11c_{B^2,Y^2}} &
B^2Y^2B^2 \ar[d]^-{1c111} \\
&&& (BY)^2B^2 \ar[r]^-{ll11} \ar[d]^-{11c_{BY,B}1} &
Y^2B^2 \ar[d]^-{1c1} \\
(BY)^2 \ar[r]^-{\delta 1\delta 1} \ar@/_1.5pc/[rrrr]_-{xx} &
(B^2Y)^2 \ar[rr]^-{1c1c} &&
(BYB)^2 \ar[r]^-{l1l1} &
(YB)^2}
$$
where the top-left region commutes by the coassociativity and cocommutativity
of the comonoid $B$. This morphism $x$ is a distributive law. Indeed, the left
unitality and the left multiplicativity follow by the unitality and the
associativity of the action $l$, respectively: 
$$
\xymatrix@C=15pt@R=146pt{
Y \ar[r]_-{u1} \ar[d]_-{u1} \ar@/^1.2pc/[rr]^-{1u}&
BY \ar[r]_-c \ar[d]^-{u11} &
YB \ar[d]_-{u11} \ar@/^1.2pc/@{=}[rd] \\
BY \ar[r]^-{\delta 1} \ar@/_1.2pc/[rrr]_-x &
B^2Y \ar[r]^-{1c} &
BYB \ar[r]^-{l1} &
YB}\qquad
\xymatrix{
B^2Y \ar[r]_-{1\delta 1} \ar[dddd]_-{m1} \ar@/^2pc/[rrr]^-{1x} &
B^3Y \ar[r]_-{11c} \ar[d]^-{\delta 111} &
B^2YB \ar[r]_-{1l1} &
BYB \ar[d]_-{\delta 11} \ar@/^2pc/[ddd]^-{x1} \\
& B^4Y \ar[r]^-{111c} \ar[d]^-{1c11} &
B^3YB \ar[r]^-{11l1} \ar[d]^-{1c_{B,BY}1} &
B^2YB \ar[d]_-{1c1} \\
& B^4Y \ar[r]^-{11c_{B^2,Y}} \ar[d]^-{m111} &
B^2YB^2 \ar[r]^-{1l11} \ar[d]^-{m11} &
BYB^2 \ar[d]_-{l11} \\
& B^3Y \ar[r]^-{1c_{B^2,Y}} \ar[d]^-{1m1} &
BYB^2 \ar[r]^-{l11} \ar[d]^-{11m} &
YB^2 \ar[d]^-{1m} \\
BY \ar[r]^-{\delta 1} \ar@/_2pc/[rrr]_-x &
B^2Y \ar[r]^-{1c} &
BYB \ar[r]^-{l1} &
YB}
$$
and the right unitality and the right multiplicativity of $x$ follow using that
the unit and the multiplication of $Y$ are $B$-module morphisms:
$$
\xymatrix@R=27pt@C=20pt{
B \ar[r]^-{1u} \ar[d]^-\delta \ar@/_1.5pc/@{=}[ddd] &
BY \ar[d]_-{\delta 1} \ar@/^1.5pc/[ddd]^(.4)x \\
B^2 \ar[r]^-{11u} \ar@{=}[d] &
B^2Y \ar[d]_-{1c} \\
B^2 \ar[r]^-{1u1} \ar[d]^-{\varepsilon 1} &
BYB \ar[d]_-{l1} \\
B \ar[r]_-{u1} &
YB}
\xymatrix@R=15pt@C=20pt{
BY^2 \ar[rrrr]^-{1m} \ar[rd]^-{\delta 11} \ar[dd]^-{\delta 11} 
\ar@/_2pc/[dddd]_(.6){x1} &&&&
BY \ar[d]_-{\delta 1} \ar@/^2pc/[dddd]^-x \\
& 
B^2Y^2 \ar[r]^-{1c_{B,Y^2}} \ar[d]^-{\delta 111} &
BY^2B \ar[d]^-{\delta 111} \ar[rrd]^-{1m1} &&
B^2Y \ar[d]_-{1c} \\
B^2Y^2 \ar[r]^-{1\delta 11} \ar[d]^-{1c1} &
B^3Y^2 \ar[r]^-{11c_{B,Y^2}} \ar[d]^-{1c_{B^2,Y}1}  &
B^2Y^2B \ar[d]^-{1c11} &&
BYB \ar[dd]_-{l1} \\
(BY)^2 \ar[r]^-{11\delta 1} \ar[d]^-{l11} &
BYB^2Y \ar[r]^-{111c} \ar[d]^-{l111} &
(BY)^2 B \ar[d]^-{l111} \\
YBY \ar[r]^-{1\delta 1} \ar@/_1.2pc/[rrr]_-{1x} &
YB^2Y \ar[r]^-{11c} &
(YB)^2 \ar[r]^-{1l1} &
Y^2B \ar[r]_-{m1} &
YB}
$$

The above correspondences between  $l$ and $x$ are bijective by the
commutativity of 
$$
\xymatrix@R=21pt{
& B^2Y \ar[rr]^-{1c} &&
BYB \ar[dd]^-{x1} \\
BY \ar@/^1.2pc/[ru]^-{\delta 1} \ar[r]^-{\delta\delta} \ar[d]_-x &
B^2Y^2  \ar[r]^-{1c1} \ar[u]_-{111 \varepsilon}&
(BY)^2 \ar[ru]^-{111 \varepsilon} \ar[d]_-{xx} \\
YB \ar[r]^-{\delta\delta} \ar@/_1.2pc/@{=}[rd] &
Y^2B^2 \ar[r]^-{1c1} \ar[d]^-{1\varepsilon \varepsilon 1} &
(YB)^2 \ar[rd]^-{1\varepsilon \varepsilon 1} &
YB^2 \ar[d]^-{1\varepsilon 1}\\
& YB \ar@{=}[rr] &&
YB} \qquad
\xymatrix{
BY \ar[d]_-{\delta 1} \ar@/^1.2pc/@{=}[rd] \\
B^2Y \ar[d]_-{1c} \ar[r]_-{1\varepsilon 1}  &
BY \ar@{=}[d] \\
BYB \ar[r]^-{11\varepsilon } \ar[d]_-{l1} &
BY \ar[d]^-l \\
YB \ar[r]_-{1\varepsilon } &
Y}
$$
for a comultiplicative morphism $x$ satisfying (b') and any morphism $l$.

Finally, we show that the morphism 
$\xymatrix@C=12pt{Y \ar[r]^-f & YB \coten B I}$ 
in part (c') of Theorem \ref{thm:SplitEpi_vs_DLaw} is invertible without any
further assumption; its inverse is constructed as  
$f^{-1}:= \xymatrix@C=15pt{
YB \coten B I \ar[r]^-{p_{YB}} & YB \ar[r]^-{1\varepsilon} & Y}$.
In order to see that it is the inverse, indeed, recall that by 
\cite[Example 3.3]{Bohm:Xmod_I} the morphism $p_{YB}$ is the equalizer of 
$\xymatrix@C=15pt{YB \ar[r]^-{1\delta} & YB^2}$ and 
$\xymatrix@C=15pt{YB \ar[r]^-{11u} & YB^2}$. Hence the following diagrams
commute.
$$
\xymatrix{
Y \ar@{=}[r]\ar[d]_-f &
Y \ar[d]^-{1u} \ar@/^1.2pc/@{=}[rd] \\
YB \coten B I \ar[r]^-{p_{YB}} \ar@/_1.5pc/[rr]_-{f^{-1}} &
YB \ar[r]^-{1\varepsilon} & 
Y}\qquad
\xymatrix{
YB \coten B I \ar[r]_-{p_{YB}} \ar@/^1.5pc/[rr]^-{f^{-1}} \ar[d]_-{p_{YB}} &
YB \ar[r]_-{1\varepsilon} \ar[d]^-{11u} & 
Y \ar[d]^-{1u} \ar[r]^-f &
YB \coten B I \ar[d]^-{p_{YB}} \\
YB \ar[r]^-{1\delta} \ar@{=}@/_1.5pc/[rr] &
YB^2 \ar[r]^-{1\varepsilon 1} &
YB \ar@{=}[r] &
YB}
$$
This completes the characterization of the objects of
$\mathsf{DistLaw}_{\mathcal S}(\mathsf C)$. 
Concerning the morphisms 
$(\xymatrix@C=12pt{B\ar[r]^-b & B'},\xymatrix@C=12pt{Y\ar[r]^-y & Y'})$, the
first condition in Theorem \ref{thm:SplitEpi_vs_DLaw} is the counitality of
the bimonoid morphism $y$ hence it identically holds. The second condition 
in Theorem \ref{thm:SplitEpi_vs_DLaw} is equivalent to $y.l=l'.by$ by the
commutativity of
$$
\xymatrix@C=15pt{
BY \ar[r]_-{\delta 1} \ar[d]_-{by} \ar@/^1.2pc/[rrr]^-x&
B^2 Y \ar[r]_-{1c} \ar[d]^-{bby} &
BYB \ar[r]_-{l1} \ar[d]^-{byb} &
YB \ar[d]^-{yb} \\
B'Y' \ar[r]^-{\delta' 1}  \ar@/_1.2pc/[rrr]_-{x'}&
B^{\prime 2} Y' \ar[r]^-{1c} &
B'Y'B' \ar[r]^-{l'1} &
Y'B'}\qquad
\xymatrix{
BY \ar[r]_-x \ar[d]_-{by} \ar@/^1.2pc/[rr]^-l&
YB \ar[r]_-{1\varepsilon} \ar[d]^-{yb} &
Y \ar[d]^-y \\
B'Y'\ar[r]^-{x'} \ar@/_1.2pc/[rr]_-{l'} &
Y'B' \ar[r]^-{1\varepsilon'} &
Y'.}
$$

We can apply the current example to the particular case of a finitely complete
category $\mathsf M$ regarded with the Cartesian monoidal structure. Then the
category $\mathsf C$ of comonoids in $\mathsf M$ is isomorphic to $\mathsf M$
and the equivalent categories of Theorem \ref{thm:SplitEpi_vs_DLaw} reduce to
the following ones. 
\begin{itemize}
\item[{$\mathsf{Split}$}]\hspace{-.3cm} $\mathsf{EpiMon}_{\mathcal S}(\mathsf
  M)$ whose \\
\underline{objects} are split epimorphisms 
$\xymatrix@C=15pt{
B \ar@<-2pt>@{ >->}[r]_-i &
A \ar@<-2pt>@{->>}[l]_-s}$ 
of monoids in $\mathsf M$ such that in terms of the morphism $j$ of
\eqref{eq:cotensor},  
$q:=\xymatrix@C=15pt{
(A \coten B I)B \ar[r]^-{ji} & A^2 \ar[r]^-m &A}$
is invertible.\\
\underline{morphisms} are pairs of monoid morphisms which are compatible with
the epimorphisms $s$ as well as their sections $i$.
\item[{$\mathsf{Dist}\hspace{.08cm}$}]\hspace{-.38cm} $\mathsf{Law}_{\mathcal
  S}(\mathsf M)$ whose \\ 
\underline{objects} consist of monoids $B$ and $Y$ in $\mathsf M$, together
with a left $B$-action on $Y$ which makes the multiplication and the unit of
the monoid $Y$ left $B$-linear. \\
\underline{morphisms} are pairs of monoid morphisms 
$(\xymatrix@C=12pt{B \ar[r]^-b &B'},
\xymatrix@C=12pt{Y \ar[r]^-y &Y'})$
which are compatible with the actions 
$\xymatrix@C=12pt{BY \ar[r]^-l &Y}$ and
$\xymatrix@C=12pt{B'Y' \ar[r]^-{l'} &Y'}$ in the sense that $l'.by=y.l$.
\end{itemize}
\end{example}

Recall that a bimonoid $B$ --- with monoid structure $(m,u)$ and comonoid
structure $(\delta,\varepsilon)$  --- is a {\em Hopf monoid} provided that
there exists a morphism $\xymatrix@C=12pt{B \ar[r]^-z & B}$ --- the so-called
{\em antipode} --- which renders commutative 
$$
\xymatrix@R=5pt{
B \ar[r]^-\delta \ar[dd]_-\delta \ar[rd]_-\varepsilon & 
B^2 \ar[r]^-{z1} 
& B^2 \ar[dd]^-m \\
& I \ar[rd]^-u \\
B^2 \ar[r]_-{1z} & 
B^2 \ar[r]_-m &
B.}
$$
If the antipode exists then it is unique. It is a monoid 
morphism from $B$ to the monoid with the opposite multiplication $m.c$ and
comonoid morphism from $B$ to the comonoid with the opposite comultiplication
$c.\delta$. 

\begin{proposition} \label{prop:SplitEpi_Hopf}
\begin{itemize}
  \item[{(1)}] The equivalent categories of Example \ref{ex:CoMon_SplitEpi} have equivalent
full subcategories as follows.
   \begin{itemize}
      \item[{$\bullet$}] The category whose \\
               \underline{objects} are split epimorphisms 
              $\xymatrix@C=15pt{
              B \ar@<-2pt>@{ >->}[r]_-i &
              A \ar@<-2pt>@{->>}[l]_-s}$ 
              of bimonoids in $\mathsf M$ subject to the following conditions.
              \begin{itemize}
                 \item[{(a)}] The comultiplication $\delta$ of $A$ satisfies
                   $c.s1.\delta=1s.\delta$. 
                 \item[{(b)}] $B$ is a Hopf monoid.
             \end{itemize}
             \underline{morphisms} are pairs of bimonoid morphisms which are compatible with the epimorphisms $s$ as well as their sections $i$.
      \item[{$\bullet$}]   The category whose \\ 
              \underline{objects} consist of a cocommutative Hopf monoid $B$ and a bimonoid $Y$ in $\mathsf M$, together with a left $B$-action on $Y$ 
              which makes $Y$ both a left $B$-module monoid and a left $B$-module comonoid. \\ 
              \underline{morphisms} are pairs of bimonoid morphisms 
              $(\xymatrix@C=12pt{B \ar[r]^-b &B'},
              \xymatrix@C=12pt{Y \ar[r]^-y &Y'})$
              which are compatible with the actions 
              $\xymatrix@C=12pt{BY \ar[r]^-l &Y}$ and
              $\xymatrix@C=12pt{B'Y' \ar[r]^-{l'} &Y'}$ in the sense that $l'.by=y.l$.
   \end{itemize}
  \item[{(2)}] The equivalent categories of part (1) have equivalent
full subcategories as follows.
\begin{itemize}
      \item[{$\bullet$}]  The category whose \\
     \underline{objects} are split epimorphisms 
     $\xymatrix@C=15pt{
     B \ar@<-2pt>@{ >->}[r]_-i &
     A \ar@<-2pt>@{->>}[l]_-s}$ 
     of cocommutative Hopf monoids.
     \underline{morphisms} are pairs of bimonoid morphisms which are compatible with the epimorphisms $s$ as well as their sections $i$.
     \item[{$\bullet$}]   The category whose \\ 
     \underline{objects} consist of cocommutative Hopf monoids $B$ and $Y$ in $\mathsf M$, together with a left $B$-action on $Y$ which makes $Y$
     both a left $B$-module monoid and a left $B$-module comonoid. \\
     \underline{morphisms} are pairs of bimonoid morphisms 
     $(\xymatrix@C=12pt{B \ar[r]^-b &B'},       
      \xymatrix@C=12pt{Y \ar[r]^-y &Y'})$
      which are compatible with the actions 
      $\xymatrix@C=12pt{BY \ar[r]^-l &Y}$ and
      $\xymatrix@C=12pt{B'Y' \ar[r]^-{l'} &Y'}$ in the sense that $l'.by=y.l$.
\end{itemize}
\end{itemize}
\end{proposition}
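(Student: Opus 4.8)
The plan is to obtain both equivalences by \emph{restriction}: part (1) as a restriction of the equivalence of Example \ref{ex:CoMon_SplitEpi}, and part (2) as a further restriction of the equivalence of part (1). The decisive observation is that both functors of Example \ref{ex:CoMon_SplitEpi} transport the bimonoid $B$ identically — the functor $\mathsf{SplitEpiMon}_{\mathcal S}(\mathsf C)\to\mathsf{DistLaw}_{\mathcal S}(\mathsf C)$ leaves $B$ unchanged, and the functor in the opposite direction builds the split epimorphism $\varepsilon1\colon YB\to B$ on the very same $B$; similarly on morphisms. Hence the property ``$B$ is a Hopf monoid'' is preserved in both directions, and since the morphisms of all the categories in question are unchanged, it suffices in each part to match the \emph{object} conditions.

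For part (1), recall from Example \ref{ex:CoMon_SplitEpi} that condition (a) already amounts to cocommutativity of $B$: precomposing $c.s1.\delta=1s.\delta$ with the comonoid section $i$ and using $s.i=1$ forces $c.\delta=\delta$ on $B$. Thus the listed subcategories differ from those of Example \ref{ex:CoMon_SplitEpi} only in that ``$q$ invertible'' is replaced by ``$B$ Hopf'' on the split-epi side, while on the distributive-law side one merely adds ``$B$ Hopf'' (a cocommutative Hopf monoid being in particular a cocommutative bimonoid). So the one genuine point is that, \emph{whenever $B$ is Hopf and (a) holds, the morphism $q$ of Example \ref{ex:CoMon_SplitEpi}(b) is automatically invertible}; this shows the listed split epimorphisms are honest objects of $\mathsf{SplitEpiMon}_{\mathcal S}(\mathsf C)$ and identifies them with exactly its $B$-Hopf objects. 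I would exhibit the inverse as the corestriction to $(A \coten B I)B$ of the morphism $A\to AB$ given in Sweedler notation by $a\mapsto a_1.i(z(s(a_2)))\otimes s(a_3)$, where $z$ is the antipode of $B$; this is the evident generalization of the groupoid formula recorded under \eqref{eq:q_groupoid}. Condition (a) guarantees that the first tensorand is coinvariant (lands in $A \coten B I$); the identity $q.q^{-1}=1$ then reduces, via $\varepsilon.s=\varepsilon$, $i.u=u$ and the antipode axiom $m.z1.\delta=u\varepsilon$ on $B$, to the counit law on $A$, while $q^{-1}.q=1$ uses in addition the equalizer identity $1s.\delta.j=1u.j$ defining $A \coten B I$.

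For part (2) the subcategories of part (1) are already matched, so I only track the two extra conditions. On the split-epi side, cocommutativity of $A$ forces (a) by naturality of the braiding, and if $A$ is Hopf then so is its bimonoid retract $B$ (antipode $s.z.i$); hence part (2)'s split epimorphisms are precisely the part (1) objects with $A$ cocommutative Hopf, and dually the distributive-law objects are the part (1) objects with $Y$ cocommutative Hopf. It therefore remains to show that, under the part (1) equivalence, $A$ is cocommutative Hopf iff $Y$ is. The key tool is the natural isomorphism in the proof of Theorem \ref{thm:SplitEpi_vs_DLaw}, whose component $q\colon (A \coten B I)B\to A$ is a morphism of $\mathsf{SplitEpiMon}_{\mathcal S}(\mathsf C)$ and hence a \emph{bimonoid} isomorphism $YB\cong A$, where $YB$ carries the tensor comultiplication and the smash multiplication from $x$. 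Cocommutativity is then componentwise: as $B$ is cocommutative, $YB$ is cocommutative iff $Y$ is.

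For the Hopf condition I would argue $YB$ Hopf $\Leftrightarrow$ $Y$ Hopf (with $B$ Hopf fixed) inside the convolution monoid. Writing $\iota_Y=1u$, $\pi_Y=1\varepsilon$, $\iota_B=u1$, $\pi_B=\varepsilon1$ for the evident structure maps, the tensor comultiplication yields the factorization $1_{YB}=(\iota_Y.\pi_Y)\ast(\iota_B.\pi_B)$, where $\ast$ is convolution. If $Y,B$ are Hopf, both factors are convolution-invertible (inverses $\iota_Y.z_Y.\pi_Y$ and $\iota_B.z_B.\pi_B$, since $\iota_Y,\iota_B$ are algebra maps and $\pi_Y,\pi_B$ coalgebra maps), whence $1_{YB}$ is, with antipode $(\iota_B.z_B.\pi_B)\ast(\iota_Y.z_Y.\pi_Y)$. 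Conversely, if $YB$ is Hopf then, $\iota_B.\pi_B$ being invertible, so is $\iota_Y.\pi_Y$; precomposing with the comonoid morphism $\iota_Y$ — a monoid homomorphism $(\mathrm{Conv}(YB,YB),\ast)\to(\mathrm{Conv}(Y,YB),\ast)$ — shows $\iota_Y$ is $\ast$-invertible in $\mathrm{Conv}(Y,YB)$, say with inverse $w$. Applying $\pi_Y$ to $m_{YB}.(\iota_Y\otimes w).\delta_Y=u\varepsilon$ and using the partial multiplicativity $\pi_Y.m_{YB}.(\iota_Y1)=m_Y.(1\pi_Y)$ (valid by unitality of the $B$-action) produces a \emph{right} convolution inverse $\pi_Y.w$ of $1_Y$; as $Y$ is cocommutative, this one-sided antipode is two-sided, so $Y$ is Hopf. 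The main obstacle is exactly this backward direction: the retraction $\pi_Y=1\varepsilon$ is not multiplicative, so the antipode of $YB$ cannot be compressed naively to $Y$, and the argument must instead descend only a one-sided convolution inverse along $\iota_Y$ and upgrade it using cocommutativity.
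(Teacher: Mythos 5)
Your part (1) is essentially the paper's own proof: the distributive-law side is an obviously full subcategory, and the only substantive point is that condition (a) together with $B$ being a Hopf monoid forces $q$ to be invertible, which you settle with exactly the Radford-style formula $a\mapsto a_1\, i(z(s(a_2)))\otimes s(a_3)$ that the paper verifies in Figures \ref{fig:qinv} and \ref{fig:q_qinv}. The forward half of part (2) is also sound: your convolution factorization $1_{YB}=(\iota_Y.\pi_Y)\ast(\iota_B.\pi_B)$ yields the antipode $(\iota_B.z_B.\pi_B)\ast(\iota_Y.z_Y.\pi_Y)$, i.e.\ $y\otimes b\mapsto z(b)_1\triangleright z(y)\otimes z(b)_2$, which is the standard smash-product antipode (the paper states the antipode as $zz$ acting factorwise, which for a nontrivial action does not satisfy the antipode axioms, so your version is in fact the safer one).

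The gap is the final step of the backward half of part (2). You obtain only a \emph{right} convolution inverse $\pi_Y.w$ of $1_Y$ and then assert that cocommutativity of $Y$ makes this one-sided antipode two-sided. That is not a step you may take for granted: in a general symmetric monoidal category $\mathsf M$ the claim that a cocommutative bimonoid with a right antipode is Hopf would itself need a proof (the field-theoretic arguments via pointedness or the coradical filtration do not transfer), and the obvious monoid-theoretic upgrade fails because it would require $\pi_Y.w$ itself to have a right convolution inverse, which is exactly what is not available. The information you lose by projecting with the non-multiplicative $\pi_Y$ is precisely what is needed. The paper closes this differently: since $A$ (equivalently $YB$) is cocommutative, its antipode $z$ is a comonoid morphism, so $z.p_A$ (resp.\ $z.\iota_Y$) again equalizes the fork \eqref{eq:cotensor} defining $A\coten B I$ (resp.\ lands in the coinvariants $\iota_Y(Y)$); hence $z$ restricts to a morphism $z_Y$ of the sub-bimonoid, and \emph{both} antipode identities for $z_Y$ then follow by composing with the multiplicative monomorphism $p_A$ (resp.\ $\iota_Y$) rather than with $\pi_Y$. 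Note that your own $w=(\iota_Y.\pi_Y)^{-1}.\iota_Y$ equals $z.\iota_Y$, so you are one factorization away from this argument; as written, however, the step is a genuine hole.
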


\begin{proof}
(1) The second listed category is obviously a full subcategory of $\mathsf{DistLaw}_{\mathcal S}(\mathsf C)$ of Example \ref{ex:CoMon_SplitEpi}; thus via the equivalence of Theorem \ref{thm:SplitEpi_vs_DLaw} it is equivalent to some full subcategory of $\mathsf{SplitEpiMon}_{\mathcal S}(\mathsf C)$ of Example \ref{ex:CoMon_SplitEpi}. Our task is to show that it is the first listed category above. For that we only need to show that it is a subcategory of $\mathsf{SplitEpiMon}_{\mathcal S}(\mathsf C)$; that is, that for any object 
$\xymatrix@C=15pt{
B \ar@<-2pt>@{ >->}[r]_-i &
A \ar@<-2pt>@{->>}[l]_-s}$ 
of it, the morphism $q$ in part (b) of Example \ref{ex:CoMon_SplitEpi} is invertible. Following ideas in \cite{Radford}, we use the antipode $z$ of $B$ and the image of the equalizer \eqref{eq:cotensor} under the functor $-B$ to construct the inverse:
$$
\xymatrix{
&&&&&& (A \coten B I)B \ar[d]^-{j1} \\
A \ar[r]^(.7)\delta \ar@{-->}@/^1.5pc/[rrrrrru]^-{q^{-1}}&
A^2 \ar[r]^-{1s} &
AB \ar[r]^-{1\delta} &
AB^2 \ar[r]^-{1z1} &
AB^2 \ar[r]^-{1i1} &
A^2B \ar[r]^-{m1} & 
AB \ar@<-2pt>[d]_-{1s1.\delta 1} \ar@<2pt>[d]^-{1u1} \\
&&&&&& AB^2}
$$
This definition works because the horizontal morphism equalizes the parallel morphisms of the fork on the right; see Figure \ref{fig:qinv}.
\begin{figure} 
\centering
\begin{sideways}
$\xymatrix{
%  1.1
A \ar[rr]^-\delta \ar[dd]_-\delta &&
% 1.3
A^2 \ar[r]^-{1s} \ar[d]^-{1\delta} &
% 1.4
AB \ar[r]^-{1\delta} &
% 1.5
AB^2 \ar[rr]^-{1z1} \ar[dd]^-{\delta \delta 1} &&
% 1.7
AB^2 \ar[rr]^-{1i1} \ar[dd]^-{\delta \delta 1} &&
% 1.9
A^2B \ar[rr]^-{m1} \ar[dd]^-{\delta \delta 1} &&
% 1.11
AB \ar[dd]^-{\delta 1} \\
% 2.3
&& A^3 \ar[d]^-{\delta \delta 1} \\
% 3.1
A^2 \ar[r]^-{1\delta} \ar[d]_-{1s} &
% 3.2
A^3 \ar[r]^-{1\delta \delta} &
% 3.3
A^5 \ar[rr]^-{11sss} &&
 % 3.5
A^2B^3 \ar[r]^-{11c1} \ar[dd]^-{1s111} &
% 3.6
A^2B^3 \ar[r]^-{11zz1} &
% 3.7
A^2B^3 \ar[rr]^-{11ii1} \ar[rrdd]^-{1si11} &&
% 3.9
A^4B \ar[r]^-{1c11} \ar[dd]^-{1s1s1} &
% 3.10
A^4B \ar[r]^-{mm1} \ar[dd]^-{11ss1} &
% 3.11
A^2B \ar[ddddddd]^-{1s1} \\
% 4.1 
AB \ar[rd]^-{1\delta} \ar@{=}[dd] \\
% 5.2
& AB^2 \ar[r]^-{11\delta} \ar[ld]^-{1\varepsilon 1} &
% 5.3
AB^3 \ar[rr]^-{1\delta 11} \ar[lldd]^-{1\varepsilon 11} &&
% 5.5
AB^4 \ar[r]^-{11z11} &
% 5.6
AB^4 \ar[r]^-{111z1} \ar[dd]^-{1m11} &
% 5.7
AB^4 \ar[r]^-{111i1} &
% 5.8
AB^2AB \ar[r]^-{11c1} \ar[dd]^-{1m11} &
% 5.9
(AB)^2B \ar[r]^-{1c11} &
% 5.10
A^2B^3 \ar[rddddd]_-{mm1} \\
% 6.1
AB \ar[d]_-{1\delta} \\
% 7.1
AB^2 \ar[rrrrr]^-{1u11} \ar[d]_-{1z1} &&&&&
% 7.6
AB^3 \ar[r]^-{11z1} &
% 7.7
AB^3 \ar[r]^-{11i1} &
% 7.8
(AB)^2 \ar[rrdd]^-{1c1} \\
% 8.1
AB^2 \ar[d]_-{1i1} \\
% 9.1
A^2B \ar[d]_-{m1} \ar[rrrrrrruu]^-{1u11} \ar[rrrrrrrrr]^-{11u1} &&&&&&&&&
% 9.10
A^2B^2 \ar[rd]^-{m1} \\
% 10.1
AB \ar[rrrrrrrrrr]_-{1u1} &&&&&&&&&&
% 10.11
AB^2}$
\end{sideways}
\caption{Construction of $q^{-1}$}
\label{fig:qinv}
\end{figure}
The so constructed morphism $q^{-1}$ is the inverse of $q$ by the commutativity of the diagrams of Figure \ref{fig:q_qinv} (in the second case we also need to use that the columns are equal monomorphisms).
\begin{figure} 
\centering
\begin{sideways}
$\xymatrix@C=18pt@R=59pt{
% 1.1
A \ar[rrrr]^-{q^{-1}} \ar[d]^-\delta \ar@/_1.2pc/@{=}[dddd] &&&&
% 1.5
(A \coten B I)B \ar[dd]_-{j1} \ar@/^1.2pc/[dddd]^-q \\
% 2.1
A^2 \ar[d]^-{1s} \\
% 3.1
AB \ar[r]^-{1\delta} \ar[dd]^-{1\varepsilon} &
% 3.2
AB^2 \ar[r]^-{1z1} &
% 3.3
AB^2 \ar[r]^-{1i1} \ar[dd]^-{1m} &
% 3.4
A^2B \ar[r]^-{m1} \ar[d]^-{11i} &
% 3.5
AB 
\ar[d]_-{1i} \\
&&&
% 4.4 
A^3 \ar[r]^-{m1} \ar[d]^-{1m} &
% 4.5
A^2 \ar[d]_-m \\
% 5.1
A \ar[rr]^-{1u} \ar@/_2pc/@{=}[rrrr] &&
% 5.3
AB \ar[r]^-{1i} &
% 5.4
A^2 \ar[r]^-m &
A}\quad
\xymatrix@C=15pt{
% 1.1
(A \coten B I)B \ar[rrr]_-{ji} \ar@/^1.5pc/[rrrrr]^-q \ar[rd]^-{1\delta} \ar[ddd]_-{j1} &&&
% 1.4
A^2 \ar[rr]_-m \ar[d]^-{\delta\delta} &&
% 1.6
A \ar[r]^-{q^{-1}} \ar[d]^-\delta &
% 1.7 
(A \coten B I)B \ar[ddddddd]^-{j1} \\
% 2.2
& (A \coten B I)B^2 \ar[r]^-{1ii} \ar[rd]_-{1i1} &
% 2.3
(A \coten B I)A^2 \ar[d]^-{11s} &
% 2.4
A^4 \ar[r]^-{1c1} \ar[dd]^-{1s1s} &
% 2.5
A^4 \ar[r]^-{mm} \ar[dd]^-{11ss} &
% 2.6
A^2 \ar[dd]^-{1s} \\
% 3.3
&& (A \coten B I)AB \ar[d]^-{j11} \\
% 4.1
AB \ar[r]^-{1\delta} \ar[d]^-{1\delta} \ar@/_1.5pc/@{=}[dddd] &
% 4.2
AB^2 \ar[r]^-{1i1} \ar[d]^-{11\delta} &
% 4.3
A^2B \ar[r]^-{1u11} \ar[d]^-{11\delta} \ar@/_1.2pc/[rrr]_-{m1} &
% 4.4
(AB)^2 \ar[r]^-{1c1} &
% 4.5
A^2B^2 \ar[r]^-{mm} &
% 4.6
AB \ar[d]^-{1\delta} \\
% 5.1
AB^2 \ar[r]^-{1\delta 1} \ar[ddd]^-{1\varepsilon 1}  &
% 5.2
AB^3 \ar[r]^-{1i11} \ar[d]^-{11z1} &
% 5.3
A^2B^2 \ar[rrr]^-{m11} \ar[d]^-{11z1} &&&
% 5.6
AB^2 \ar[d]^-{1z1} \\
% 6.2
& AB^3 \ar[r]^-{1i11} \ar[dd]^-{1m1} &
% 6.3
A^2B^2 \ar[rrr]^-{m11} \ar[d]^-{11i1} &&&
% 6.6
AB^2 \ar[d]^-{1i1} \\
% 7.3
&& A^3B \ar[rrr]^-{m11} \ar[d]^-{1m1} &&&
% 7.6
A^2B \ar[rd]^-{m1} \\
% 8.1
AB \ar[r]^-{1u1} \ar@/_2pc/@{=}[rrrrrr] &
% 8.2
AB^2 \ar[r]^-{1i1} &
% 8.3
A^2B \ar[rrrr]^-{m1} &&&&
% 8.7
AB}$
\end{sideways}
\caption{Invertibility of $q$}
\label{fig:q_qinv}
\end{figure}
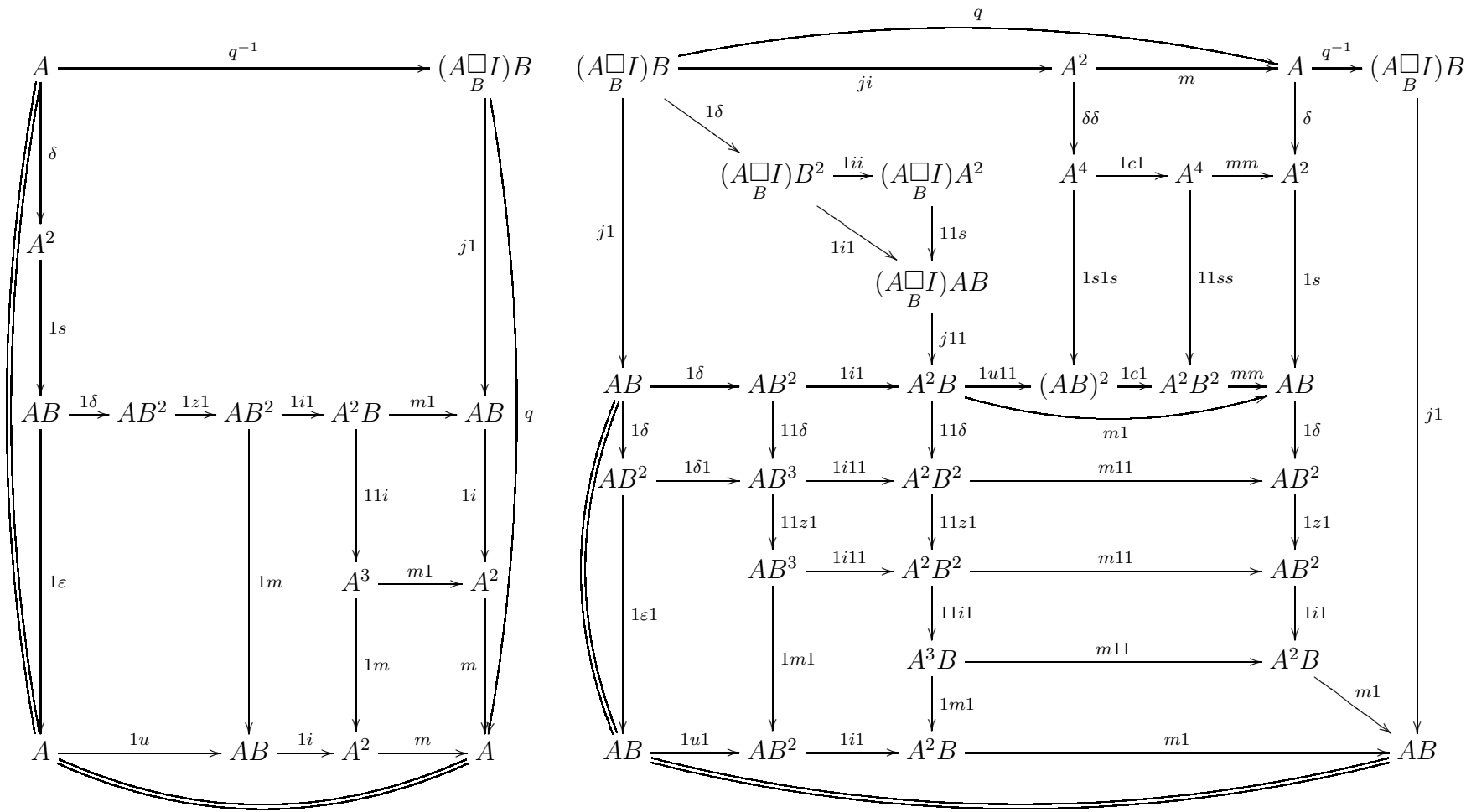

(2) If both $Y$ and $B$ are cocommutative comonoids then clearly so is $YB$; and if both $Y$ and $B$ have antipodes $z$ then
$\xymatrix@C=12pt{
YB \ar[r]^-{zz} & YB}$
is the antipode of the Hopf monoid $YB$.

Conversely, if $A$ is cocommutative then evidently so is its sub-comonoid $A\coten B I$. If furthermore $A$ has an antipode $z$ then it restricts to $A\coten B I$ by the commutativity of the following diagram.
$$
\xymatrix@R=15pt{
A\coten B I \ar[r]^-j \ar[dd]_-j &
A \ar[r]^-z \ar[d]^-\delta &
A \ar[d]^-\delta \\
& A^2 \ar[r]^-{zz} \ar[d]^-{1s} &
A^2 \ar[dd]^-{1s} \\
A \ar[r]^-{1u} \ar[d]_-z &
AB \ar[rd]^-{zz} \\
A \ar[rr]_-{1u} &&
AB}
$$
The top right region commutes by the Hopf monoid identity $\delta.z=zz.c.\delta$ and the assumed cocommutativity of $A$. The bottom right region commutes since any bimonoid morphism $s$ commutes with the antipodes. 
\end{proof}

\begin{example}
Proposition \ref{prop:SplitEpi_Hopf} can be applied in particular to a finitely complete category $\mathsf M$, regarded as a Cartesian monoidal category. From Proposition \ref{prop:SplitEpi_Hopf} we obtain equivalences between the following pairs of categories.
\begin{itemize}
  \item[{(1)}] 
   \begin{itemize}
      \item[{$\bullet$}] The category whose \\
               \underline{objects} are split epimorphisms 
              $\xymatrix@C=15pt{
              B \ar@<-2pt>@{ >->}[r]_-i &
              A \ar@<-2pt>@{->>}[l]_-s}$ 
              of monoids in $\mathsf M$ such that  $B$ is a group object. \\
             \underline{morphisms} are pairs of monoid morphisms which are compatible with the epimorphisms $s$ as well as their sections $i$.
      \item[{$\bullet$}]   The category whose \\ 
              \underline{objects} consist of a group object $B$ and a monoid $Y$ in $\mathsf M$, together with a left $B$-action on $Y$ 
              which makes $Y$ a left $B$-module monoid. \\ 
              \underline{morphisms} are pairs of monoid morphisms 
              $(\xymatrix@C=12pt{B \ar[r]^-b &B'},
              \xymatrix@C=12pt{Y \ar[r]^-y &Y'})$
              which are compatible with the actions 
              $\xymatrix@C=12pt{BY \ar[r]^-l &Y}$ and
              $\xymatrix@C=12pt{B'Y' \ar[r]^-{l'} &Y'}$ in the sense that $l'.by=y.l$.
   \end{itemize}
  \item[{(2)}] 
\begin{itemize}
      \item[{$\bullet$}]  The category whose \\
     \underline{objects} are split epimorphisms 
     $\xymatrix@C=15pt{
     B \ar@<-2pt>@{ >->}[r]_-i &
     A \ar@<-2pt>@{->>}[l]_-s}$ 
     of group objects. \\
     \underline{morphisms} are pairs of monoid morphisms which are compatible with the epimorphisms $s$ as well as their sections $i$.
     \item[{$\bullet$}]   The category whose \\ 
     \underline{objects} consist of group objects $B$ and $Y$ in $\mathsf M$, together with a left $B$-action on $Y$ which makes $Y$
     a left $B$-module group. \\
     \underline{morphisms} are pairs of bimonoid morphisms 
     $(\xymatrix@C=12pt{B \ar[r]^-b &B'},       
      \xymatrix@C=12pt{Y \ar[r]^-y &Y'})$
      which are compatible with the actions 
      $\xymatrix@C=12pt{BY \ar[r]^-l &Y}$ and
      $\xymatrix@C=12pt{B'Y' \ar[r]^-{l'} &Y'}$ in the sense that $l'.by=y.l$.
\end{itemize}
\end{itemize}
\end{example}

\begin{remark}
There are particular symmetric monoidal categories $\mathsf M$ whose cocommutative Hopf monoids  constitute semi-abelian categories $\mathsf{Hopf}(\mathsf M)$; e.g. the category of sets (which is Cartesian monoidal hence the Hopf monoids are the groups, all of them cocommutative) or the category of vector spaces over an algebraically closed field (see \cite{GKV}). In such cases the equivalence of Proposition \ref{prop:SplitEpi_Hopf}~(2) is in fact the equivalence $\mathsf{SplitEpi}(\mathsf{Hopf}(\mathsf M))\cong \mathsf{Act}(\mathsf{Hopf}(\mathsf M))$ discussed in \cite[Section 1]{Janelidze}, see \cite[Example 3.10]{Janelidze}.
\end{remark}

%%%%%%%%%%%%%%%%%%%%%%%%%%%%%% SEC 5 %%%%%%%%%%%%%%%%%%%%%%%%%%%%%%%%%%%%%%%%

\section{Reflexive graphs of monoids versus pre-crossed modules} 
\label{sec:refl_graph}

Consider a monoidal admissible class $\mathcal S$ of spans in a monoidal
category $\mathsf C$ for which \cite[Assumption 4.1]{Bohm:Xmod_I} holds. 
Take an object 
$\xymatrix@C=15pt{
B \ar@<-2pt>@{ >->}[r]_-i &
A \ar@<-2pt>@{->>}[l]_-s}$ 
in the category $\mathsf{SplitEpiMon}_{\mathcal S}(\mathsf C)$ of Theorem
\ref{thm:SplitEpi_vs_DLaw}. Then by property (b) in Theorem
\ref{thm:SplitEpi_vs_DLaw}, the induced morphism 
$q:=\xymatrix@C=15pt{
(A\coten B I) B \ar[r]^-{p_A i} &
A^2 \ar[r]^-m &
A}$
is invertible. Therefore by \cite[Corollary 1.7]{Bohm:Xmod_I} there is a
bijective correspondence between the retractions $t$ of the monoid morphism
$i$ and the monoid morphisms  
$\xymatrix@C=12pt{
A \coten B I \ar[r]^-k &B}$
rendering commutative
$$
\xymatrix@C=15pt{
B(A\coten B I) \ar[rr]^-{ip_A} \ar[d]_-{1k} &&
A^2 \ar[rr]^-m &&
A \ar[rr]^-{q^{-1}} &&
(A\coten B I)B\ar[d]^-{k1} \\
B^2 \ar[rrr]_-m &&&
B &&&
B^2. \ar[lll]^-m}
$$
The correspondence is given by 
$$
t\mapsto k:=\xymatrix@C=12pt{A\coten B I \ar[r]^-{p_A} & A \ar[r]^-t & B} \qquad
k\mapsto t:=\xymatrix@C=15pt{A \ar[r]^-{q^{-1}} & (A\coten B I)B \ar[r]^-{k1} & B^2 \ar[r]^-m &B}.
$$
Combining this observation with the equivalence of Theorem \ref{thm:SplitEpi_vs_DLaw}, next we present an equivalent description of a suitable category of reflexive graphs of monoids. This leads to the notion of pre-crossed module over a monoid.

\begin{theorem} \label{thm:ReflGraph_vs_PreX}
Consider a monoidal admissible class $\mathcal S$ of spans in a monoidal category $\mathsf C$ for which \cite[Assumption 4.1]{Bohm:Xmod_I} holds. The following categories are equivalent.
\begin{itemize}
\item[{$\mathsf{Refl}$}]\hspace{-.3cm} $\mathsf{GraphMon}_{\mathcal S}(\mathsf
  C)$ whose \\
\underline{objects} are reflective graphs
$\xymatrix@C=20pt{
B \ar@{ >->}[r]|(.55){\, i\, } &
A \ar@{->>}@<-4pt>[l]_-s  \ar@{->>}@<4pt>[l]^-t}$ 
of monoids in $\mathsf C$ subject to the following conditions.
\begin{itemize}
\item[{(a)}] 
$\xymatrix@C=12pt{
A \ar@{=}[r] & A \ar[r]^-s & B}\in \mathcal S$ (hence the $\mathcal S$-relative pullback $A \coten B I$ in Theorem \ref{thm:SplitEpi_vs_DLaw} exists).
\item[{(b)}]
$q:=\xymatrix@C=18pt{
(A \coten B I)B \ar[r]^-{p_Ai} &
A^2 \ar[r]^-m &
A}$ is invertible.
\end{itemize}
\underline{morphisms} are pairs of monoid morphisms 
$(\xymatrix@C=12pt{B \ar[r]^-b & B'},\xymatrix@C=12pt{A \ar[r]^-a & A'})$
such that $s'.a=b.s$, $t'.a=b.t$ and $i'.b=a.i$.
\smallskip

\item[{$\mathsf{Pre}\hspace{.08cm}$}]\hspace{-.38cm} $\mathsf{X}_{\mathcal
  S}(\mathsf C)$ whose \\ 
\underline{objects} consist of monoids $B$ and $Y$, monoid morphisms
$\xymatrix@C=12pt{Y \ar[r]^-e & I}$ and
$\xymatrix@C=12pt{Y \ar[r]^-k& B}$
and a distributive law
$\xymatrix@C=12pt{BY \ar[r]^-x & YB}$ subject to the following conditions.
\begin{itemize}
\item[{(a')}] 
$\xymatrix@C=12pt{
Y \ar@{=}[r] & Y \ar[r]^-e & I}\in \mathcal S$
and
$\xymatrix@C=12pt{
B \ar@{=}[r] & B \ar@{=}[r] & B}\in \mathcal S$.
\item[{(b')}] $e1.x=1e$ and $m.k1.x=m.1k$.
\item[{(c')}] The morphism $f$ of Theorem \ref{thm:SplitEpi_vs_DLaw}~(c') is invertible.
\end{itemize}
\underline{morphisms} are pairs of monoid morphisms 
$(\xymatrix@C=12pt{B \ar[r]^-b & B'},\xymatrix@C=12pt{Y \ar[r]^-y & Y'})$
such that $e'.y=e$, $k'.y=b.k$ and $x'.by=yb.x$.
\end{itemize}
\end{theorem}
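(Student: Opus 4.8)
The plan is to upgrade the equivalence of Theorem~\ref{thm:SplitEpi_vs_DLaw} by transporting along it the bijective correspondence recalled just above between retractions $t$ of $i$ and monoid morphisms $\xymatrix@C=12pt{A\coten B I \ar[r]^-k & B}$ satisfying the displayed compatibility square. The decisive observation is that under the functor $\mathsf{SplitEpiMon}_{\mathcal S}(\mathsf C)\to\mathsf{DistLaw}_{\mathcal S}(\mathsf C)$ of Theorem~\ref{thm:SplitEpi_vs_DLaw} the object $Y$ is realized as $A\coten B I$ and the induced distributive law is precisely $x=q^{-1}.m.(ip_A)$. Rewriting the compatibility square for $k$ along this identification, its upper path becomes $m.k1.x$ and its lower path $m.1k$, so the square collapses to the single equation $m.k1.x=m.1k$ --- exactly the second half of condition~(b'). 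Hence the two extra pieces of data match under precisely the imposed conditions.

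First I would define the functor $\mathsf{ReflGraphMon}_{\mathcal S}(\mathsf C)\to\mathsf{PreX}_{\mathcal S}(\mathsf C)$ by composing the object and morphism maps of Theorem~\ref{thm:SplitEpi_vs_DLaw} with the assignment $k:=t.p_A$. As $p_A$ is a monoid morphism by \cite[Proposition~3.7~(1)]{Bohm:Xmod_I} and $t$ is one by hypothesis, $k$ is a monoid morphism; conditions~(a'), (c') and the first equation of~(b') are inherited verbatim from Theorem~\ref{thm:SplitEpi_vs_DLaw}, while the second equation of~(b') is the content of \cite[Corollary~1.7]{Bohm:Xmod_I} rewritten as above. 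On morphisms, the naturality $p_{A'}.(a\morcoten 1)=a.p_A$ of the projection converts the reflexive-graph condition $t'.a=b.t$ into the pre-crossed-module condition $k'.(a\morcoten 1)=b.k$.

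In the reverse direction I would compose the functor $\mathsf{DistLaw}_{\mathcal S}(\mathsf C)\to\mathsf{SplitEpiMon}_{\mathcal S}(\mathsf C)$ of Theorem~\ref{thm:SplitEpi_vs_DLaw} --- which outputs $A=YB$, $i=u1$, $s=e1$ --- with the reconstruction $t:=\xymatrix@C=15pt{A \ar[r]^-{q^{-1}} & (A\coten B I)B \ar[r]^-{k1} & B^2 \ar[r]^-m & B}$ of the retraction from $k$ (after identifying $A\coten B I$ with $Y$ via the isomorphism $f$). By \cite[Corollary~1.7]{Bohm:Xmod_I}, condition~(b') guarantees that this $t$ is a monoid morphism and a genuine retraction of $i$, so that $(i,s,t)$ is a reflexive graph satisfying~(a) and~(b); and $k'.y=b.k$ translates back to $t'.a=b.t$ exactly as before.

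It then remains to verify that the two natural isomorphisms of Theorem~\ref{thm:SplitEpi_vs_DLaw}, with components $(1,q)$ and $(1,f)$, are still natural isomorphisms for the enlarged functors; that is, that each component is a morphism in the richer categories. For the first composite this demands $t.q=t''$, where $t''$ is the target reconstructed on $(A\coten B I)B$; using $t=m.k1.q^{-1}$ it reduces to $t.q=m.k1$ followed by a short chase identifying $m.k1$ with $t''$. For the second composite one must check that $f$ intertwines the reconstructed $k''$ with the original $k$, i.e.\ $k''.f=k$, which follows from the defining triangle $p_{YB}.f=1u$, $p_I.f=e$ of $f$. I expect this last step to be the main obstacle, as it is exactly the point at which one must confirm that the fibrewise bijection of \cite[Corollary~1.7]{Bohm:Xmod_I} is carried \emph{coherently} through the equivalence rather than merely matched object by object.
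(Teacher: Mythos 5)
Your proposal is correct and follows essentially the same route as the paper: lift the equivalence of Theorem \ref{thm:SplitEpi_vs_DLaw} via the bijection $t\leftrightarrow k$ from \cite[Corollary 1.7]{Bohm:Xmod_I} (setting $k:=t.p_A$ one way and $t:=m.k1$ on $YB$ the other), observe that the compatibility square for $k$ collapses to $m.k1.x=m.1k$, and then verify that the components $(1,q)$ and $(1,f)$ remain morphisms in the enlarged categories via the identities $t.q=m.k1$ and $k''.f=k$. The final step you flag as the main obstacle is in fact handled in the paper by two short diagram chases exactly as you sketch, so no genuine difficulty remains.
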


\begin{proof}
We show that the equivalence functors of Theorem \ref{thm:SplitEpi_vs_DLaw} lift to the equivalence of the claim. 
In the direction $\mathsf{ReflGraphMon}_{\mathcal S}(\mathsf C)\to \mathsf{PreX}_{\mathcal S}(\mathsf C)$ we send
$$
\xymatrix@C=30pt{
B \ar@{ >->}[r]|(.55){\, i\, }  \ar[d]_-b&
A \ar@<-4pt>@{->>}[l]_-s \ar@<4pt>@{->>}[l]^-t \ar[d]^-a \\
B' \ar@{ >->}[r]|(.55){\, i'\, } &
A' \ar@<-4pt>@{->>}[l]_-{s'}  \ar@<4pt>@{->>}[l]^-{t'}}
$$
to
$$
\xymatrix@C=12pt@R=12pt{
(A\coten B I, \ar[d]^-{a\morcoten 1}  & 
\hspace{-2cm}B, \ar[d]^-b \hspace{-2cm} &
A\coten B I \ar[r]^-{p_I} & I,  &
\hspace{-.5cm}  A\coten B I \ar[r]^-{p_A} & A \ar[r]^-t &B, &
\hspace{-.5cm}  B(A\coten B I) \ar[r]^-{ip_A} & A^2 \ar[r]^-m & A \ar[r]^-{q^{-1}} & 
(A\coten B I)B)   \\
(A'\coten {B'} I, &
\hspace{-2cm}  B', \hspace{-2cm}  &  
A' \coten {B'} I \ar[r]^-{p_I} & I, &
\hspace{-.5cm}  A'\coten {B'} I \ar[r]^-{p_{A'}} & A' \ar[r]^-{t'} &B', &
\hspace{-.5cm}  B'(A'\coten {B'} I) \ar[r]^-{i'p_{A'}} & A^{\prime 2} \ar[r]^-{m'} & A' 
\ar[r]^-{q^{\prime -1}} & 
(A'\coten {B'} I)B'). }
$$
By \cite[Proposition 3.7~(1)]{Bohm:Xmod_I}, $p_A$ is a monoid morphism hence so is $t.p_A$. The second condition in (b') holds by the considerations preceding the theorem. Hence in light of the proof of Theorem \ref{thm:SplitEpi_vs_DLaw} the object map is well-defined. Concerning the morphisms, the second condition holds by the commutativity of
$$
\xymatrix{
A\coten B I \ar[r]^-{p_A} \ar[d]_-{a\diagcoten 1} &
A \ar[r]^-t \ar[d]^-a &
B \ar[d]^-b \\
A' \coten {B'} I \ar[r]_-{p_{A'}}  &
A' \ar[r]_-{t'} &
B'.}
$$
Thus using again the proof of Theorem \ref{thm:SplitEpi_vs_DLaw} we conclude that this functor is well-defined.

In the opposite direction
$\mathsf{PreX}_{\mathcal S}(\mathsf C) \to
\mathsf{ReflGraphMon}_{\mathcal S}(\mathsf C)$
we put
$$
\xymatrix@C=12pt@R=20pt{
(Y, \ar[d]_-y  & 
\hspace{-.5cm}B, \ar[d]^-b \hspace{-.5cm} &
Y \ar[r]^-e & I, &
Y \ar[r]^-k &B, &
\hspace{-.5cm} 
BY \ar[r]^-x & YB)   \\
(Y', &
\hspace{-.5cm}  B', \hspace{-.5cm}  &  
Y' \ar[r]^-{e'} & I, &
Y' \ar[r]^-{k'} &B', &
\hspace{-.5cm} 
B'Y' \ar[r]^-{x'} & Y'B')}
\quad \raisebox{-18pt}{$\mapsto$} \quad
\xymatrix@C=30pt{
B \ar@{ >->}[r]|-(.55){\, u1\, } \ar[d]_-b&
YB \ar@<-4pt>@{->>}[l]_-{e1} \ar@<4pt>@{->>}[l]^-{m.k1} \ar[d]^-{yb} \\
B' \ar@{ >->}[r]|-{\, u'1\, } &
Y'B'. \ar@<-4pt>@{->>}[l]_-{e'1} \ar@<4pt>@{->>}[l]^-{m'.k'1}}
$$
By the considerations preceding the theorem $m.k1$ is a monoid morphism. It is a retraction of 
$\xymatrix@C=12pt{B \ar[r]^-{u1} & AB}$ by the unitality of $k$. The monoid morphisms $(b, yb)$ are compatible with $m.k1$ by the compatibility of $(b,y)$ with $k$ and the multiplicativity of $b$. So using again the proof of Theorem \ref{thm:SplitEpi_vs_DLaw} we conclude that this functor is well-defined too.

By the commutativity of 
$$
\xymatrix@R=15pt{
(A\coten B I)B \ar[r]^-{p_A 1} \ar[d]^-{p_A i} \ar@/_1.2pc/[dd]_-q &
AB \ar[r]^-{t1} &
B^2 \ar[r]^-m 
%\ar@{=}[d] 
&
B \ar@{=}[dd] \\
A^2 \ar[rru]_-{tt} \ar[d]^-m \\
A \ar[rrr]_-t &&&
B}\qquad
\xymatrix@R=47pt{
Y \ar@{=}[r] \ar[d]_-f &
Y \ar[r]^-k \ar[d]^-{1u} &
B \ar[d]^-{1u} \ar@{=}@/^1.5pc/[rd] \\
YB \coten B I \ar[r]_-{p_{YB}} &
YB \ar[r]_-{k1} &
B^2 \ar[r]_-m &
B}
$$
the components $(1,q)$ and $(1,f)$ of the natural isomorphisms  in the proof
of Theorem \ref{thm:SplitEpi_vs_DLaw} are morphisms in the appropriate
category. This proves that the stated functors are mutually inverse equivalences.
\end{proof}

\begin{lemma} \label{lem:t-k_in_S}
Consider a monoidal admissible class $\mathcal S$ of spans in a monoidal
category $\mathsf C$ for which \cite[Assumption 4.1]{Bohm:Xmod_I} holds. 
For any object
$\xymatrix@C=20pt{
B \ar@{ >->}[r]|(.55){\, i\, } &
A \ar@{->>}@<-4pt>[l]_-s  \ar@{->>}@<4pt>[l]^-t}$ 
of the category $\mathsf{ReflGraphMon}_{\mathcal S}(\mathsf C)$ of Theorem \ref{thm:ReflGraph_vs_PreX}, the following assertions are equivalent. 
\begin{itemize}
\item[{(i)}] $\xymatrix@C=12pt{B & \ar[l]_-t A \ar@{=}[r] & A}\in \mathcal S$.
\item[{(ii)}] $\xymatrix@C=12pt{B && \ar[ll]_-{k:=t.p_A} A\coten B I  \ar@{=}[r] & A \coten B I}\in \mathcal S$.
\end{itemize}
\end{lemma}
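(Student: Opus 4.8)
The plan is to lift both spans to the object $(A\coten B I)B$, to use the invertible comparison morphism $q$ to identify this object with $A$, and then to pass between the two spans employing only the monoidal closure properties of $\mathcal S$ together with the relative-pullback structure. First I would record the identities that tie the data together. Since $t$ is a monoid morphism with $t.i=1$, one gets $t.q=m.k1$, the multiplication on the right being that of $B$; since $i$ is unital, $q.1u=p_A$, so that $k=t.p_A=(t.q).1u=m.k1.1u$. In particular each of the spans in (i) and (ii) has an identity as its right leg, and the two identities $t.q=m.k1$ and $q.1u=p_A$ let the invertible $q$ mediate between the apexes $A$ and $(A\coten B I)B$.

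For the implication (ii)$\Rightarrow$(i) I would proceed as follows. Starting from the span $(k,1)$ of (ii) on the apex $A\coten B I$, multiplicativity of $\mathcal S$ (tensoring with the object $B$) gives $(k1,1)\in\mathcal S$ on the apex $(A\coten B I)B$. Postcomposing the left leg $k1$ with the multiplication $m$ of $B$ --- which is exactly the admissibility condition (POST) of \cite[Definition 2.1]{Bohm:Xmod_I}, applied as in the proof of Theorem \ref{thm:SplitEpi_vs_DLaw} --- yields $(m.k1,1)=(t.q,1)\in\mathcal S$. Finally, since $q$ is invertible, closure of $\mathcal S$ under composing a span with isomorphisms lets me precompose the apex by $q^{-1}$ and postcompose the right leg by $q$, turning $(t.q,1)$ into $(t,1)\in\mathcal S$; this is (i).

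For the converse (i)$\Rightarrow$(ii) I would realize the $k$-span as the base change of the $t$-span along the projection $p_A$. Since the right leg of $(t,1)$ is an identity, its pullback along $p_A\colon A\coten B I\to A$ has apex $A\coten B I$ and legs $(t.p_A,1)=(k,1)$; equivalently, $(k,1)$ is obtained by precomposing the apex of $(t,1)$ with $p_A$. Membership in $\mathcal S$ is inherited here because $p_A$ is the projection of the $\mathcal S$-relative pullback $A\coten B I$ of the cospan $A\xrightarrow{s}B\xleftarrow{u}I$ --- a leg of the universal $\mathcal S$-cone --- so the relevant base change is controlled by the admissibility of $\mathcal S$; concretely I would invoke the description of these projections in \cite[Lemma 3.4]{Bohm:Xmod_I}. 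This is the step I expect to be the main obstacle: an unrestricted precomposition/base-change closure would force (i) to hold for every reflexive graph (by pulling back the identity span along $p_A$), so it is essential to use that $p_A$ genuinely arises from the relative pullback and to appeal to the precise Part~I hypotheses rather than to a generic stability property. The remaining checks that the legs produced at each stage are the asserted ones are the routine identities recorded above.
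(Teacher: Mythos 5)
Your proposal is correct and follows essentially the same route as the paper: the direction (i)$\Rightarrow$(ii) is exactly the appeal to \cite[Lemma 3.4]{Bohm:Xmod_I} (and your caveat that a generic precomposition-closure would be too strong is precisely why that lemma, rather than (PRE) alone, is needed), while (ii)$\Rightarrow$(i) uses multiplicativity to pass to $(A\coten B I)B$, the identity $t=m.k1.q^{-1}$, and (PRE)/(POST) together with the invertibility of $q$ --- only the order in which you apply $m$ and conjugate by $q$ differs from the paper's. No gaps.
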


\begin{proof}
Assertion (i) implies (ii) by \cite[Lemma 3.4]{Bohm:Xmod_I}.
Conversely, since $\xymatrix@C=10pt{B & \ar@{=}[l] B \ar@{=}[r] & B}\in \mathcal S$ by assumption, (ii) implies 
$\xymatrix@C=12pt{B^2 & \ar[l]_-{k1} (A\coten B I)B \ar@{=}[r] & (A\coten B I)B}\in \mathcal S$
by the multiplicativity of $\mathcal S$. Hence by (PRE) also
$\xymatrix@C=15pt{B^2 & \ar[l]_-{k1} (A\coten B I)B & \ar[l]_-{q^{-1}} A \ar[r]^-{q^{-1}} & (A\coten B I)B}\in \mathcal S$.
Then using the identity $t=m.k1.q^{-1}$ from the proof of Theorem \ref{thm:ReflGraph_vs_PreX}, (i) follows by (POST) (composing by $m$ on the left and by $q$ on the right).
\end{proof}

\begin{example} \label{ex:ReflGraph_groupoid}
As in Example \ref{ex:SplitEpi_groupoid}, take the (evidently admissible and
monoidal) class of all spans in the monoidal category $\mathsf C$ of spans
over a fixed set $X$. Then the equivalent categories of Theorem
\ref{thm:ReflGraph_vs_PreX} take the following forms.
\begin{itemize}
\item[{$\mathsf{Refl}$}]\hspace{-.3cm} $\mathsf{GraphMon}(\mathsf C)$ whose \\
\underline{objects} are reflective graphs
$\xymatrix@C=20pt{
B \ar@{ >->}[r]|(.55){\, \iota\, } &
A \ar@{->>}@<-4pt>[l]_-\sigma  \ar@{->>}@<4pt>[l]^-\tau}$ 
of categories with the common object set $X$ and identity-on-objects functors
between them, such that the map \eqref{eq:q_groupoid} in Example
\ref{ex:SplitEpi_groupoid} is invertible (recall that this holds e.g. if $B$
is a groupoid). \\
\underline{morphisms} are pairs of compatible identity-on-objects functors.
\item[{$\mathsf{Pre}\hspace{.08cm}$}]\hspace{-.38cm} $\mathsf{X}(\mathsf C)$
  whose \\  
\underline{objects} consist of categories $B$ and $Y$ of the common object set
$X$ such that in $Y$ there are no morphisms between non-equal objects; 
an action (cf. Example \ref{ex:SplitEpi_groupoid}) 
$\xymatrix@C=12pt{B \coten X Y \ar[r]^-\triangleright & Y}$ and
an identity-on-objects functor 
$\xymatrix@C=12pt{Y \ar[r]^-\kappa & B}$ such that
\begin{equation} \label{eq:preX_groupoid}
\kappa(b \triangleright y).b=b.\kappa(y)
\end{equation}
for all morphisms $b$ in $B$ and $y$ in $Y$ for which $s(b)=t(y)$.
(If $B$ is a groupoid then \eqref{eq:preX_groupoid} has the equivalent form 
$\kappa(b \triangleright y)=b.\kappa(y).b^{-1}$; so when both $B$ and $Y$ are
groupoids we recover the notion of {\em pre-crossed module} of groupoids in
\cite[Definition 1.2]{BrownIcen}.) \\
\underline{morphisms} are pairs of identity-on-objects functors
$(\xymatrix@C=12pt{B\ar[r]^-\beta & B'},\xymatrix@C=12pt{Y\ar[r]^-\nu & Y'})$
such that $\kappa'\nu=\beta\kappa$ and $\nu(b \triangleright y)=\beta(b)
\triangleright \nu(y)$ for all morphisms $b$ in $B$ and $y$ in $Y$ for which
$s(b)=t(y)$. 
\end{itemize}
\end{example}

\begin{example} \label{ex:CoMon_ReflGraph}
In the setting of Example \ref{ex:CoMon_SplitEpi}, the equivalent categories of Theorem \ref{thm:ReflGraph_vs_PreX} take the following explicit forms. 
\begin{itemize}
\item[{$\mathsf{Refl}$}]\hspace{-.3cm} $\mathsf{GraphMon}_{\mathcal S}(\mathsf C)$ whose \\
\underline{objects} are reflective graphs
$\xymatrix@C=20pt{
B \ar@{ >->}[r]|(.55){\, i\, } &
A \ar@{->>}@<-4pt>[l]_-s  \ar@{->>}@<4pt>[l]^-t}$ 
of bimonoids in $\mathsf M$ subject to the following conditions.
\begin{itemize}
\item[{(a)}] The comultiplication $\delta$ of $A$ satisfies
  $c.s1.\delta=1s.\delta$. 
\item[{(b)}] In terms of the morphism $j$ of \eqref{eq:cotensor}, 
$q:=\xymatrix@C=15pt{
(A \coten B I)B \ar[r]^-{ji} & A^2 \ar[r]^-m &A}$
is invertible.
\end{itemize}
\underline{morphisms} are pairs of bimonoid morphisms 
$(\xymatrix@C=12pt{B \ar[r]^-b & B'},\xymatrix@C=12pt{A \ar[r]^-a & A'})$
such that $s'.a=b.s$, $t'.a=b.t$ and $i'.b=a.i$.
\smallskip

\item[{$\mathsf{Pre}\hspace{.08cm}$}]\hspace{-.38cm} $\mathsf{X}_{\mathcal
  S}(\mathsf C)$ whose \\ 
\underline{objects} consist of a cocommutative bimonoid $B$ and a bimonoid
$Y$ in $\mathsf M$, together with a left $B$-action on $Y$ which makes $Y$
both a left $B$-module monoid and a left $B$-module comonoid, and a bimonoid morphism 
$\xymatrix@C=12pt{Y \ar[r]^-k& B}$ for which the following diagram commutes.
\begin{equation}\label{eq:PreX_CoMon}
\xymatrix@C=10pt{
BY \ar[rr]^-{\delta 1} \ar[d]_-{1k} &&
B^2 Y \ar[rr]^-{1c} &&
BYB \ar[rr]^-{l1} &&
YB \ar[d]^-{k1} \\
B^2 \ar[rrr]_-m &&&
B &&&
B^2 \ar[lll]^-m}
\end{equation}
\underline{morphisms} are pairs of bimonoid morphisms 
$(\xymatrix@C=12pt{B \ar[r]^-b &B'},
\xymatrix@C=12pt{Y \ar[r]^-y &Y'})$
which are compatible with the actions 
$\xymatrix@C=12pt{BY \ar[r]^-l &Y}$ and
$\xymatrix@C=12pt{B'Y' \ar[r]^-{l'} &Y'}$ in the sense that $l'.by=y.l$ and which satisfy $k'.y=b.k$.
\end{itemize}
\end{example}

\begin{remark}\label{eq:preX_Hopf}
Clearly, the equivalent categories of Example \ref{ex:CoMon_ReflGraph} have equivalent full subcategories for whose objects the bimonoid $B$ is a cocommutative Hopf monoid (then condition (b) becomes redundant by Example \ref{prop:SplitEpi_Hopf}). Note that whenever $B$ has an antipode $z$, the commutative diagram \eqref{eq:PreX_CoMon} has an equivalent form
\begin{equation} \label{eq:PreX_Hopf}
\xymatrix{
BY \ar[d]_-l \ar[r]^-{\delta k} &
B^3 \ar[r]^-{1c} &
B^3 \ar[r]^-{mz} &
B^2 \ar[d]^-m \\
Y \ar[rrr]_-k &&&
B}
\end{equation}
occurring in \cite[Definition 12~(iv)]{Villanueva}. Their equivalence follows by the commutativity of the diagrams of Figure \ref{fig:PreX}.
\begin{figure} 
\centering
\begin{sideways}
$\xymatrix@C=15pt@R=45pt{
% 1.1
BY \ar[r]^-{\delta 1} \ar[dd]^-{\delta 1} \ar@/_1.5pc/@{=}[ddddd]_-{\ } &
% 1.2
B^2Y \ar[r]^-{11k} \ar[rd]_-{1c} \ar[dd]^-{\delta 11} &
% 1.3 
B^3 \ar[rrrr]^-{1c} &&&&
% 1.7
B^3  \ar[dd]_-{m1} \\
% 2.3
&& BYB \ar[rrrru]_-{1k1} \ar[d]^-{\delta 11} \ar@{}[rrrr]|-{\eqref{eq:PreX_CoMon}} &&&& \\
% 3.1
B^2Y \ar[r]^-{1\delta 1} \ar[ddd]^-{1\varepsilon 1} &
% 3.2
B^3Y \ar[r]^-{11c} \ar[d]^-{11z1} &
% 3.3
B^2YB \ar[r]^-{1c1} &
% 3.4
BYB^2 \ar[r]^-{l11} &
% 3.5
YB^2 \ar[r]^-{k11} &
% 3.6
B^3 \ar[r]^-{m1} &
% 3.7
B^2 \ar[d]_-{1z}  \\
% 4.2
& B^3Y \ar[r]^-{11c} \ar[d]^-{1m1} &
% 4.3
B^2YB \ar[r]^-{1c1} &
% 4.4
BYB^2 \ar[r]^-{l11} \ar[d]^-{11m} &
% 4.5
YB^2 \ar[r]^-{k11} &
% 4.6
B^3 \ar[r]^-{m1} \ar[d]^-{1m} &
% 4.7
B^2 \ar[dd]_-m \\
% 5.2
& B^2Y \ar[rr]^-{1c} &&
% 5.4
BYB \ar[r]^-{l1} &
% 5.5
YB \ar[r]^-{k1} &
% 5.6
B^2 \ar[rd]^-m \\
% 6.1
BY \ar[ru]^-{1u1} \ar[rrru]_-{11u} \ar[r]_-l &
% 6.2
Y \ar[r]_-k &
% 6.3
B \ar[rrru]_-{1u} \ar@{=}[rrrr] &&&&
% 6.7
B}\qquad
\xymatrix@C=15pt@R=45pt{
% 1.1
BY \ar[r]^-{\delta 1} \ar[d]^-{\delta 1} \ar@/_1.8pc/@{=}[ddddd] &
% 1.2
B^2Y \ar[rr]^-{1c} \ar[d]^-{\delta 11} &&
% 1.4
BYB \ar[rrr]^-{l1} \ar[d]^-{\delta k 1} \ar@{}[rrrddd]|-{\eqref{eq:PreX_Hopf} }&&&
% 1.7
YB \ar[ddd]_-{k1} \\
% 2.1
B^2Y \ar[r]^-{1\delta 1} \ar[rd]_-{1c} \ar[dddd]^-{1\varepsilon 1} &
% 2.2
B^3Y \ar[r]^-{11c} &
% 2.3
B^2YB \ar[r]^-{11k1} \ar[d]^-{1c1} &
B^4 \ar[dd]^-{1c1} \\
% 3.2
& BYB \ar[r]^-{11\delta} \ar[d]^-{1k1} &
% 3.3
BYB^2 \ar[d]^-{1k11} \\
% 4.2
& B^3 \ar[r]^-{11\delta} \ar[dd]^-{11\varepsilon} &
% 4.3
B^4 \ar@{=}[r] &
% 4.4
B^4 \ar[r]^-{11z1} &
% 4.5
B^4 \ar[r]^-{m11} \ar[d]_-{11m} &
% 4.6
B^3 \ar[r]^-{m1} \ar[dd]^-{1m} &
% 4.7 
B^2 \ar[dd]_-m \\
% 5.5
&&&& B^3 \ar[rd]^-{m1} \\
% 6.1
BY \ar[r]_-{1k} &
% 6.2
B^2 \ar[rrru]^-{11u} \ar[rr]_-m &&
% 6.4
B \ar[rr]^-{1u} \ar@/_1.1pc/@{=}[rrr] &&
% 6.6
B^2 \ar[r]^-m &
B}$
\end{sideways}
\caption{Equivalence of \eqref{eq:PreX_CoMon} and \eqref{eq:PreX_Hopf} for Hopf monoids $B$}
\label{fig:PreX}
\end{figure}
\end{remark}

%%%%%%%%%%%%%%%%%%%%%%%%%%%%%  SEC 6 %%%%%%%%%%%%%%%%%%%%%%%%%%%%%%%%%%%%%%%%%%

\section{Relative categories of monoids versus crossed modules}
\label{sec:rel_cat}

Consider again a monoidal admissible class $\mathcal S$ of spans in a monoidal
category $\mathsf C$ for which \cite[Assumption 4.1]{Bohm:Xmod_I} holds. 
Take an object
$\xymatrix@C=20pt{
B \ar@{ >->}[r]|(.55){\, i\, } &
A \ar@{->>}@<-4pt>[l]_-s  \ar@{->>}@<4pt>[l]^-t}$ 
of the category $\mathsf{ReflGraphMon}_{\mathcal S}(\mathcal C)$ of Theorem
\ref{thm:ReflGraph_vs_PreX} such that also 
$\xymatrix@C=12pt{B & \ar[l]_-t A \ar@{=}[r] & A}\in \mathcal S$; that is, the
legs of the cospan 
$\xymatrix@C=12pt{A \ar[r]^-s & B & \ar[l]_-t A}$ are in $\mathcal S$
(hence there exists its $\mathcal S$-relative pullback 
$\xymatrix@C=12pt{A & \ar[l]_-{p_1} A \coten B A \ar[r]^-{p_2} &
A}$). Whenever the morphism
\begin{equation} \label{eq:q2}
q_2:=\xymatrix@C=20pt{
(A \coten B I)A \ar[r]^-{p_A 1} &
A^2 \ar[rr]^-{(1\diagcoten \, i)(i \, \diagcoten 1)} &&
(A \coten B A)^2 \ar[r]^-m &
A\coten B A}
\end{equation}
is invertible, we infer form \cite[Corollary 1.7]{Bohm:Xmod_I} that
there exists at most one monoid morphism $d$ rendering commutative 
\begin{equation} \label{eq:d_exists}
\xymatrix{
A \coten B I \ar[r]^-{p_A} \ar@/_1.2pc/[rrd]_(.4){p_A} &
A \ar[r]^-{1\diagcoten \, i} & 
A\coten B A \ar@{-->}[d]^-d &&
A \ar[ll]_-{i \, \diagcoten 1} \ar@/^1.3pc/@{=}[lld] \\
&& A}
\end{equation}
which is our candidate to serve as the composition morphism of a relative
category. 
By this motivation, in this section we investigate first the condition that
\eqref{eq:q2} is invertible. Assuming so, next we show that whenever the
morphism $d$ of \eqref{eq:d_exists} exists, it makes the object 
$\xymatrix@C=20pt{
B \ar@{ >->}[r]|(.55){\, i\, } &
A \ar@{->>}@<-4pt>[l]_-s  \ar@{->>}@<4pt>[l]^-t}$ 
of $\mathsf{ReflGraphMon}_{\mathcal S}(\mathcal C)$ to an $\mathcal
S$-relative category. Finally, based on Theorem
\ref{thm:ReflGraph_vs_PreX}, we give an equivalent description of the category
of $\mathcal S$-relative categories in the category of monoids in $\mathsf C$,
in terms of crossed modules introduced hereby.

\subsection{Invertibility of some canonical morphisms}

\begin{lemma}\label{lem:hn}
Consider a monoidal admissible class $\mathcal S$ of spans in a monoidal
category $\mathsf C$ for which \cite[Assumption 4.1]{Bohm:Xmod_I} holds. 
For any monoid $B$ in $\mathsf C$ for which
$\xymatrix@C=10pt{B & \ar@{=}[l] B \ar@{=}[r] & B}$ is in $\mathcal S$, for
any span of monoids   
$\xymatrix@C=12pt{B & \ar[l]_-t A \ar[r]^-s & B}$
with legs in $\mathcal S$, and for any natural number $n$, the following
assertions hold. (Recall the convention $A^{\expcoten B 0}:=B$ from
\cite[Corollary 4.6]{Bohm:Xmod_I}.)
\begin{itemize}
\item[{(1)}] There exists the $\mathcal S$-relative pullback 
$$
\xymatrix@C=40pt{
(A \coten B I)B \coten B A^{\expcoten B n} \ar[r]^-{p_{A^{\diagcoten_B n}}} 
\ar[d]_-{p_{(A \diagcoten_B I)B}} &
A^{\expcoten B n}\ar[d]^-{t.p_1} \\
(A \coten B I)B \ar[r]_-{p_I 1} &
B.}
$$
\item[{(2)}] There is a unique morphism $h_n$ rendering commutative
$$
\xymatrix@R=15pt{
(A \coten B I)A^{\expcoten B n} \ar@/^2pc/[rrd]^-{p_I 1}
\ar@{-->}[rd]^-{h_n} \ar[dd]_-{1p_1} \\
& (A \coten B I)B \coten B A^{\expcoten B n} \ar[r]^-{p_{A^{\diagcoten_B n}}} 
\ar[d]_-{p_{(A \diagcoten_B I)B}} &
A^{\expcoten B n}\ar[d]^-{t.p_1} \\
(A \coten B I)A \ar[r]_-{1t} &
(A \coten B I)B \ar[r]_-{p_I 1} &
B.}
$$
\item[{(3)}] For a common section $i$ of $s$ and $t$, consider the morphism
\begin{equation} \label{eq:qn}
q_{n+1}:=\xymatrix@C=18pt{
(A \coten B I)A^{\expcoten B n} \ar[r]^-{p_A1} &
AA^{\expcoten B n} \ar[rrrr]^-{(1\diagcoten \, i \, \diagcoten \cdots \diagcoten
  i)(i \, \diagcoten 1 \diagcoten \cdots \diagcoten 1)} &&&&
(A^{\expcoten B n+1})^2 \ar[r]^-m &
A^{\expcoten B n+1}}
\end{equation}
(it is well-defined by \cite[Proposition 3.5]{Bohm:Xmod_I} and $q_1$ is
equal to $q$ in Theorem \ref{thm:SplitEpi_vs_DLaw}~(b)).
If $q_{n+1}$ is invertible for some $n$, then $q_k$ is invertible
for all $0<k\leq n$.
\item[{(4)}] For a common section $i$ of $s$ and $t$ the following are equivalent.
\begin{itemize}
\item[{(i)}] $h_n$ in part (2) and $q_1$ in part (3) are invertible.
\item[{(ii)}] $q_{n+1}$ in part (3) is invertible.
\end{itemize}
\end{itemize}
\end{lemma}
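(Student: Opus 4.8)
The plan is to establish the four parts in order and then combine (1)--(3) to obtain (4). Parts (1) and (2) are essentially bookkeeping with the relative pullbacks of \cite{Bohm:Xmod_I}. For (1) I would check that both legs of the cospan $(A\coten B I)B \xrightarrow{\,p_I1\,} B \xleftarrow{\,t.p_1\,} A^{\expcoten B n}$ lie in $\mathcal S$ and then invoke \cite[Assumption 4.1]{Bohm:Xmod_I}: the left leg because the leg $p_I$ of $A\coten B I$ lies in $\mathcal S$ together with multiplicativity of $\mathcal S$, and the right leg because $t.p_1$ is one of the legs of $A^{\expcoten B n}$ viewed as an object of the monoidal category of \cite[Corollary 4.6]{Bohm:Xmod_I}. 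For (2) I would observe that the outer boundary of the displayed diagram commutes --- both composites $(A\coten B I)A^{\expcoten B n}\to B$ equal $p_I$ tensored with $t.p_1$ --- and then define $h_n$ as the morphism induced into the relative pullback of (1) by the universal property \cite[Proposition 3.5]{Bohm:Xmod_I}; its uniqueness is forced by joint monomorphy of $p_{(A\diagcoten_B I)B}$ and $p_{A^{\diagcoten_B n}}$.

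The conceptual heart is a single factorisation, valid for every $n$,
$$q_{n+1}=(q_1\morcoten 1).h_n,$$
where $q_1\morcoten 1\colon (A\coten B I)B\coten B A^{\expcoten B n}\to A^{\expcoten B n+1}$ uses the identification $A^{\expcoten B n+1}=A\coten B A^{\expcoten B n}$ and is well defined because $s.q_1=p_I1$, the left square of \eqref{eq:s.q}. To prove this equality I would post-compose both sides with the two joint monics $p_A$ and $p_{A^{\diagcoten_B n}}$ of $A\coten B A^{\expcoten B n}$: the $p_A$-component is $q_1.(1t.1p_1)$ on either side, using $p_{(A\diagcoten_B I)B}.h_n=1t.1p_1$ from part (2); the $p_{A^{\diagcoten_B n}}$-component is $p_I1$ on either side, using $p_{A^{\diagcoten_B n}}.h_n=p_I1$ together with unitality of $m$ against the degenerate coordinates in the definition \eqref{eq:qn} of $q_{n+1}$. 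Since $q_1\morcoten 1$ is invertible with inverse $q_1^{-1}\morcoten 1$ whenever $q_1$ is, this factorisation already yields the implication (i)$\Rightarrow$(ii) of part (4).

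Part (3) is where the genuine work lies, and I would prove it by exhibiting $q_m$ as a retract of $q_{m+1}$ in the arrow category for each $1\le m\le n$; downward iteration from $m=n$ then gives invertibility of $q_n,\dots,q_1$, since a retract of an isomorphism is an isomorphism (explicitly $q_m^{-1}=\rho.q_{m+1}^{-1}.\sigma'$). The retraction data are the evident degeneracy and face maps: on the domains $\sigma$ appends a degenerate last coordinate $i$ in the $A^{\expcoten B\bullet}$-factor while $\rho$ drops the last coordinate, and on the codomains $\sigma'$ appends $i$ while $\rho'$ drops the last factor; all four are well defined into the relevant relative pullbacks by \cite[Proposition 3.5]{Bohm:Xmod_I} precisely because $i$ is a common section of $s$ and $t$. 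The identities $\rho.\sigma=1$ and $\rho'.\sigma'=1$ are immediate, so the crux --- and the main obstacle --- is the commutativity of the two squares $q_{m+1}.\sigma=\sigma'.q_m$ and $\rho'.q_{m+1}=q_m.\rho$. I expect to verify these by post-composing with the coordinate projections of $A^{\expcoten B m+1}$ and chasing the definition of $q$, the degeneracies, and $m$; the computation is intricate but purely formal, amounting to the statement that $q$ commutes with appending and with dropping a terminal degenerate coordinate.

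Finally part (4) combines the above. The direction (i)$\Rightarrow$(ii) is the factorisation of the second paragraph. For (ii)$\Rightarrow$(i), invertibility of $q_{n+1}$ forces invertibility of $q_1$ by part (3) with $k=1$ (the case $n=0$ being trivial, as then $q_{n+1}=q_1$ and $h_0$ is an isomorphism outright); hence $q_1\morcoten 1$ is invertible, and from $q_{n+1}=(q_1\morcoten 1).h_n$ with $q_{n+1}$ and $q_1\morcoten 1$ both invertible we conclude that $h_n=(q_1\morcoten 1)^{-1}.q_{n+1}$ is invertible as well, which completes the equivalence.
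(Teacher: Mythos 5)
Your proposal is correct and follows essentially the same route as the paper: parts (1) and (2) by checking the legs lie in $\mathcal S$ and invoking the universal property, part (3) by exhibiting $q_m$ as a retract of $q_{m+1}$ via the degeneracy $1\diagcoten i$ and the projection dropping the last coordinate (the paper writes the resulting inverse $1p_{1\dots n-1}.q_{n+1}^{-1}.(1\diagcoten\, i)$ explicitly and verifies exactly your two squares), and part (4) via the factorisation $q_{n+1}=(q\morcoten 1).h_n$ checked against the joint monomorphisms $p_1$, $p_{2\dots n+1}$, combined with part (3) for the converse direction. No gaps.
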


\begin{proof} 
(1) By assumption 
$\xymatrix@C=12pt{B & \ar[l]_-t  A \ar@{=}[r] & A}\in \mathcal S$ 
and by the unitality of $\mathcal S$, 
$\xymatrix@C=10pt{I & \ar@{=}[l] I \ar@{=}[r] & I}\in \mathcal S$.
Then by \cite[Lemma 3.4]{Bohm:Xmod_I},  
\begin{equation} \label{eq:t.p1_in_S;pI_in_S}
\xymatrix@C=12pt{B & \ar[l]_-t A & \ar[l]_-{p_1} A^{\expcoten B n}  \ar@{=}[r] & 
A^{\expcoten B n}}\in \mathcal S
%\end{equation} 
\quad \textrm{and}\quad 
%\begin{equation} \label{eq:pI_in_S}
\xymatrix@C=12pt{A \coten B I & \ar@{=}[l] A \coten B I \ar[r]^-{p_I} & 
I}\in \mathcal S.
\end{equation} 
By assumption also 
$\xymatrix@C=10pt{B & \ar@{=}[l] B \ar@{=}[r] & B}\in \mathcal S$
hence by the second assertion in \eqref{eq:t.p1_in_S;pI_in_S} and the
multiplicativity of $\mathcal S$ 
\begin{equation} \label{eq:pI1_in_S}
\xymatrix@C=12pt{(A \coten B I)B & \ar@{=}[l] (A \coten B I)B 
\ar[r]^-{p_I1} & B}\in \mathcal S.
\end{equation} 
The first assertion of \eqref{eq:t.p1_in_S;pI_in_S} and \eqref{eq:pI1_in_S}
say that the legs of  
$\xymatrix@C=12pt{(A \coten B I)B \ar[r]^-{p_I1} & B & 
\ar[l]_-{t.p_1} A^{\expcoten B n}}$ are in $\mathcal S$ hence their $\mathcal
S$-relative pullback exists by assumption.

(2) By \eqref{eq:t.p1_in_S;pI_in_S} and the multiplicativity of $\mathcal S$, 
$$
\xymatrix@C=15pt{
(A\coten B I)B & 
\ar[l]_-{1t} (A\coten B I)A &
\ar[l]_-{1p_1} (A \coten B I)A^{\expcoten B n} \ar[r]^-{p_I 1} &
A^{\expcoten B n}}\in \mathcal S.
$$
Hence by the evident commutativity of the exterior of the diagram in part (2),
universality of the $\mathcal S$-relative pullback in its codomain implies 
the existence of the unique morphism $h_n$. 

(3) For some positive integer $n$ assume that $q_{n+1}$ is invertible. Then so
is $q_n$ with the inverse
\begin{equation}\label{eq:qn^-1}
\xymatrix@C=40pt{
A^{\expcoten B n} \ar[r]^-{1\diagcoten \, i} &
A^{\expcoten B n+1} \ar[r]^-{q_{n+1}^{-1}} &
(A\coten B I)A^{\expcoten B n} \ar[r]^-{1p_{1\dots n-1}} &
(A\coten B I)A^{\expcoten B n-1}.}
\end{equation}
Indeed, \eqref{eq:qn^-1} renders commutative both diagrams
$$
\xymatrix@C=7pt@R=10pt{
(A\coten B I)A^{\expcoten B n-1}\ar@{=}[rr] \ar[rd]_-{1(1\diagcoten \, i)}
\ar[dd]_-{q_n} &&
(A\coten B I)A^{\expcoten B n-1} \\
& (A\coten B I)A^{\expcoten B n} \ar@{=}[rd] \ar[d]_-{q_{n+1}} \\
A^{\expcoten B n} \ar[r]_-{1\diagcoten \, i} &
A^{\expcoten B n+1} \ar[r]_-{q_{n+1}^{-1}} &
(A\coten B I)A^{\expcoten B n} \ar[uu]^(.6){1p_{1\dots n-1}} }
\xymatrix@C=20pt@R=16pt{
A^{\expcoten B n+1} \ar@{=}[rd]
\ar[r]^-{\raisebox{8pt}{${}_{q_{n+1}^{-1}}$}}  &
(A\coten B I)A^{\expcoten B n} \ar[d]^-{q_{n+1}}
\ar[r]^-{\raisebox{8pt}{${}_{1p_{1\dots n-1}}$}}  &
(A\coten B I)A^{\expcoten B n-1} \ar[dd]_-{q_n} \\
& A^{\expcoten B n+1} \ar[rd]^-{p_{1\dots n}} \\
A^{\expcoten B n} \ar[uu]_(.4){1\diagcoten \, i} \ar@{=}[rr] &&
A^{\expcoten B n}} 
$$
The leftmost region of the first diagram commutes by the explicit expression
\eqref{eq:qn} of $q_n$ and $q_{n+1}$, multiplicativity of $1\morcoten i$ and the
functoriality of $\Box$, see \cite[Proposition 3.5~(2)]{Bohm:Xmod_I}.
The rightmost region of the second diagram commutes again by the explicit
expression \eqref{eq:qn} of $q_n$ and $q_{n+1}$ and the multiplicativity of
$p_{1\dots n}$.

(4) Our strategy is to prove that $q_{n+1}$ can be rewritten as
\begin{equation}\label{eq:qBox1.hn}
\xymatrix{
(A\coten B I)A^{\expcoten B n} \ar[r]^-{h_n} &
(A\coten B I)B \coten B A^{\expcoten B n} \ar[r]^-{q\, \diagcoten 1} &
A^{\expcoten B n+1}.}
\end{equation}
Then (i) obviously implies (ii) and in view of part (3) also the opposite
implication holds. 

The occurring morphism $q\morcoten 1$ 
is defined as the unique morphism rendering commutative 
$$
\xymatrix@C=7pt@R=10pt{
(A\coten B I)B \coten B A^{\expcoten B n} \ar[ddd]_-{p_{(A\diagcoten_B I)B}}
\ar@/^1.2pc/[rrrd]^-{p_{A^{\Box_B n}}} \ar@{-->}[rd]^-{q\, \diagcoten 1} \\
& A^{\expcoten B n+1} \ar[rr]_-{p_{2\dots n}} \ar[dd]_-{p_1} &&
A^{\expcoten B n} \ar[dd]^-{t.p_1} \\
\\
(A\coten B I)B \ar[r]_-q &
A \ar[rr]_-s &&
B}
$$
It is well-defined by the commutativity of the first diagram of
\eqref{eq:s.q}; see \cite[Proposition 3.5~(2)]{Bohm:Xmod_I}. The morphism of
\eqref{eq:qBox1.hn} is equal to $q_{n+1}$ by the commutativity of both diagrams 
\begin{equation}\label{eq:hn_1}
\xymatrix@C=20pt@R=10pt{
&& (A\coten B I)B \coten B A^{\expcoten B n} \ar[rrr]^-{q\, \diagcoten 1}
\ar[d]^-{p_{(A\diagcoten_B I)B}} &&&
A^{\expcoten B n+1} \ar[d]^-{p_1} \\
(A\coten B I) A^{\expcoten B n} \ar[r]^-{1p_1} \ar@/_1.7pc/[rrrdd]_-{p_A1} 
\ar@/^1.5pc/[rru]^-{h_n} &
(A\coten B I) A \ar[r]^-{1t} &
(A\coten B I)B \ar[r]_-{p_A 1} \ar@/^1.1pc/[rrr]^-q &
AB \ar[r]_-{1i} &
A^2 \ar[r]_-m &
A \\
&&& A^2 \ar[u]^-{1t} \\
&&& AA^{\expcoten B n} \ar[u]^-{1p_1} 
\ar[r]_-{\raisebox{-8pt}{${}_{(1\diagcoten \, i \, \diagcoten \cdots \diagcoten \, i)(i \, \diagcoten 1)}$}} &
(A^{\expcoten B n+1})^2 \ar[uu]_-{p_1p_1} \ar[r]_-m &
A^{\expcoten B n+1} \ar[uu]_-{p_1}}
\end{equation}
\begin{equation}\label{eq:hn_2}
\xymatrix@C=39pt@R=10pt{
& (A\coten B I)B \coten B A^{\expcoten B n} \ar[rrr]^-{q\, \diagcoten 1}
\ar[d]^-{p_{A^{\Box_B n}}} &&&
A^{\expcoten B n+1} \ar[d]^-{p_{2\dots n}} \\
(A\coten B I) A^{\expcoten B n} \ar[r]^-{p_I 1} \ar@/_1.2pc/[rrd]_-{p_A1} 
\ar@/^1.5pc/[ru]^-{h_n} &
A^{\expcoten B n} \ar[r]_-{u1} \ar@/^1.2pc/@{=}[rrr] &
BA^{\expcoten B n} \ar[r]_-{(i \, \diagcoten \cdots \diagcoten \, i)1}  &
(A^{\expcoten B n})^2 \ar[r]_-m &
A^{\expcoten B n} \\
&& AA^{\expcoten B n} \ar[u]^-{s1} 
\ar[r]_-{\raisebox{-8pt}{${}_{(1\diagcoten \, i \, \diagcoten \cdots \diagcoten \, i)(i \, \diagcoten 1)}$}} &
(A^{\expcoten B n+1})^2 \ar[u]_-{p_{2\dots n}p_{2\dots n}} \ar[r]_-m &
A^{\expcoten B n+1} \ar[u]_-{p_{2\dots n}}}
\end{equation}
whose right verticals are joint monomorphisms.
\end{proof}

\begin{example} \label{ex:hn_groupoid}
In the category $\mathsf C$ of spans over a given set $X$ from Example
\ref{ex:SplitEpi_groupoid}, the morphisms $h_n$ of Lemma \ref{lem:hn}~(2) are
isomorphisms, see the pullback \eqref{eq:hn_groupoid}. Hence for any reflexive
graph 
$\xymatrix@C=20pt{
B \ar@{ >->}[r]|(.55){\, \iota\, } &
A \ar@{->>}@<-4pt>[l]_-\sigma  \ar@{->>}@<4pt>[l]^-\tau}$ 
of categories with common object set $X$ and identity-on-objects functors
between them, all morphisms $\{q_n\}_{n>0}$ in Lemma \ref{lem:hn}~(3)
are invertible if and only if $q_1$ is so; see Lemma \ref{lem:hn}~(3). The
latter condition holds e.g. if $B$ is a groupoid, see Example
\ref{ex:SplitEpi_groupoid}.   
\end{example}

\begin{example} \label{ex:hn_CoMon}
In the context of Example \ref{ex:CoMon_SplitEpi}
%Let $\mathsf C$ be the category of comonoids in a symmetric monoidal category
%$\mathsf M$. Assume that for any comonoid morphisms   
%$\xymatrix@C=12pt{
%A  \ar[r]^-f & B & \ar[l]_-g C}$
%there exists the equalizer \eqref{eq:cotensor} and it is preserved by taking
%the monoidal product with any object. 
%Then 
we know from \cite[Example 4.3]{Bohm:Xmod_I} that 
\cite[Assumption 4.1]{Bohm:Xmod_I} holds for the monoidal admissible class
$\mathcal S$ in \cite[Example 2.3]{Bohm:Xmod_I} and 
\cite[Example 2.7]{Bohm:Xmod_I} of spans in $\mathsf C$. 

In this situation, for any cocommutative comonoid $B$ in $\mathsf M$ and any
comonoid morphism  
$\xymatrix@C=12pt{C \ar[r]^-f & B}$ such that the comultiplication $\delta$ of
$C$ satisfies
$f1.\delta=f1.c.\delta$, there is a unique {\em isomorphism} $h$ rendering
commutative 
$$
\xymatrix@C=10pt@R=10pt{
AC \ar@/^1.2pc/[rrrd]^-{\varepsilon 1} \ar@/_1.2pc/[rddd]_-{1f} \ar@{-->}[rd]^-h \\
& AB \coten B C \ar[rr]^-{p_C} \ar[dd]_-{p_{AB}} &&
C \ar[dd]^-f \\
\\
& AB \ar[rr]_-{\varepsilon 1} &&
B}
$$
with the inverse 
$\xymatrix@C=12pt{AB \coten B C \ar[r]^-{j} &
ABC \ar[r]^-{1\varepsilon 1} &
AC}$
(where $j=p_{AB}p_C.\delta$ is the equalizer of $1\delta 1$ and
$11f1.11\delta$ as in \eqref{eq:cotensor}; and $\varepsilon$ stands for both
counits of $A$ and $B$). Indeed, the following diagrams commute.
$$
\xymatrix{
AB\coten B C \ar[rr]_-\delta \ar[d]_-{p_{AB}} \ar@/^1.1pc/[rrr]^-j &&
(AB\coten B C)^2 \ar[r]_-{p_{AB}p_C} \ar[d]_-{p_{AB} p_{AB}} &
ABC \ar[r]^-{1\varepsilon 1} \ar[d]^-{11f} &
AC \ar[r]^-h \ar[d]^-{1f} &
AB\coten B C  \ar[d]^-{p_{AB}} \\
AB \ar[r]^-{\delta \delta} \ar@/_1.2pc/@{=}[rrrr] &
A^2B^2 \ar[r]^-{1c1} &
(AB)^2  \ar[r]^-{11\varepsilon 1} &
AB^2  \ar[r]^-{1\varepsilon 1} &
AB \ar@{=}[r] &
AB}
$$
$$
\xymatrix@C=38pt{
AB\coten B C \ar[r]_-\delta \ar@/_1.2pc/@{=}[rd] \ar@/^1.1pc/[rr]^-j &
(AB\coten B C)^2 \ar[r]_-{p_{AB}p_C} \ar[d]_-{\varepsilon 1} &
ABC \ar[r]^-{1\varepsilon 1} &
AC \ar[r]^-h \ar[d]^-{\varepsilon 1} &
AB\coten B C  \ar[d]^-{p_C} \\
& AB\coten B C  \ar[rr]_-{p_C}  &&
C\ar@{=}[r] &
C}
$$
\smallskip

$$
\xymatrix@C=32pt{
AC \ar[r]_-{\delta \delta} \ar[d]_-h \ar@/^1.4pc/@{=}[rrrr] &
A^2C^2 \ar[r]_-{1c1} &
(AC)^2 \ar@{=}[r] \ar[d]^-{hh} &
(AC)^2 \ar[d]^-{1f\varepsilon 1} \ar[r]_-{1\varepsilon \varepsilon 1} &
AC \ar@{=}[d] \\
AB\coten B C \ar[rr]^-\delta  \ar@/_1.1pc/[rrr]_-j &&
(AB\coten B C)^2 \ar[r]^-{p_{AB}p_C} &
ABC \ar[r]_-{1\varepsilon 1} &
AC}
$$
%In particular in this category $\mathsf C$ the morphisms $h_n$ of Lemma
%\ref{eq:h_n}~(2) are isomorphisms. Therefore $q_n$ in Lemma \ref{eq:h_n}~(3)
%is an isomorphism for all positive integer $n$ if and only if it is
%invertible for $n=1$.

By \cite[Example 2.8]{Bohm:Xmod_I} there is an induced monoidal admissible
class (also denoted by $\mathcal S$) in the category of monoids in $\mathsf C$
(that is, the category of bimonoids in $\mathsf M$) also satisfying
\cite[Assumption 4.1]{Bohm:Xmod_I} by \cite[Example 4.4]{Bohm:Xmod_I}.  So
whenever the above morphism $f$ is a monoid morphism as well, there is a bimonoid isomorphism $h$ in the diagram, see \cite[Proposition 3.7]{Bohm:Xmod_I}. Consequently, in the category of bimonoids in $\mathsf M$, the morphisms $h_n$ of Lemma
\ref{lem:hn}~(2) are isomorphisms. Therefore $q_n$ in Lemma \ref{lem:hn}~(3) is an isomorphism for all positive integer $n$ if and only if it is invertible for $n=1$; and this holds whenever $B$ is a Hopf monoid, see Proposition \ref{prop:SplitEpi_Hopf}.
\end{example}

\begin{lemma} \label{lem:bn}
Let $\mathcal S$ be a monoidal admissible class  of spans in a monoidal
category $\mathsf C$ for which \cite[Assumption 4.1]{Bohm:Xmod_I} holds and let
$(B,Y,
\xymatrix@C=12pt{Y \ar[r]^-e & I},
\xymatrix@C=12pt{Y \ar[r]^-k & B},
\xymatrix@C=12pt{BY \ar[r]^-x & YB})$
be an object of the category $\mathsf{PreX}_{\mathcal S}(\mathsf C)$ in Theorem
\ref{thm:ReflGraph_vs_PreX} such that  
$\xymatrix@C=12pt{B & \ar[l]_-k Y  \ar@{=}[r] & Y}\in \mathcal S$.
For any natural number $n$ denote by 
$\xymatrix@C=15pt{B^{n+1} \ar[r]^-{m^{(n)}} & B}$ the $n$-times iterated
multiplication (unique by the associativity of $m$; by definition the identity
morphism for $n=0$) and consider the span
\begin{equation} \label{eq:YnB}
\xymatrix@C=35pt{
B & 
\ar[l]_-{m^{(n)}} B^{n+1} &
\ar[l]_-{k\dots k1} Y^n B \ar[r]^-{e\dots e 1} &
B}.
\end{equation} 
For any natural number $n$ the following assertions hold.
\begin{itemize}
\item[{(1)}] The cospan 
$\xymatrix{
YB \ar[r]^-{e1} & B &
\ar[l]_-{m^{(n)}} B^{n+1} &
\ar[l]_-{k\dots k1} Y^n B}$ has its legs in $\mathcal S$ (hence there exists
  its $\mathcal S$-relative pullback $YB \coten B Y^nB$).
\item[{(2)}] There exists a unique morphism $b_{n+1}$ of spans (for the spans
\eqref{eq:YnB}) rendering commutative
$$
\xymatrix@C=10pt@R=10pt{
Y^{n+1}B \ar@/^1.1pc/[rrrd]^-{e1\dots 11} \ar[ddd]_-{1k\dots k1}
\ar@{-->}[rd]^(.55){b_{n+1}} \\
& YB \coten B Y^n B \ar[rr]^-{p_{Y^nB}} \ar[dd]_-{p_{YB}} &&
Y^n B \ar[d]^-{k\dots k1} \\
&&& B^{n+1} \ar[d]^-{m^{(n)}} \\
YB^{n+1} \ar[r]_-{1m^{(n)}} & 
YB \ar[rr]_-{e1} &&
B.}
$$
\item[{(3)}] If $b_{n+1}$ in part (2) is an isomorphism then also $b_k$ is an
  isomorphism for all $0<k\leq n$.  
\item[{(4)}] For the morphism
$$
q_{n+1}\!:=\!\!\!\xymatrix@C=14pt{
(YB \coten B I)(YB)^{\expcoten B n}\, \ar[r]^-{p_{YB}1} &
YB(YB)^{\expcoten B n} 
\ar[rrrr]^-{(1\diagcoten u1 \diagcoten \cdots \diagcoten u1)(u1\diagcoten 1)} 
&&&&
((YB)^{\expcoten B n+1})^2 \ar[r]^-m &
(YB)^{\expcoten B n+1}}
$$
the following diagram commutes
$$
\xymatrix{
Y^{n+1} B \ar[d]_-{f\dots f1} \ar[r]^-{b_{n+1}} &
YB \coten B Y^{n}B \ar[r]^-{1\diagcoten\, b_{n}} &
\cdots 
\ar[r]^-{1\diagcoten\, b_1} &
(YB)^{\expcoten B n+1} \coten B B \ar[d]^-{p_{(YB)^{\expcoten B n+1}}} \\
(YB \coten B I)^{n+1} B \ar[r]_-{1\dots 1q_1} &
(YB \coten B I)^{n} YB \ar[r]_-{1\dots 1q_2} &
\cdots 
\ar[r]_-{q_{n+1}} &
(YB)^{\expcoten B n+1}}
$$
where $f$ is the isomorphism in Theorem \ref{thm:SplitEpi_vs_DLaw}~(c').
\item[{(5)}] $b_{n+1}$ in part (2) is an isomorphism if and only if $q_{n+1}$
  in part (4) is an isomorphism. 
\end{itemize}
\end{lemma}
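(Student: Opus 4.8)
The plan is to deduce the equivalence directly from the commutative diagram of part (4), organising the argument as an induction on $n$. The first thing I would record is that both vertical legs of that diagram are isomorphisms. The left leg $f\dots f1$ is the $(n+1)$-fold monoidal product of the isomorphism $f$ of Theorem \ref{thm:SplitEpi_vs_DLaw}~(c') with the identity morphism of $B$, hence invertible. The right leg $p_{(YB)^{\expcoten B n+1}}$ is the projection of the relative pullback $(YB)^{\expcoten B n+1}\coten B B$, which is a component of the right unit constraint $M\coten B B\cong M$ of the monoidal structure of \cite[Corollary 4.6]{Bohm:Xmod_I} and so is invertible as well. Consequently the top composite of the diagram is invertible if and only if the bottom composite is. In the base case $n=0$ the diagram collapses to the single square $p_{YB}.b_1=q_1.(f1)$ with both verticals invertible, so $b_1$ is invertible exactly when $q_1$ is.

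For the inductive step I would assume the biconditional at all levels $k\le n$ and treat the two implications in turn; recall that the morphism $q_{n+1}$ of part (4) is precisely the instance of Lemma \ref{lem:hn}~(3) attached to the reflexive graph with $A=YB$ and $i=u1$, so that descent result applies here. Assume first that $b_{n+1}$ is invertible. By part (3) the morphisms $b_1,\dots,b_n$ are then invertible too, so by functoriality of the relative pullback (\cite[Proposition 3.5~(2)]{Bohm:Xmod_I}) each factor $1\morcoten b_j$ in the top row is invertible; together with the invertible $p_{(YB)^{\expcoten B n+1}}$ this makes the whole top composite an isomorphism. The induction hypothesis at the levels $1,\dots,n$ converts the invertibility of $b_1,\dots,b_n$ into that of $q_1,\dots,q_n$, whence the entire part of the bottom composite standing to the right of $q_{n+1}$ (the factors $1\dots1 q_j$ with $j\le n$ and the leg $f\dots f1$) is invertible; cancelling it from the equality top $=$ bottom isolates $q_{n+1}$ as a composite of isomorphisms. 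The converse is symmetric: from $q_{n+1}$ invertible, Lemma \ref{lem:hn}~(3) gives $q_1,\dots,q_n$ invertible, so the bottom composite is an isomorphism; the induction hypothesis then yields $b_1,\dots,b_n$ invertible, and cancelling the invertible block $p_{(YB)^{\expcoten B n+1}}.(1\morcoten b_1)\dots(1\morcoten b_n)$ standing to the left of $b_{n+1}$ in the top composite shows $b_{n+1}$ invertible.

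I expect the only genuinely delicate points to be the identification of the right vertical leg with the unit constraint $M\coten B B\cong M$, and the bookkeeping of the two long chains of composites: one must check that the isomorphisms to be cancelled really do all sit on a single side of the morphism to be isolated, so that after cancellation exactly $q_{n+1}$ (respectively $b_{n+1}$) remains. Everything else is formal, resting on functoriality of $\coten B$ and on the two descent statements in part (3) and Lemma \ref{lem:hn}~(3).
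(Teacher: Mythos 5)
Your argument for part (5) is correct and is essentially the one the paper gives: both rest on the commutative square of part (4), on the descent statement of part (3) for the $b$'s and of Lemma \ref{lem:hn}~(3) for the $q$'s, and on the invertibility of the two vertical legs ($f\dots f1$ by Theorem \ref{thm:SplitEpi_vs_DLaw}~(c'), and $p_{(YB)^{\expcoten B n+1}}$ as a component of the unit constraint, cf.\ \cite[Proposition 3.6~(1)]{Bohm:Xmod_I}). The paper climbs one level at a time, reading off from the diagram at level $k$ a relation expressing $b_{k+1}$ through $q_{k+1}$ and already-known isomorphisms, rather than cancelling a whole block in a single induction; that is only a difference of bookkeeping, and your identification of the $q_{n+1}$ of part (4) with the instance of Lemma \ref{lem:hn} for $A=YB$, $i=u1$ is exactly what the paper uses.

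The genuine gap is that the statement to be proved is the whole lemma, and parts (1)--(4) are nowhere established: you use (3) and (4) as black boxes and (1), (2) implicitly. Part (4) is where the real work lies: one must verify that the two long composites around the square agree at all, which the paper does by induction on $n$, reducing to the identity $\widetilde b_{n+1}=q_{n+1}.f\widetilde b_n$ checked by the two large diagram chases of Figure \ref{fig:b-q_induction} after post-composing with the joint monomorphisms $p_1$ and $p_{2\dots n+1}$. Likewise part (1) needs the multiplicativity of $\mathcal S$ together with condition (POST); part (2) needs the universality of the relative pullback applied to a span that must first be shown to lie in $\mathcal S$, plus a separate check that $b_{n+1}$ is a morphism of the spans \eqref{eq:YnB}; and part (3) needs the intertwining identities $b_{n+1}.1\dots 1u1=(1\morcoten 1\dots 1u1).b_n$ and $b_n.1\dots 1m1=(1\morcoten 1\dots 1m1).b_{n+1}$, from which the explicit inverse of $b_n$ is assembled. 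None of this is merely formal, and without it the proof of the lemma is incomplete, even though the deduction of (5) from (1)--(4) that you do give is sound.
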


\begin{proof}
(1) By definition the first two spans in
\begin{equation} \label{eq:lembn_1}
\xymatrix@C=12pt{Y & \ar@{=}[l] Y \ar[r]^-{e} & I} \qquad
\xymatrix@C=12pt{B & \ar@{=}[l] B \ar@{=}[r] & B} \qquad
\xymatrix@C=12pt{B & \ar[l]_-k Y \ar@{=}[r] & Y} \qquad
\xymatrix@C=12pt{YB & \ar@{=}[l] YB \ar[r]^-{e1} & B}
\end{equation}
belong to $\mathcal S$ hence so does the last one by the multiplicativity of
$\mathcal S$. 
Again, by definition the second and the third spans of \eqref{eq:lembn_1}
belong to $\mathcal S$ hence by the multiplicativity of $\mathcal S$ so does
the first one in \begin{equation} \label{eq:lembn_2}
\xymatrix@C=10pt{B^{n+1} && \ar[ll]_-{k\dots k1} Y^nB \ar@{=}[r] & Y^nB}
\qquad
\xymatrix@C=10pt{
B && \ar[ll]_-{m^{(n)}} B^{n+1} 
&&
\ar[ll]_-{k\dots k1} Y^nB \ar@{=}[r] & Y^nB.}
\end{equation}
Then the second span of \eqref{eq:lembn_2} is in $\mathcal S$ by (POST).

(2) Since the first span of \eqref{eq:lembn_1} and the second span of
\eqref{eq:lembn_2} are in $\mathcal S$, the multiplicativity of $\mathcal S$
implies that so is
$$
\xymatrix{
YB && \ar[ll]_-{1m^{(n)}} YB^{n+1} 
&&
\ar[ll]_-{1k\dots k1} Y^{n+1}B \ar[r]^-{e1\dots 11} & Y^nB.}
$$
So by the evident commutativity of the exterior of the diagram of part (2) the
stated morphism $b_{n+1}$ exists. It is a morphism of spans (for the spans
\eqref{eq:YnB}) by the commutativity of the following diagrams.
$$
\xymatrix{
Y^{n+1}B \ar[rr]^-{e\dots e1} \ar[d]_-{b_{n+1}} \ar[rd]^-{e1\dots 1} &&
B \ar@{=}[d] \\
YB\coten B Y^nB \ar[r]_-{p_{Y^nB}} &
Y^nB \ar[r]_-{e\dots e1} &
B}\quad 
\xymatrix{
Y^{n+1}B \ar[r]^-{1k\dots k1} \ar[d]_-{b_{n+1}} &
YB^{n+1} \ar[r]^-{k1\dots 1} \ar[d]^-{1m^{(n)}} &
B^{n+2} \ar[r]^-{m^{(n+1)}} \ar[d]^-{1m^{(n)}} &
B \ar@{=}[d] \\
YB\coten B Y^nB \ar[r]_-{p_{YB}} &
YB \ar[r]_-{k1} &
B^2 \ar[r]_-m & 
B}
$$

(3) Since for a positive integer $n$, $\xymatrix{Y^{n-1}B \ar[r]^-{1\dots 1u1}
& Y^nB}$ is a morphism between the spans of \eqref{eq:YnB}, the morphism in
the top row of the following diagram is well-defined by \cite[Proposition
3.5]{Bohm:Xmod_I}.  
$$
\xymatrix@R=15pt@C=10pt{
&& YB\coten B Y^{n-1}B \ar[rd]^-{p_{YB}} 
\ar[r]^-{\raisebox{8pt}{${}_{1\diagcoten 1\dots 1u1}$}} &
YB\coten B Y^nB \ar[d]^-{p_{YB}} \\
Y^nB \ar@{=}[r] \ar@/^1.2pc/[rru]^-{b_n} \ar@/_1.2pc/[rdd]_-{1\dots 1u1} &
Y^n B \ar[r]^-{1k\dots k1} &
YB^n \ar[r]^(.45){1m^{(n-1)}} &
YB \\
&& YB^{n+1} \ar[u]^-{1\dots 1m1} \ar[ru]_-{1m^{(n)}}\\
& Y^{n+1} B \ar[rr]_-{b_{n+1}} \ar[ru]_-{1k\dots k1} \ar[uu]_-{1\dots 1m1} &&
YB\coten B Y^nB \ar[uu]_-{p_{YB}}}
\xymatrix@R=30pt@C=12pt{
& YB\coten B Y^{n-1}B \ar[d]^-{p_{Y^{n-1}B}} 
\ar[r]^-{\raisebox{8pt}{${}_{1\diagcoten 1\dots 1u1}$}} &
YB\coten B Y^nB \ar[d]^-{p_{Y^nB}} \\
Y^nB \ar[r]^-{e1\dots 11} 
\ar@/^1.2pc/[ru]^-{b_n} \ar@/_1.2pc/[rd]_-{1\dots 1u1} &
Y^{n-1} B \ar[r]^-{1\dots 1u1} &
Y^nB \\
& Y^{n+1} B \ar[r]_-{b_{n+1}} \ar[ru]^-{e1\dots 1} &
YB\coten B Y^nB \ar[u]_-{p_{Y^nB}}}
$$
By their commutativity we infer $b_{n+1}.1\dots 1u1=(1\morcoten 1\dots 1u1).b_n$ 
Similarly, since for $n>0$ also $\xymatrix{Y^nB \ar[r]^-{1\dots 1m1} &
Y^{n-1}B}$ is a morphism between the spans of \eqref{eq:YnB}, the morphism in
the top row of the following diagram is well-defined by 
\cite[Proposition 3.5]{Bohm:Xmod_I}. 
$$
\xymatrix@R=15pt@C=10pt{
& YB\coten B Y^nB \ar[rr]^-{1\diagcoten 1\dots 1m1} \ar[rrd]^-{p_{YB}} &&
YB\coten B Y^{n-1}B \ar[d]^-{p_{YB}} \\
Y^{n+1}B \ar[r]^-{1k\dots k1}
\ar@/^1.2pc/[ru]^-{b_{n+1}} \ar@/_1.7pc/[rrdd]_-{1\dots 1m1} &
YB^{n+1} \ar[rr]^-{1m^{(n)}} \ar@/_.9pc/[rd]_-{11\dots 1m1} &&
YB \\
&& YB^n \ar@/_1pc/[ru]_(.45){1m^{(n-1)}} \\
&& Y^nB \ar[u]_-{1k\dots k1} \ar[r]_-{b_n} &
YB\coten B Y^{n-1}B \ar[uu]_-{p_{YB}}}
\xymatrix@R=30pt@C=10pt{
& YB\coten B Y^nB  \ar[d]^-{p_{Y^nB}} 
\ar[r]^-{\raisebox{8pt}{${}_{1\diagcoten 1\dots 1m1}$}} &
YB\coten B Y^{n-1}B \ar[d]^-{p_{Y^{n-1}B}} \\
Y^{n+1}B \ar[r]^-{e1\dots 1}
\ar@/^1.2pc/[ru]^-{b_{n+1}} \ar@/_1.5pc/[rd]_-{1\dots 1m1} &
Y^nB \ar[r]^-{1\dots 1m1} &
Y^{n-1}B \\
& YB^n \ar[ru]^(.45){e1\dots 1}  \ar[r]_-{b_n} &
YB\coten B Y^{n-1}B \ar[u]_-{p_{Y^{n-1}B}}}
$$
By their commutativity, $b_n.1\dots 1m1=(1\morcoten 1\dots 1m1).b_{n+1}$. 
It follows from these identities and the unitality of the monoid $Y$ that
whenever $b_{n+1}$ is invertible then so is $b_n$ with the inverse
$$
\xymatrix@C=40pt{
YB\coten B Y^{n-1}B \ar[r]^-{1\diagcoten 1\dots 1u1} &
YB\coten B Y^nB \ar[r]^-{b_{n+1}^{-1}} &
Y^{n+1}B \ar[r]^-{1\dots 1m1} &
Y^nB.}
$$

(4) We proceed by induction in $n$. For $n=0$ the diagram in the claim reduces
to 
$$
\xymatrix{
YB \ar[r]^-{b_1} \ar[d]_-{f1} \ar@{=}[rd] &
YB \coten B B \ar[d]^-{p_{YB}} \\
(YB \coten B I)B \ar[r]_-{q_1} &
YB}
$$
whose upper half commutes by construction (see part (2)) and the lower half
commutes since $f1$ and $q_1$ are mutual inverses (see the proof of Theorem
\ref{thm:SplitEpi_vs_DLaw}). 

%Assume that the claim holds for some $n-1\geq 0$; that is 
%$
%\widetilde b_n = \widetilde q_n.(f\dots f1)
%$
%where we denoted 
For any positive value of $n$, denote the top-right path in the diagram of the
claim by $\widetilde b_{n+1}$ and the bottom row by $\widetilde q_{n+1}$. Then
the diagram takes the form
$$
\xymatrix@R=15pt{
Y^{n+1}B \ar[rr]^-{\widetilde b_{n+1}} \ar[d]_-{f1\dots 11} &&
(YB)^{\expcoten B n+1} \ar@{=}[dd] \\
(YB \coten B I)Y^nB \ar[d]_-{1f\dots f1} \ar[rd]^-{1\widetilde b_n} \\
(YB \coten B I)^{n+1} B \ar[r]^-{1\widetilde q_n}
\ar@/_1.1pc/[rr]_-{\widetilde q_{n+1}}&
(YB \coten B I)(YB)^{\expcoten B n} \ar[r]^-{q_{n+1}} &
(YB)^{\expcoten B n+1}.}
$$
The region at the bottom left corner commutes if the claim holds for $n-1$;
and the commutativity of the large region is proven in Figure
\ref{fig:b-q_induction}.  
\begin{figure} 
\centering
\begin{sideways}
$\xymatrix@C=15pt@R=15pt{
% 1.4
&&& (YB \coten B I)(YB)^{\expcoten B n} \ar[r]_-{p_{YB}1} \ar[d]^-{1p_1}
\ar@/^1.2pc/[rrrr]^-{q_{n+1}} & 
% 1.5
YB(YB)^{\expcoten B n} \ar[d]_-{11p_1}
\ar[r]_-{\raisebox{-8pt}{${}_{
(1\diagcoten u1\diagcoten \dots \diagcoten u1)(u1\diagcoten 1)}$}} &
% 1.6
((YB)^{\expcoten B {n+1}})^2 \ar[rr]_-m \ar[dd]^-{p_1p_1} &&
% 1.8 
(YB)^{\expcoten B {n+1}} \ar[dddd]^-{p_1} \\
% 2.3
&& (YB \coten B I)(YB \coten B Y^{n-1}B) \ar[r]^-{1p_{YB}}
%\ar[ru]^-{1(1\morcoten \widetilde b_{n-1})} 
& 
% 2.4
(YB \coten B I)YB\ar[d]^-{1k1} &
% 2.5
(YB)^2 \ar[d]_-{11k1} \\
% 3.4
&&& (YB \coten B I)B^2 \ar[d]^-{1m} &
% 3.5
YB^3\ar[d]_-{11m} &
% 3.6
(YB)^2 \ar[rd]^-{1x1} \\
% 4.1
Y^{n+1}B  \ar@/^4pc/[rrruuu]^-{f\widetilde b_n}
\ar[r]^(.6){\raisebox{8pt}{${}_{11k\dots k1}$}}
\ar@/_1.5pc/[rrrdd]^-{b_{n+1}} \ar@/^1pc/[rruu]_-{fb_n} 
\ar@/_5pc/[rrrrrrrdd]_-{\widetilde b_{n+1}}&
% 4.2
Y^2B^n \ar[r]^-{1k1\dots 1} \ar@/^1.2pc/[rruu]^(.3){f1 m^{(n-1)}} &
% 4.3
YB^{n+1}  \ar@/^1pc/[ru]^-{f1 m^{(n-1)}}
\ar@/_1pc/[rd]_-{1m^{(n)}} &
% 4.4
(YB \coten B I)B \ar[r]^-{p_{YB}1} \ar[rrrrd]^-{q_1} &
% 4.5
YB^2 \ar[ru]^-{11u1} &&
% 4.7
Y^2B^2 \ar[rd]^-{mm} \\
% 5.4
&&&
YB \ar@{=}[rrrr] \ar[u]_-{f1} &&&&
% 5.8
YB \\
% 6.4
&&& YB \coten B Y^nB \ar[rrrr]^-{1\diagcoten\, \widetilde b_n}
\ar[u]^-{p_{YB}} &&&&
% 6.8
(YB)^{\expcoten B {n+1}} \ar[u]_-{p_1} 
\\
\\
\\
%}$ $\xymatrix@C=20pt@R=15pt{
% 1.4
&&& (YB \coten B I)(YB)^{\expcoten B n} \ar[r]_-{p_{YB}1} \ar[d]^-{p_I 1}
\ar@/^1.4pc/[rrrr]^-{q_{n+1}} & 
% 1.5
YB(YB)^{\expcoten B n} \ar[d]_-{e11}
\ar[rr]_-{
%\raisebox{-8pt}{${}_{
(1\diagcoten u1\diagcoten \dots \diagcoten u1)(u1\diagcoten 1)}
%$}} 
&&
% 1.7
((YB)^{\expcoten B {n+1}})^2 \ar[r]_-m \ar[d]^-{p_{2\dots n+1}p_{2\dots n+1}} &
% 1.8 
(YB)^{\expcoten B {n+1}} \ar[d]^-{p_{2\dots n+1}} \\
% 2.1
Y^{n+1}B \ar[rr]^-{e1\dots 11} \ar@/^1.1pc/[rrru]^-{f\widetilde b_n}
\ar@/_1pc/[rrrd]^-{b_{n+1}}
\ar@/_4pc/[rrrrrrrd]_-{\widetilde b_{n+1}} &&
% 2.3
Y^nB \ar[r]^-{\widetilde b_n} &
% 2.4
(YB)^{\expcoten B n} \ar[r]^-{u1} \ar@/_1.2pc/@{=}[rrrr] &
% 2.5
B (YB)^{\expcoten B n} \ar[rr]^-{(u1\diagcoten \cdots \diagcoten u1)1}  &&
% 2.7
((YB)^{\expcoten B n})^2 \ar[r]^-m &
% 2.8
(YB)^{\expcoten B n} \\
% 3.4
&&& YB \coten B Y^nB \ar[rrrr]^-{1\diagcoten\, \widetilde b_n}
\ar[lu]_(.4){p_{Y^nB}} &&&&
% 3.8
(YB)^{\expcoten B {n+1}} \ar[u]_-{p_{2\dots n+1}}}$
\end{sideways}
\caption{Proof of 
$\widetilde b_{n+1}=q_{n+1}.f \widetilde b_n$
}
\label{fig:b-q_induction}
\end{figure}

(5) By Theorem \ref{thm:SplitEpi_vs_DLaw} $q_1$ is an isomorphism without any
further assumption; it is the inverse of the isomorphism
$\xymatrix@C=15pt{YB \ar[r]^-{f1} &(YB \coten B I)B}$.
Also $b_1$ is an isomorphism; the inverse of the isomorphism
$\xymatrix@C=12pt{YB \coten B B \ar[r]^-{p_{YB}} & YB}$ in 
\cite[Proposition 3.6~(1)]{Bohm:Xmod_I}. 

Assume that $b_l$ is iso for some $l>1$. 
Take the diagram of part (4) for $n=1$; it says $b_2=q_2.f11$. Since $f$
is an isomorphism by definition and $b_2$ is an isomorphism by part (3), also
$q_2$ is an isomorphism. If $l=2$ then this completes the proof.
If $l>2$ then take next the diagram of part (4) for $n=2$; it says 
$(1\morcoten b_2).b_3=q_3.1q_2.ff11$. All of the occurring morphisms but $q_3$
are known to be isomorphisms proving that so is $q_3$.
Repeating this reasoning for all $n\leq l$ we conclude that $q_n$ is an
isomorphism for all $0<n\leq l$.

The opposite implication is proven by the same steps. Assume that $q_l$ is iso
for some $l>1$.  
Take the diagram of part (4) for $n=1$; it says $b_2=q_2.f11$. Since $f$
is an isomorphism by definition and $q_2$ is an isomorphism by Lemma
\ref{lem:hn}~(3), also $b_2$ is an isomorphism. If $l=2$ then this completes
the proof. 
If $l>2$ then take next the diagram of part (4) for $n=2$; it says 
$(1\morcoten b_2).b_3=q_3.1q_2.ff11$. All of the occurring morphisms but $b_3$
are known to be isomorphisms proving that so is $b_3$.
Repeating this reasoning for all $n\leq l$ we conclude that $b_n$ is an
isomorphism for all $0<n\leq l$.
\end{proof}

\begin{example} \label{ex:bn_groupoid}
Take $\mathcal S$ to be the (monoidal and admissible) class of all spans in the monoidal category $\mathsf C$ of spans over a given set. For any object of the category $\mathsf{ReflGraphMon}(\mathsf C)$ of Example \ref{ex:ReflGraph_groupoid} and for any positive integer $n$, the morphism $b_n$ in Lemma \ref{lem:bn}~(2) in invertible, see the pullback \eqref{eq:hn_groupoid}. 
\end{example}

\begin{example} \label{ex:bn_CoMon}
In the setting of Example \ref{ex:CoMon_SplitEpi}
we know from Example \ref{ex:hn_CoMon} that the morphism $q_n$ of Lemma
\ref{lem:hn}~(3) is invertible for any positive integer $n$ and for any object
of $\mathsf{ReflGraphMon}_{\mathcal S}(\mathsf C)$. By the isomorphism of
Theorem \ref{thm:ReflGraph_vs_PreX} this means that the morphism $q_n$ of
Lemma \ref{lem:bn}~(4) is invertible for any object of
$\mathsf{PreX}_{\mathcal S}(\mathsf C)$. Then also the morphism $b_n$ of Lemma
\ref{lem:bn}~(2) is invertible by Lemma \ref{lem:bn}~(5). Since the diagram 
$$
\xymatrix@C=20pt{
Y^nB \ar@{=}[dd] \ar[rd]_-{\delta_Y \delta_{Y^{n-1}B}} \ar[rrrrd]^(.6){\delta_Y1\dots 11} \ar@{=}[rrrrr] &&&&& 
Y^n B  \\
& Y^2(Y^{n-1}B)^2 \ar[rrr]_(.4){11\varepsilon_{Y^{n-1}B}1\dots 11} \ar[d]_-{1c_{Y,Y^{n-1}B}1\dots 11} &&&
Y^{n+1}B \ar[ru]^(.45){1\varepsilon_Y1\dots 11} \\
Y^nB \ar[r]_-{\delta_{Y^nB}} \ar[d]_-{b_n} &
(Y^nB)^2 \ar[r]_-{1k\dots k11\dots 11} \ar[d]^-{b_nb_n} \ar[rrru]^-(.2){1\varepsilon_{Y^{n-1}B}1\dots 11}&
YB^nY^nB \ar[rr]_-{1m^{(n-1)}1\dots 1} &&
YBY^nB \ar[u]^-{1\varepsilon_B1\dots 11} \ar[rd]_-{11\varepsilon_Y1\dots 11} \\
YB \coten B Y^{n-1}B\ar[r]^-\delta \ar@/_1.4pc/[rrrrr]_-j &
(YB \coten B Y^{n-1}B)^2 \ar[rrrr]^-{p_{YB}p_{Y^{n-1}B}} &&&&
YBY^{n-1}B \ar[uuu]_-{1\varepsilon_B1\dots 11} }
$$
commutes, we conclude that the morphism in its bottom-right path --- involving the equalizer $j$ as in \eqref{eq:cotensor} --- is the inverse of $b_n$. 
\end{example}

\begin{lemma} \label{lem:b2-u}
Let $\mathcal S$ be a monoidal admissible class  of spans in a monoidal
category $\mathsf C$ for which \cite[Assumption 4.1]{Bohm:Xmod_I} holds and let
$(B,Y,
\xymatrix@C=12pt{Y \ar[r]^-e & I},
\xymatrix@C=12pt{Y \ar[r]^-k & B},
\xymatrix@C=12pt{BY \ar[r]^-x & YB})$
be an object of the category $\mathsf{PreX}_{\mathcal S}(\mathsf C)$ in Theorem
\ref{thm:ReflGraph_vs_PreX} such that  
$\xymatrix@C=12pt{B & \ar[l]_-k Y  \ar@{=}[r] & Y}\in \mathcal S$.
For any positive integer $n$ the morphism $b_n$ in Lemma \ref{lem:bn}~(2) satisfies the following identities.
\begin{itemize}
\item[{(1)}] $b_2.u11=u1\morcoten 1$
\item[{(2)}] $b_2.1u1=1\morcoten u1$
\end{itemize}
\end{lemma}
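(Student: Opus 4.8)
The plan is to verify both identities by composing with the two jointly monic projections of the relative pullback $YB \coten B YB$ --- namely $p_{YB}$ onto the left factor and $p_{Y^nB}$ (with $n=1$) onto the right factor --- and checking agreement after each; the joint monicity from \cite[Proposition 3.5]{Bohm:Xmod_I} then yields the stated equalities. The inputs are the defining equations of $b_2$ from part (2), $p_{Y^nB}.b_2=e11$ and $p_{YB}.b_2=1m.(1k1)$, together with the trivial-cotensor isomorphisms $B\coten B YB\cong YB$ and $YB\coten B B\cong YB$ of \cite[Proposition 3.6~(1)]{Bohm:Xmod_I}, under which the domains of $u1\morcoten 1$ and $1\morcoten u1$ are identified with $YB$.

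First I would record the two projections of the right-hand sides by functoriality of $\Box$ (\cite[Proposition 3.5~(2)]{Bohm:Xmod_I}). Since $e1.u1=1$, the domain of $u1\morcoten 1$ is $B\coten B YB\cong YB$ via the right projection, and under this identification its missing (left) projection is read off as the cospan leg $m.(k1)$; hence $p_{YB}.(u1\morcoten 1)=u1.m.(k1)$ and $p_{Y^nB}.(u1\morcoten 1)=1$. Dually, since $m.(k1).u1=1$, the domain of $1\morcoten u1$ is $YB\coten B B\cong YB$ via the left projection, with missing (right) projection the leg $e1$; hence $p_{YB}.(1\morcoten u1)=1$ and $p_{Y^nB}.(1\morcoten u1)=u1.e1$.

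For (1) I would then compute $p_{YB}.b_2.u11=1m.(1k1).u11=u1.m.(k1)$, where the unit inserted by $u11$ survives as the $Y$-leg while bifunctoriality reassembles the remaining $B$-leg, and $p_{Y^nB}.b_2.u11=e11.u11=1$, using that $e$ is a monoid morphism so that $e.u=1$. These coincide with the projections of $u1\morcoten 1$. For (2) I would compute $p_{YB}.b_2.1u1=1m.(1k1).1u1=1$, now using the unitality of $k$ (so $k.u=u$) and the left unit law $m.u1=1$, and $p_{Y^nB}.b_2.1u1=e11.1u1=u1.e1$ by bifunctoriality. These coincide with the projections of $1\morcoten u1$, finishing (2).

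All steps are routine; the only delicate point --- and the main pitfall to avoid --- is the bookkeeping of the two trivial-cotensor identifications, under which the unnamed projection of each domain must be correctly recognized as the appropriate cospan leg ($m.(k1)$ for $B\coten B YB$ and $e1$ for $YB\coten B B$). Once these are fixed, each identity reduces to matching a pair of projections, and no property of the distributive law $x$ beyond the unit laws for $e$, $k$ and $m$ is required.
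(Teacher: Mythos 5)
Your proposal is correct and matches the paper's own argument: the paper likewise proves both identities by composing with the jointly monic projections $p_1=p_{YB}$ and $p_2$ of $YB\coten_B YB$, using $p_1.b_2=1m.1k1$ and $p_2.b_2=e11$ together with the unit laws for $e$, $k$ and $m$, and identifying the projections of $u1\morcoten 1$ and $1\morcoten u1$ as $(u1.m.k1,\,1)$ and $(1,\,u1.e1)$ respectively. No gaps.
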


\begin{proof}
Assertion (1) follows by the commutativity of the diagrams 
$$
\xymatrix@C=15pt@R=10pt{
&& Y^2B \ar[r]^-{b_2} \ar[d]^-{1k1} &
YB \coten B YB \ar[dd]^-{p_1} \\
&& YB^2 \ar@/^1pc/[rd]^-{1m} \\
YB \ar[r]^-{k1} \ar@/^1.2pc/[rruu]^-{u11} \ar@/_1.2pc/[rrrd]_-{u1\diagcoten 1} &
B^2 \ar@/^.9pc/[ru]^-{u11} \ar[r]^-m & 
B \ar[r]^-{u1} & 
YB \\
&&& YB \coten B YB \ar[u]_-{p_1}}
\qquad
\xymatrix@C=15pt@R=24pt{
& Y^2B \ar[r]^-{b_2} \ar[rd]_-{e11} &
YB \coten B YB \ar[d]^-{p_2} \\
YB \ar@{=}[rr]\ar@/^1pc/[ru]^-{u11} \ar@/_1.2pc/[rrd]_-{u1\diagcoten 1} && 
YB \\
&& YB \coten B YB \ar[u]_-{p_2}}
$$
and part (2) follows by the commutativity of
$$
\xymatrix@C=15pt{
& Y^2B \ar[r]^-{b_2} \ar[d]^-{1k1} &
YB \coten B YB \ar[d]^-{p_1} \\
YB \ar[r]^-{1u1} \ar@/^1pc/[ru]^-{1u1} \ar@/_1.2pc/[rrd]_-{1\diagcoten u1} \ar@/_1.1pc/@{=}[rr] &
YB^2  \ar[r]^-{1m} & 
YB \\
&& YB \coten B YB \ar[u]_-{p_1}}
\qquad
\xymatrix@C=15pt{
& Y^2B \ar[r]^-{b_2} \ar[rd]_-{e11} &
YB \coten B YB \ar[d]^-{p_2} \\
YB \ar[r]^-{e1} \ar@/^1pc/[ru]^-{1u1} \ar@/_1.2pc/[rrd]_-{1\diagcoten u1} &
B \ar[r]^-{u1} & 
YB \\
&& YB \coten B YB. \ar[u]_-{p_2}}
$$
\end{proof}

\subsection{The composition morphism of a relative category of monoids}

\begin{proposition} \label{prop:relcat-monoid}
Consider a monoidal admissible class $\mathcal S$ of spans in a monoidal
category $\mathsf C$ such that \cite[Assumption 4.1]{Bohm:Xmod_I} holds. 
Take an object
$\xymatrix@C=20pt{
B \ar@{ >->}[r]|(.55){\, i\, } &
A \ar@{->>}@<-4pt>[l]_-s  \ar@{->>}@<4pt>[l]^-t}$ 
of the category $\mathsf{ReflGraphMon}_{\mathcal S}(\mathsf C)$ of Theorem
\ref{thm:ReflGraph_vs_PreX} such that the following properties hold.
\begin{itemize}
\item $\xymatrix@C=12pt{B& \ar[l]_-t A \ar@{=}[r] & A}$ belongs to $\mathcal S$
\item the morphism $q_3$ of Lemma \ref{lem:hn}~(3) is invertible.
\end{itemize}
The following assertions hold.
\begin{itemize}
\item[{(1)}] There is at most one monoid morphism $d$ rendering commutative
$$
\xymatrix@C=35pt@R=20pt{
A \ar[r]^-{1\diagcoten \, i} \ar@/_1.2pc/@{=}[rd] &
A \coten B A  \ar@{-->}[d]^-d &
A. \ar[l]_-{i \, \diagcoten 1} \ar@/^1.2pc/@{=}[ld] \\
& A}
$$
\item[{(2)}] The monoid morphism $d$ of part (1) exists if and only if the following
  diagram commutes (recall that $q_2$ is invertible by Lemma \ref{lem:hn}~(3)). 
$$
\xymatrix@C=35pt{
A(A \coten B I)\ar[r]^-{1p_A} \ar[d]_-{1p_A} &
A^2 \ar[r]^-{(i \, \diagcoten 1)(1\diagcoten \, i)} &
(A \coten B A)^2 \ar[r]^-m &
A \coten B A \ar[r]^-{q_2^{-1}} &
(A \coten B I)A \ar[d]^-{p_A 1} \\
A^2 \ar[rr]_-m &&
A &&
A^2 \ar[ll]^-m}
$$
Moreover, in this case $d$ is equal to
$\xymatrix@C=15pt{
A\coten B A \ar[r]^-{q_2^{-1}} &
(A\coten B I)A \ar[r]^-{p_A1} &
A^2 \ar[r]^-m &
A.}$
\item[{(3)}] Whenever the monoid morphism $d$ of part (1) exists, 
$\xymatrix@C=20pt{
B \ar@{ >->}[r]|(.55){\, i\, } &
A \ar@{->>}@<-4pt>[l]_-s  \ar@{->>}@<4pt>[l]^-t &
\ar[l]_-d A \coten B A}$
is an $\mathcal S$-relative category in the category of monoids in $\mathsf
C$. 
\end{itemize}
\end{proposition}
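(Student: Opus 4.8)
The plan is to obtain parts (1) and (2) as direct applications of \cite[Corollary 1.7]{Bohm:Xmod_I} to the morphism $q_2$ of \eqref{eq:q2}, and to reserve the real work for the relative-category axioms in part (3). I would begin by recording that $q_2$ is invertible: this is immediate from the standing hypothesis that $q_3$ is invertible together with Lemma \ref{lem:hn}~(3) (with $n=2$). As already noted in the discussion opening this section, invertibility of $q_2$ then lets me invoke \cite[Corollary 1.7]{Bohm:Xmod_I} to conclude part (1). The underlying reason is a one-line computation: if $d$ is a monoid morphism satisfying the two unit triangles, then expanding $q_2$ and using multiplicativity of $d$ gives $d.q_2=m.(dd).((1\morcoten i)(i\morcoten 1)).(p_A1)=m.((d.(1\morcoten i))(d.(i\morcoten 1))).(p_A1)=m.(p_A1)$, whence $d=m.(p_A1).q_2^{-1}$ is forced because $q_2$ is epic.

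For part (2) the same corollary identifies when this candidate $m.(p_A1).q_2^{-1}$ is a genuine monoid morphism, namely exactly when the compatibility square displayed in the statement commutes. I would complement this by checking the unit triangles for the formula directly: the relation $q_2.(u1)=i\morcoten 1$ (the analogue of the relation $q.1u=i$ used before Theorem \ref{thm:ReflGraph_vs_PreX}, and a consequence of the unitality of $p_A$ and of $1\morcoten i$) yields $d.(i\morcoten 1)=m.(p_A1).(u1)=1$ at once, while the remaining triangle $d.(1\morcoten i)=1$ is governed precisely by the displayed square; both verifications rest on the fact that the projections of the relative pullback are jointly monic by \cite[Proposition 3.5]{Bohm:Xmod_I}.

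For part (3) I must check that the datum $(B,A,s,t,i,d)$ satisfies the axioms of an $\mathcal S$-relative category of monoids in the sense of \cite[Definition 4.9]{Bohm:Xmod_I}. That $d$ is a monoid morphism is part (2) and that it is unital is part (1), so two things remain. First I would verify that $d$ is a morphism of spans, i.e.\ that $s.d$ and $t.d$ read off the source and target of the appropriate factor; after composing with the isomorphism $q_2$ and projecting, this collapses to the facts that $s$ and $t$ are monoid morphisms and to the defining relations of $p_A$ and $i$. Second, and this is the heart of the matter, I would prove the associativity $d.(d\morcoten 1)=d.(1\morcoten d)$ of the two composites $A\coten B A\coten B A\to A$; this is exactly the step for which the invertibility of $q_3$, and not merely that of $q_2$, is needed.

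The associativity identity is the main obstacle. My strategy is to precompose the desired equality with the isomorphism $q_3\colon (A\coten B I)(A\coten B A)\to A\coten B A\coten B A$ of \eqref{eq:qn}, converting an identity of morphisms out of the triple relative pullback into one out of the far more tractable object $(A\coten B I)(A\coten B A)$. There I would expand both sides through the explicit formula $d=m.(p_A1).q_2^{-1}$ of part (2), rewriting the nested occurrences of $q_2^{-1}$ and $q_3^{-1}$ and using the joint monomorphism property of the projections (\cite[Proposition 3.5]{Bohm:Xmod_I}) both to see that every intermediate cotensor morphism is well defined and to reduce the comparison to honest tensor powers of $A$. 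Both composites should then collapse to the same morphism assembled from the associativity and unitality of $m$ together with a single application of the compatibility square of part (2); I expect this to be a long but wholly mechanical diagram chase, its only essential inputs being that square and the invertibility of $q_3$.
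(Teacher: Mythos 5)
Your proposal is correct and follows essentially the same route as the paper: parts (1) and (2) are obtained from \cite[Corollary 1.7]{Bohm:Xmod_I} applied to the invertible $q_2$ (with the unit triangles closed by the identities $q_2.u1=i\morcoten 1$ and $q_2.1i=(1\morcoten i).q$, the paper using instead that $p_A$ and $i$ are jointly epic), and associativity in part (3) rests on the invertibility of $q_3$. The only difference is presentational: where you anticipate a long mechanical expansion of $d.(d\morcoten 1).q_3$ and $d.(1\morcoten d).q_3$, the paper simply notes that both composites render commutative the one diagram that \cite[Corollary 1.7]{Bohm:Xmod_I} declares to have at most one filler --- the essential inputs being the two unit triangles and multiplicativity rather than the compatibility square of part (2) --- which is the same argument with the computation suppressed.
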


\begin{proof}
The proof is built on \cite[Corollary 1.7]{Bohm:Xmod_I}.

(1) Since the morphism $q_2$ in Lemma \ref{lem:hn}~(3) is invertible, we know
from \cite[Corollary 1.7]{Bohm:Xmod_I} that there is at most one monoid
morphism rendering commutative 
\begin{equation}\label{eq:lem_d}
\xymatrix@C=15pt@R=20pt{
A \coten B I \ar[r]^-{p_A} \ar@/_1.2pc/[rrd]_-{p_A} &
A \ar[r]^-{1\diagcoten \, i} 
%\ar@/_1.2pc/@{=}[rd] 
&
A \coten B A  \ar@{-->}[d] &&&
A. \ar[lll]_-{i \, \diagcoten 1} \ar@/^1.2pc/@{=}[llld] \\
&& A}
\end{equation}
Since a monoid morphism $d$ as in part (1) obviously renders commutative
\eqref{eq:lem_d}, this proves its uniqueness.

(2) By \cite[Corollary 1.7]{Bohm:Xmod_I} commutativity of the diagram
of part (2) is equivalent to the existence of a (unique) monoid morphism making
\eqref{eq:lem_d} commute. Since a monoid morphism $d$ in part (1) provides such a
morphism, its existence implies commutativity of the diagram of part (2).

In order to prove the converse implication, we show that any monoid morphism $d$
making \eqref{eq:lem_d} commute renders commutative also the diagram of part
(1). Recall from \cite[Lemma 1.2]{Bohm:Xmod_I} that the invertibility of $q$ in
Theorem \ref{thm:ReflGraph_vs_PreX}~(b) implies that $p_A$ and $i$
are joint epimorphisms of monoids. Hence if $d$ makes \eqref{eq:lem_d} commute then it
does so the left hand side of the diagram of part (1) by $d.(1\morcoten
i).i=d.(i\morcoten 1).i=i$. 

The stated expression of $d$ immediately follows from 
\cite[Corollary 1.7]{Bohm:Xmod_I}. 

(3) In order to see that the monoid morphism $d$ in part (1) is a morphism of spans, we use that by the invertibility of $q_2$ there are unique morphisms rendering commutative the respective diagrams 
$$
\xymatrix@C=15pt@R=20pt{
A \coten B I \ar[r]^-{p_A} \ar@/_1.2pc/[rrd]_-{s.p_A} &
A \ar[r]^-{1\diagcoten \, i} 
&
A \coten B A  \ar@{-->}[d] &&&
A \ar[lll]_-{i \, \diagcoten 1} \ar@/^1.2pc/[llld]^-s \\
&& B}
\quad \textrm{and} \quad
\xymatrix@C=15pt@R=20pt{
A \coten B I \ar[r]^-{p_A} \ar@/_1.2pc/[rrd]_-{t.p_A} &
A \ar[r]^-{1\diagcoten \, i} 
&
A \coten B A  \ar@{-->}[d] &&&
A, \ar[lll]_-{i \, \diagcoten 1} \ar@/^1.2pc/[llld]^-t \\
&& B}
$$
see \cite[Corollary 1.7]{Bohm:Xmod_I}. Now $s.d$ obviously makes the
first diagram commute and so does 
$\xymatrix@C=12pt{A\coten B A \ar[r]^-{p_2} & A \ar[r]^-s & B}$ by the
commutativity of 
$$
\xymatrix@R=15pt{
A\coten B I \ar[r]^-{p_A} &
A  \ar[r]^-{1\diagcoten \, i} \ar[d]_-s &
A \coten B A \ar[d]^-{p_2} \\
& B \ar[r]^-i \ar@/_1.2pc/@{=}[rd] &
A \ar[d]^-s \\
&& B}\qquad \textrm{and} \qquad
\xymatrix@R=15pt{
A \ar[r]^-{i \, \diagcoten 1} \ar@/_1.2pc/@{=}[rd] &
A \coten B A \ar[d]^-{p_2} \\
& A \ar[d]^-s \\
& B}
$$
Thus they are equal.
Similarly, both $t.d$ and 
$\xymatrix@C=12pt{A\coten B A \ar[r]^-{p_1} & A \ar[r]^-t & B}$ render
commutative the second diagram proving that they are equal.

The to-be composition morphism  $d$ in part (1) admits the unit $i$ by
construction. Its associativity follows again by 
\cite[Corollary 1.7]{Bohm:Xmod_I} since by the invertibility of $q_3$ there is at
most one morphism rendering commutative 
$$
\xymatrix@C=20pt@R=20pt{
A \coten B I \ar[r]^-{p_A} \ar@/_1.1pc/[rrd]_-{p_A} &
A \ar[r]^-{1\diagcoten \, i \, \diagcoten \, i} 
&
A \coten B A \coten B A \ar@{-->}[d] &&&
A\coten B A . \ar[lll]_-{i \, \diagcoten 1} \ar@/^1.1pc/[llld]^-d \\
&& A}
$$
Since both $d.(d\morcoten 1)$ and $d.(1\morcoten d)$ do so by the
commutativity of
$$
\xymatrix@C=15pt@R=15pt{
A \coten B I \ar[r]^-{p_A} &
A \ar[r]^-{1\diagcoten \, i \, \diagcoten \, i} \ar[rd]|-{1\diagcoten \, i}
\ar@/_1.2pc/@{=}[rdd] &
A^{\expcoten B 3} \ar[d]^-{d\diagcoten 1} \\
&& A \coten B A \ar[d]^-d \\
&& A}\ 
\xymatrix@C=15pt@R=15pt{
A \coten B I \ar[r]^-{p_A} &
A \ar[r]^-{1\diagcoten \, i \, \diagcoten \, i} \ar[rd]|-{1\diagcoten \, i}
\ar@/_1.2pc/@{=}[rdd] &
A^{\expcoten B 3} \ar[d]^-{1\diagcoten d} \\
&& A \coten B A \ar[d]^-d \\
&& A}\ 
\xymatrix@C=15pt@R=15pt{
A \coten B A \ar[r]^-{i \, \diagcoten 1\diagcoten 1} 
\ar@/_1.2pc/@{=}[rd] &
A^{\expcoten B 3} \ar[d]^-{d\diagcoten 1} \\
& A \coten B A \ar[d]^-d \\
& A}\ 
\xymatrix@C=15pt@R=15pt{
A \coten B A \ar[r]^-{i \, \diagcoten 1\diagcoten 1} \ar[d]_-d &
A^{\expcoten B 3} \ar[d]^-{1\diagcoten d} \\
A \ar[r]^-{i \, \diagcoten 1} \ar@/_1.2pc/@{=}[rd] & 
A \coten B A \ar[d]^-d \\
& A}
$$
this proves their
equality (modulo the omitted associativity isomorphism in 
\cite[Proposition 3.6]{Bohm:Xmod_I}). 
\end{proof}

\begin{proposition} \label{prop:S-functor_monoid}
Consider a monoidal admissible class $\mathcal S$ of spans in a monoidal
category $\mathsf C$ such that \cite[Assumption 4.1]{Bohm:Xmod_I} holds. 
Between $\mathcal S$-relative categories
in the category of monoids in $\mathsf C$ for which the morphisms $q_2$ in
Lemma \ref{lem:hn}~(3) are invertible, 
any morphism of reflexive graphs of monoids
is in fact an $\mathcal S$-relative functor.
\end{proposition}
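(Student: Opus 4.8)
The plan is to show that a morphism $(\xymatrix@C=12pt{B \ar[r]^-b & B'},\xymatrix@C=12pt{A \ar[r]^-a & A'})$ of reflexive graphs of monoids automatically respects the composition morphisms; that is, writing $\xymatrix@C=15pt{A\coten B A \ar[r]^-{a \morcoten a} & A' \coten {B'} A'}$ for the induced morphism (well-defined and a monoid morphism by \cite[Proposition 3.5]{Bohm:Xmod_I} and \cite[Proposition 3.7~(2)]{Bohm:Xmod_I}, since $(b,a)$ is compatible with $s$ and $t$), I must prove $a.d=d'.(a\morcoten a)$. Because $q_2$ and $q_2'$ are invertible by hypothesis and the composition morphisms of the two relative categories exist, Proposition \ref{prop:relcat-monoid}~(2) gives the explicit formulas $d=m.(p_A1).q_2^{-1}$ and $d'=m'.(p_{A'}1).(q_2')^{-1}$. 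Thus everything reduces to a computation with the canonical morphisms $q_2,q_2'$ of \eqref{eq:q2}.

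The key step is to establish the naturality square
$$
\xymatrix@C=48pt{
(A \coten B I)A \ar[r]^-{q_2} \ar[d]_-{(a\morcoten 1)a} &
A \coten B A \ar[d]^-{a\morcoten a} \\
(A' \coten {B'} I)A' \ar[r]_-{q_2'} &
A' \coten {B'} A'.}
$$
To prove its commutativity I would first record that $a\morcoten a$ is compatible with the two degeneracy morphisms of \eqref{eq:d_exists}, namely $(a\morcoten a).(1\morcoten i)=(1\morcoten i').a$ and $(a\morcoten a).(i\morcoten 1)=(i'\morcoten 1).a$. Each such identity is checked by post-composing with the jointly monic projections $p_1',p_2'$ of $A'\coten {B'} A'$ (see \cite[Proposition 3.5]{Bohm:Xmod_I}): on one projection both sides reduce to $a$, while on the other they reduce to a composite of $i'$ and $a$ with a source or target map, where the three defining identities $a.i=i'.b$, $b.s=s'.a$ and $b.t=t'.a$ of a reflexive-graph morphism make the two sides agree. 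Granting these, the square commutes because $a\morcoten a$ is a monoid morphism (so it carries the multiplication of $A\coten B A$ to that of $A'\coten {B'} A'$), because $a.p_A=p_{A'}.(a\morcoten 1)$ by the construction of $a\morcoten 1$, and because $a$ itself is multiplicative, so that $q_2$ expands naturally across the square.

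Finally, since $q_2$ and $q_2'$ are invertible by Lemma \ref{lem:hn}~(3), the naturality square may be rewritten as $(q_2')^{-1}.(a\morcoten a)=((a\morcoten 1)a).q_2^{-1}$, whence
\begin{align*}
d'.(a\morcoten a)
&=m'.(p_{A'}1).(q_2')^{-1}.(a\morcoten a)
=m'.(p_{A'}1).((a\morcoten 1)a).q_2^{-1}\\
&=m'.(aa).(p_A1).q_2^{-1}
=a.m.(p_A1).q_2^{-1}
=a.d,
\end{align*}
using $p_{A'}.(a\morcoten 1)=a.p_A$ and the multiplicativity of $a$. I expect the main obstacle to be the verification of the two degeneracy-compatibility identities above: they are the place where all three defining conditions of a morphism of reflexive graphs genuinely enter, and they must be argued through the joint monicity of the projections of the relative pullback rather than through any direct formula for $1\morcoten i$ and $i\morcoten 1$.
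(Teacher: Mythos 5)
Your proposal is correct and follows essentially the same route as the paper: both establish the naturality square for $q_2$ (the paper's first displayed diagram in this proof, whose middle square is your degeneracy-compatibility step, justified there by citing the functoriality of $\Box$ from \cite[Proposition 3.5~(2)]{Bohm:Xmod_I} together with the multiplicativity of $a\morcoten a$), and then transport it through $q_2^{-1}$ and the explicit formula $d=m.(p_A1).q_2^{-1}$ from Proposition \ref{prop:relcat-monoid}~(2) to conclude $a.d=d'.(a\morcoten a)$. The only difference is that you re-derive by hand, via the joint monicity of the projections, what the paper obtains by citation.
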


\begin{proof}
Take $\mathcal S$-relative categories 
$\xymatrix@C=20pt{
B \ar@{ >->}[r]|(.55){\, i\, } &
A \ar@{->>}@<-4pt>[l]_-s  \ar@{->>}@<4pt>[l]^-t &
\ar[l]_-d A \coten B A}$ and
$\xymatrix@C=20pt{
B' \ar@{ >->}[r]|(.55){\, i'\, } &
A' \ar@{->>}@<-4pt>[l]_-{s'}  \ar@{->>}@<4pt>[l]^-{t'} &
 \ar[l]_-{d'} A' \coten {B'} A'}$
as in the claim. We need to check the compatibility of any morphism of reflexive graphs 
$(\xymatrix@C=12pt{B \ar[r]^-b & B'},\xymatrix@C=12pt{A \ar[r]^-a & A'})$
with the composition morphisms $d$ and $d'$. 
The first diagram of  
$$
\xymatrix@C=14pt{
(A \coten B I)A \ar[r]_-{p_A1} \ar[d]_-{(a\diagcoten 1)a} 
\ar@/^1.1pc/[rrrr]^-{q_2} &
A^2 \ar[rr]_-{(1\diagcoten \, i)(i \, \diagcoten 1)} \ar[d]^-{aa} &&
(A \coten B A)^2 \ar[r]_-m \ar[d]^-{(a\diagcoten a)(a\diagcoten a)} &
A \coten B A \ar[d]^-{a\diagcoten a} \\
(A' \coten {B'} I)A' \ar[r]^-{p_{A'}1} \ar@/_1.1pc/[rrrr]_-{q'_2}&
A^{\prime 2} \ar[rr]^-{(1\diagcoten \, i')(i'\diagcoten 1)}  &&
(A' \coten {B'} A')^2 \ar[r]^-{m'}  &
A' \coten {B'} A' }
\quad
\xymatrix@C=12pt@R=24pt{
A \coten B A \ar[r]_-{q_2^{-1}} \ar[d]_-{a\diagcoten a} 
\ar@/^1.1pc/[rrr]^-d &
(A \coten B I)A \ar[r]_-{p_A1} \ar[d]^-{(a\diagcoten 1)a} &
A^2 \ar[r]_-m \ar[d]^-{aa} &
A \ar[d]^-a \\
A' \coten {B'} A' \ar[r]^-{q_2^{\prime -1}}  \ar@/_1.1pc/[rrr]_-{d'} &
(A' \coten {B'} I)A' \ar[r]^-{p_{A'}1}  &
A^{\prime 2} \ar[r]^-{m'} &
A' }
$$
commutes since $a\morcoten a$ is multiplicative by \cite[Proposition 3.7~(2)]{Bohm:Xmod_I} and
by the functoriality of $\Box$; see \cite[Proposition 3.5~(2)]{Bohm:Xmod_I}. It is used to prove the commutativity of the second
diagram. 
\end{proof}

\subsection{The equivalence between relative categories and crossed modules of monoids} 

\begin{theorem}\label{thm:SCat_vs_Xmod}
Consider a monoidal admissible class $\mathcal S$ of spans in a monoidal category $\mathsf C$ such that \cite[Assumption 4.1]{Bohm:Xmod_I} holds. Use the same notation $\mathcal S$ for the induced admissible class of spans in the category of monoids in $\mathsf C$ from \cite[Example 2.8]{Bohm:Xmod_I} (also satisfying \cite[Assumption 4.1]{Bohm:Xmod_I} by \cite[Example 4.4]{Bohm:Xmod_I}). The following categories are equivalent.
\begin{itemize}
\item[{$\mathsf{Cat}$}]\hspace{-.3cm} $\mathsf{Mon}_{\mathcal S}(\mathsf C)$ whose \\
\underline{objects} are $\mathcal S$-relative categories 
$\xymatrix@C=20pt{
B \ar@{ >->}[r]|(.55){\, i\, } &
A \ar@{->>}@<-4pt>[l]_-s  \ar@{->>}@<4pt>[l]^-t &
A\coten B A \ar[l]_-d}$ 
in the category of monoids in $\mathsf C$ such that the morphisms $q_n$ of \eqref{eq:qn} are invertible for any positive integer $n$. \\
\underline{morphisms} are $\mathcal S$-relative functors in the category of monoids in $\mathsf C$.
\smallskip

\item[{$\mathsf{X}\hspace{.3cm}$}]\hspace{-.58cm} $\mathsf{mod}_{\mathcal S}(\mathsf C)$ whose \\ 
\underline{objects} consist of monoids $B$ and $Y$, monoid morphisms
$\xymatrix@C=12pt{Y \ar[r]^-e & I}$ and
$\xymatrix@C=12pt{Y \ar[r]^-k& B}$
and a distributive law
$\xymatrix@C=12pt{BY \ar[r]^-x & YB}$ subject to the following conditions.
\begin{itemize}
\item[{(a')}] 
$\xymatrix@C=12pt{
B & \ar[l]_-k  Y \ar@{=}[r] & Y}\in \mathcal S$,
$\xymatrix@C=12pt{
Y \ar@{=}[r] & Y \ar[r]^-e & I}\in \mathcal S$
and
$\xymatrix@C=12pt{
B \ar@{=}[r] & B \ar@{=}[r] & B}\in \mathcal S$.
\item[{(b')}] $e1.x=1e$ and $m.k1.x=m.1k$.
\item[{(c')}] The morphism $f$ of Theorem \ref{thm:SplitEpi_vs_DLaw}~(c') is invertible and the morphisms $b_n$ of Lemma \ref{lem:bn}~(2) are invertible for all positive integers $n$. 
\item[{(d')}] Regarding $YB$ as a monoid via the structure induced by the distributive law $x$, the following diagram commutes.
$$
\xymatrix@C=10pt{
YBY \ar[rr]^-{u111uu} \ar[d]_-{1x} &&
(Y^2B)^2 \ar[rr]^-{b_2b_2} &&
(YB \coten B YB)^2 \ar[rr]^-m &&
YB \coten B YB \ar[d]^-{b_2^{-1}} \\
Y^2B \ar[rrr]_-{m1} &&&
YB &&&
Y^2B \ar[lll]^-{m1}}
$$
\end{itemize}
\underline{morphisms} are pairs of monoid morphisms 
$(\xymatrix@C=12pt{B \ar[r]^-b & B'},\xymatrix@C=12pt{Y \ar[r]^-y & Y'})$
such that $e'.y=e$, $k'.y=b.k$ and $x'.by=yb.x$.
\end{itemize}
\end{theorem}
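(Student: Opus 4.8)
The plan is to show that the mutually inverse equivalence functors of Theorem \ref{thm:ReflGraph_vs_PreX} carry the subcategory $\mathsf{CatMon}_{\mathcal S}(\mathsf C)$ onto the subcategory $\mathsf{Xmod}_{\mathcal S}(\mathsf C)$; so everything reduces to matching the defining conditions on objects and to checking that no genuinely new condition is imposed on morphisms. I would begin on the object level. An object of $\mathsf{CatMon}_{\mathcal S}(\mathsf C)$ is, in particular, a reflexive graph of monoids satisfying conditions (a), (b) of Theorem \ref{thm:ReflGraph_vs_PreX}, so that the corresponding pre-crossed module exists; and forming the composite $A\coten B A$ forces the leg $\xymatrix@C=10pt{B & \ar[l]_-t A \ar@{=}[r] & A}$ to lie in $\mathcal S$. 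Under the equivalence of Theorem \ref{thm:ReflGraph_vs_PreX} this last requirement translates, by Lemma \ref{lem:t-k_in_S}, precisely into the condition $\xymatrix@C=10pt{B & \ar[l]_-k Y \ar@{=}[r] & Y}\in\mathcal S$ appearing in (a') of the claim.

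Next I would match the invertibility hypotheses. The category $\mathsf{CatMon}_{\mathcal S}(\mathsf C)$ asks that all the morphisms $q_n$ of \eqref{eq:qn} be invertible. On the pre-crossed module side $f$ is invertible by (c') of Theorem \ref{thm:ReflGraph_vs_PreX}, and $f1$ is the inverse of $q_1$ (see the proof of Theorem \ref{thm:SplitEpi_vs_DLaw}); moreover Lemma \ref{lem:bn}~(5) identifies invertibility of $b_n$ with that of the corresponding $q_n$. Hence, under the identification $A\cong YB$ furnished by the equivalence, the requirement in (c') of the present claim — that $f$ and all the $b_n$ be invertible — is exactly the requirement that all the $q_n$ be invertible.

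The substantive point is condition (d'). By Proposition \ref{prop:relcat-monoid}, once $q_2$ and $q_3$ are invertible the composition morphism $d$ exists, is unique, and automatically upgrades the reflexive graph to an $\mathcal S$-relative category; and it exists exactly when the diagram of Proposition \ref{prop:relcat-monoid}~(2) commutes. I would transport that diagram through the equivalence functor of Theorem \ref{thm:ReflGraph_vs_PreX}, using $A\cong YB$, the identification $A\coten B I\cong Y$ induced by $f$, the isomorphism $b_2\colon Y^2B\to YB\coten B YB$ of Lemma \ref{lem:bn}~(2) presenting $A\coten B A$, together with the formulas $i=u1$ and $t=m.k1$ and the expression of $q_2$ in terms of the $b$'s from Lemma \ref{lem:bn}~(4). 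The two occurrences of $p_A$ correspond to $1u\colon Y\to YB$, and the identities $b_2.u11=u1\morcoten1$ and $b_2.1u1=1\morcoten u1$ of Lemma \ref{lem:b2-u} convert these into the cotensor embeddings; after this substitution the Proposition \ref{prop:relcat-monoid}~(2) diagram becomes exactly the diagram (d'). This translation is the main obstacle: it is a single but intricate diagram chase, and some care is needed because one must argue up to the joint monomorphisms $p_1,p_2$ of $A\coten B A$, verifying the equality only after postcomposition with these legs.

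Finally, for morphisms there is nothing further to check. A morphism in $\mathsf{CatMon}_{\mathcal S}(\mathsf C)$ is an $\mathcal S$-relative functor, but by Proposition \ref{prop:S-functor_monoid} any morphism of the underlying reflexive graphs between relative categories with invertible $q_2$ is automatically a relative functor; thus the morphisms of $\mathsf{CatMon}_{\mathcal S}(\mathsf C)$ coincide with those of $\mathsf{ReflGraphMon}_{\mathcal S}(\mathsf C)$, which correspond under Theorem \ref{thm:ReflGraph_vs_PreX} to morphisms of pre-crossed modules, that is, to the morphisms listed for $\mathsf{Xmod}_{\mathcal S}(\mathsf C)$. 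Since the composition morphism $d$ and the condition (d') determine, respectively are equivalent to, one another, the two mutually inverse functors of Theorem \ref{thm:ReflGraph_vs_PreX} restrict to mutually inverse equivalences between $\mathsf{CatMon}_{\mathcal S}(\mathsf C)$ and $\mathsf{Xmod}_{\mathcal S}(\mathsf C)$, which is the assertion of the theorem.
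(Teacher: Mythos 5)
Your proposal is correct and follows essentially the same route as the paper: both reduce the statement to showing that the equivalence of Theorem \ref{thm:ReflGraph_vs_PreX} restricts to the full subcategories, matching condition (a') via Lemma \ref{lem:t-k_in_S}, condition (c') via Lemma \ref{lem:bn}~(5), condition (d') with the diagram of Proposition \ref{prop:relcat-monoid}~(2) using Lemma \ref{lem:b2-u} and the relation between $q_2$ and $b_2$, and disposing of morphisms by Proposition \ref{prop:S-functor_monoid}. The only difference is that the paper carries out in full (in Figure \ref{fig:d'} and the closing diagram of the proof, including the auxiliary identity $(q\morcoten q).b_2=q_2.1q$) the two directions of the diagram translation that you correctly identify as the main remaining computation.
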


\begin{proof}
It follows by Proposition \ref{prop:relcat-monoid} and Proposition \ref{prop:S-functor_monoid} that $\mathsf{CatMon}_{\mathcal S}(\mathsf C)$ is a full subcategory of $\mathsf{ReflGraphMon}_{\mathcal S}(\mathsf C)$ and obviously $\mathsf{Xmod}_{\mathcal S}(\mathsf C)$ is a full subcategory of $\mathsf{PreX}_{\mathcal S}(\mathsf C)$. Below we show that the mutually inverse functors of Theorem \ref{thm:ReflGraph_vs_PreX} restrict to functors between these subcategories thus establishing the stated equivalence.

Regarding an object 
$\xymatrix@C=20pt{
B \ar@{ >->}[r]|(.55){\, i\, } &
A \ar@{->>}@<-4pt>[l]_-s  \ar@{->>}@<4pt>[l]^-t&
A\coten B A \ar[l]_-d}$ 
of $\mathsf{CatMon}_{\mathcal S}(\mathsf C)$ as an object
$\xymatrix@C=20pt{
B \ar@{ >->}[r]|(.55){\, i\, } &
A \ar@{->>}@<-4pt>[l]_-s  \ar@{->>}@<4pt>[l]^-t}$
of $\mathsf{ReflGraphMon}_{\mathcal S}(\mathsf C)$, the functor in the proof of Theorem \ref{thm:ReflGraph_vs_PreX} takes it to the object
$(B,A\coten B I,
\xymatrix@C=12pt{A\coten B I \ar[r]^-{p_I} & I},
\xymatrix@C=12pt{A\coten B I \ar[r]^-{p_A} & A \ar[r]^-t & B},
\xymatrix@C=12pt{B(A\coten B I) \ar[r]^-{ip_A} & A^2 \ar[r]^-m & A \ar[r]^-{q^{-1}} & (A\coten B I)B})$
of the category  $\mathsf{PreX}_{\mathcal S}(\mathsf C)$; we claim that it is in fact an object of 
$\mathsf{Xmod}_{\mathcal S}(\mathsf C)$.

It satisfies the condition
$\xymatrix@C=12pt{B & \ar[l]_-t A & \ar[l]_-{p_A}A\coten B I \ar@{=}[r] & A}\in \mathcal S$ by Lemma \ref{lem:t-k_in_S}. 

From Lemma \ref{lem:bn}~(5) we know that the morphism $b_n$ of Lemma \ref{lem:bn}~(2) is invertible  if and only if the left column of the commutative diagram
$$
\xymatrix@C=55pt @R=15pt{
((A\coten B I)B \coten B I)((A\coten B I)B)^{\expcoten B n-1} \ar[d]_-{p_{(A\diagcoten_B I)B}1} \ar[r]^-{(q\diagcoten 1)q^{\diagcoten n-1}} &
(A \coten B I)A^{\expcoten B n-1} \ar[d]_-{p_A 1}   \ar@/^3pc/[ddd]^-{q_n} \\
(A\coten B I)B((A\coten B I)B)^{\expcoten B n-1} \ar[r]^-{qq^{\diagcoten n-1}}
\ar[d]_-{(1\diagcoten u1 \diagcoten \cdots \diagcoten u1)(u1\diagcoten 1)} &
AA^{\expcoten B n-1} \ar[d]_-{(1\diagcoten \, i \, \diagcoten \cdots \diagcoten \, i)(i \, \diagcoten 1)}  \\
(((A\coten B I)B)^{\expcoten B n})^2 \ar[d]_-m \ar[r]^-{q^{\diagcoten n}q^{\diagcoten n}} &
(A^{\expcoten B n})^2 \ar[d]_-m \\
((A\coten B I)B)^{\expcoten B n} \ar[r]_-{q^{\diagcoten n}} &
A^{\expcoten B n} }
$$
is invertible. Recognize the isomorphism $q_n$ in the right column. Since also the rows are isomorphisms  by assumption, so is the left column and hence $b_n$.

The proof of the commutativity of the diagram in part (d') requires some preparation. The commutativity of
$$
\xymatrix@R=10pt@C=10pt{
&&&& (A\coten B I)B \coten B (A\coten B I)B \ar[r]^-{q\diagcoten q} \ar[d]^-{p_1} &
A \coten B A\ar[d]^-{p_1} \\
(A\coten B I)^2B \ar@/^1.2pc/[rrrru]^-{b_2} \ar@/_1.5pc/[rrrrdd]_-{1q} \ar[r]_-{1p_A1} &
(A\coten B I)AB \ar@{=}[r] \ar[rd]_-{11i} &
(A\coten B I)AB \ar[r]^-{1t1} &
(A\coten B I)B^2 \ar[r]^-{1m} &
(A\coten B I)B \ar[r]^-q  &
A \\
&& (A\coten B I)A^2\ar[u]_-{11t} \ar[rrd]^-{1m}  \\
&&&& (A\coten B I)A\ar[uu]_-{1t} \ar[r]_-{q_2} &
A \coten B A  \ar[uu]_-{p_1}}
$$
$$
\xymatrix@R=10pt@C=112pt{
& (A\coten B I)B \coten B (A\coten B I)B \ar[r]^-{q\diagcoten q} \ar[d]^-{p_2} &
A \coten B A\ar[d]^-{p_2} \\
(A\coten B I)^2B \ar@/^1.2pc/[ru]^-{b_2} \ar@/_1.1pc/[rd]_-{1q} \ar[r]^-{p_I 11} &
(A\coten B I)B \ar[r]^-q   &
A \\
&(A\coten B I)A\ar[ru]^(.45){p_I 1} \ar[r]_-{q_2} &
A \coten B A  \ar[u]_-{p_2}}
$$
proves $(q\morcoten q).b_2=q_2.1q$. (Here the bottom-right region of the first diagram commutes since the lower half of the diagram of \eqref{eq:hn_1} commutes and the bottom-right region of the second diagram commutes since the lower half of the diagram of \eqref{eq:hn_2} commutes.) By the associativity of $A$ and the multiplicativity of $\xymatrix@C=12pt{A \coten B I \ar[r]^-{p_A} & A}$ also the following diagram commutes. 
$$
\xymatrix@R=15pt{
(A\coten B I)^2B \ar[r]_-{1p_Ai} \ar[dd]_-{m1} \ar@/^1.1pc/[rr]^-{1q} &
(A\coten B I)A^2 \ar[r]_-{1m} \ar[d]^-{p_A11} &
(A\coten B I)A \ar[d]^-{p_A1} \\
& A^3 \ar[r]^-{1m} \ar[d]^-{m1} &
A^2 \ar[d]^-m \\
(A\coten B I)B \ar[r]^-{p_Ai} \ar@/_1.1pc/[rr]_-q &
A^2 \ar[r]^-m &
A}
$$
With the help of these identities and Lemma \ref{lem:b2-u}, and using that the region marked by $(\ast)$ commutes by Proposition \ref{prop:relcat-monoid}~(2), the diagram of Figure \ref{fig:d'} is seen to commute. This proves that the stated object belongs to $\mathsf{Xmod}_{\mathcal S}(\mathsf C)$ indeed.
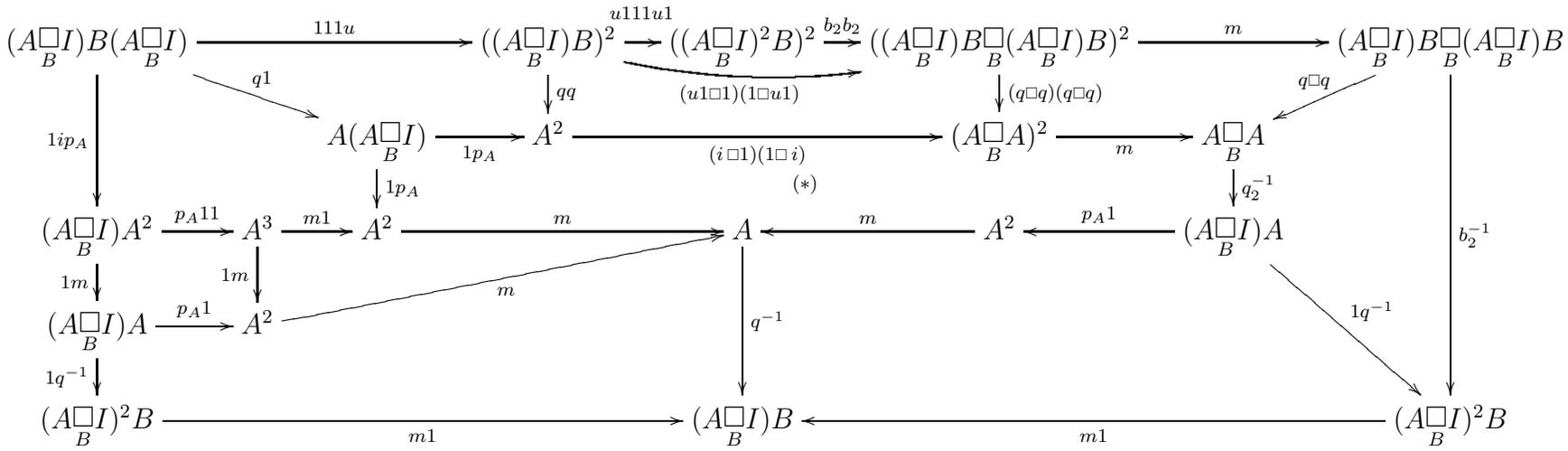
\begin{figure} 
\centering
\begin{sideways}
$\xymatrix@C=15pt@R=15pt{
% 1.1
(A\coten B I)B(A\coten B I) \ar[rrr]^-{111u} \ar[rrd]^-{q1} \ar[dd]_-{1ip_A} &&&
% 1.4
((A\coten B I)B)^2 \ar[r]^-{\raisebox{8pt}{${}_{u111u1}$}}  \ar[d]^-{qq}\ar@/_1.2pc/[rr]_-{(u1\diagcoten 1)(1\diagcoten u1)} &
% 1.5
((A\coten B I)^2B)^2\ar[r]^-{b_2b_2} &
%1.6
((A\coten B I)B \coten B (A\coten B I)B)^2 \ar[rr]^-m \ar[d]^-{(q\diagcoten q)(q\diagcoten q)} &&
% 1.8
(A\coten B I)B \coten B (A\coten B I)B \ar[ld]_-{q\diagcoten q} \ar[dddd]^-{b_2^{-1}} \\
% 2.3
&& A(A\coten B I)\ar[r]_-{1p_A} \ar[d]^-{1p_A}  \ar@{}[rrrrd]|-{(\ast)}&
% 2.4
A^2 \ar[rr]_-{(i \, \diagcoten 1)(1\diagcoten \, i)} &&
% 2.6
(A\coten B A)^2 \ar[r]_-m &
% 2.7
A \coten B A \ar[d]^-{q_2^{-1}} \\
% 3.1
(A\coten B I)A^2 \ar[r]^-{p_A11} \ar[d]_-{1m} &
% 3.2
A^3 \ar[r]^-{m1} \ar[d]_-{1m} &
% 3.3
A^2 \ar[rr]^-m &&
% 3.5
A \ar[dd]^-{q^{-1}} &
% 3.6
A^2 \ar[l]_-m &
% 3.7
(A\coten B I)A\ar[l]_-{p_A1} \ar[rdd]^-{1q^{-1}} \\
% 4.1
(A\coten B I)A \ar[r]^-{p_A1} \ar[d]_-{1q^{-1}} &
% 4.2
A^2 \ar[rrru]_-m \\
% 5.1
(A\coten B I)^2B \ar[rrrr]_-{m1} &&&&
% 5.5
(A\coten B I)B &&&
% 5.8
(A\coten B I)^2B\ar[lll]^-{m1}}$
\end{sideways}
\caption{Commutativity of the diagram in part (d')}
\label{fig:d'}
\end{figure}

In the opposite direction, consider an object 
$(B,Y,
\xymatrix@C=12pt{Y \ar[r]^-e&I},
\xymatrix@C=12pt{Y \ar[r]^-k&B},
\xymatrix@C=12pt{BY \ar[r]^-x&YB})$
of $\mathsf{Xmod}_{\mathcal S}(\mathsf C)$ as an object of $\mathsf{PreX}_{\mathcal S}(\mathsf C)$. The functor in the proof of Theorem \ref{thm:ReflGraph_vs_PreX} takes it to the object 
$\xymatrix@C=30pt{
B \ar@{ >->}[r]|(.55){\, u1\, } &
A \ar@{->>}@<-4pt>[l]_-{e1}  \ar@{->>}@<4pt>[l]^-{m.k1}}$
of $\mathsf{ReflGraphMon}_{\mathcal S}(\mathsf C)$; we claim that it can be seen as an object of $\mathsf{CatMon}_{\mathcal S}(\mathsf C)$.

By Lemma \ref{lem:bn}~(1) the span $\xymatrix@C=12pt{B & \ar[l]_-m B^2 & \ar[l]_-{k1} YB \ar@{=}[r] & YB}$ belongs to $\mathcal S$.

The morphism $q_n$ of Lemma \ref{lem:bn}~(4) is invertible for all positive integers $n$ by Lemma \ref{lem:bn}~(5).

By Proposition \ref{prop:relcat-monoid}~(2) and (3), the reflexive graph of monoids
$\xymatrix@C=30pt{
B \ar@{ >->}[r]|(.55){\, u1\, } &
A \ar@{->>}@<-4pt>[l]_-{e1}  \ar@{->>}@<4pt>[l]^-{m.k1}}$
extends to an $\mathcal S$-relative category in the category of monoids in $\mathsf C$ by the commutativity of
$$
\xymatrix@C=15pt{
YB(YB \coten B I) \ar[r]^-{11p_{YB}} \ar[ddd]_-{11p_{YB}} \ar[rd]_-{11f^{-1}} &
(YB)^2 \ar[rr]^-{(u1\diagcoten 1)(1\diagcoten u1)} \ar@/_.8pc/[rd]^-{u111u1} &&
(YB \coten B YB)^2 \ar[r]^-m &
YB \coten B YB \ar[dd]^-{b_2^{-1}} \ar[r]^-{q_2^{-1}} &
(YB\coten B I)YB \ar@/^1.4pc/[ddd]^-{p_{YB}11} \ar[dd]_-{f^{-1}11}\\
& YBY \ar[d]^-{1x} \ar[ldd]_-{111u} \ar[u]^-{111u} &
(Y^2B)^2 \ar@/_.8pc/[ru]^-{b_2b_2} \ar@{}[rrd]|-{\textrm{(d')}} \\
& Y^2B \ar[rrd]^-{m1} &&& 
Y^2B \ar[ld]_-{m1} \ar@{=}[r] &
Y^2B\ar[d]_-{1u11} \ar[ld]_(.45){11u1}\\
(YB)^2 \ar[r]_-{1x1} &
Y^2B^2 \ar[rr]_-{mm} \ar[u]_-{11m} &&
YB &
Y^2B^2 \ar[l]^-{mm} \ar[u]_-{11m} &
(YB)^2 . \ar[l]^-{1x1}}
$$
The region at the top-right corner is the commutative diagram of Lemma \ref{lem:bn}~(4) for $n=1$. 
The region bounded from below by the curved arrows commutes by Lemma \ref{lem:b2-u}. 
The region marked by (d')  is coincides with the diagram of part (d') hence it commutes.
\end{proof}

\begin{example} \label{ex:SCat_groupoid}
As in Example \ref{ex:SplitEpi_groupoid}, take the (evidently admissible and
monoidal) class of all spans in the category $\mathsf C$ of spans over a given
set $X$. Then the equivalent categories of Theorem \ref{thm:SCat_vs_Xmod} take
the following forms. 
\begin{itemize}
\item[{$\mathsf{Cat}$}]\hspace{-.3cm} $\mathsf{Mon}(\mathsf C)$whose \\
\underline{objects} are the double categories with the object set $X$ and only identity horizontal morphisms and such that the morphism \eqref{eq:q_groupoid} is invertible. (This last condition holds e.g. if the vertical edge category is a groupoid.) \\
\underline{morphisms} are the double functors which are identities on the objects (and hence on the horizontal morphisms).
\item[{$\mathsf{X}\hspace{.3cm}$}]\hspace{-.58cm} $\mathsf{mod}(\mathsf C)$
  whose \\
\underline{objects} consist of categories $B$ and $Y$ with the common object
set $X$ such that in $Y$ there are no morphisms between different objects; 
an action (see Example \ref{ex:SplitEpi_groupoid}) 
$\xymatrix@C=12pt{B \coten X Y \ar[r]^-\triangleright & Y}$
and an identity-on-objects functor 
$\xymatrix@C=12pt{Y \ar[r]^-\kappa & B}$
such that 
$$
\kappa(b \triangleright y).b=b.\kappa(y)
\qquad \textrm{and}\qquad
(\kappa(y) \triangleright y').y=y.y'
$$
for all morphisms $b$ in $B$ and $y,y'$ in $Y$ for which $s(b)=t(y)=t(y')$.\\
\underline{morphisms} are the same as the morphisms in $\mathsf{PreXMon(C)}$,
see Example \ref{ex:ReflGraph_groupoid}.
\end{itemize}

These equivalent categories have equivalent full subcategories in whose objects the
occurring category $B$ is a groupoid; and other equivalent full subcategories
in whose objects both occurring categories are groupoids. In the latter case
these are the category of categories in the category of groupoids; and the
category of {\em crossed modules of groupoids} in \cite[Definition
1.2]{BrownIcen}, respectively.   
\end{example}

\begin{example} \label{ex:SCat_CoMon}
In the setting of Example \ref{ex:CoMon_SplitEpi}, the equivalent categories of Theorem \ref{thm:SCat_vs_Xmod} take the following explicit forms. 
\begin{itemize}
\item[{$\mathsf{Cat}$}]\hspace{-.3cm} $\mathsf{Mon}_{\mathcal S}(\mathsf C)$ whose \\
\underline{objects} are $\mathcal S$-relative categories 
$\xymatrix@C=20pt{
B \ar@{ >->}[r]|(.55){\, i\, } &
A \ar@{->>}@<-4pt>[l]_-s  \ar@{->>}@<4pt>[l]^-t &
A\coten B A \ar[l]_-d}$ 
in the category of mon\-oids in $\mathsf C$ --- that is, in the category of bimonoids in $\mathsf M$ --- such that the morphism $q$ of Theorem \ref{thm:SplitEpi_vs_DLaw}~(b) is invertible.  
\\
\underline{morphisms} are $\mathcal S$-relative functors in the category of monoids in $\mathsf C$ --- that is, in the category of bimonoids in $\mathsf M$.
\smallskip

\item[{$\mathsf{X}\hspace{.3cm}$}]\hspace{-.58cm} $\mathsf{mod}_{\mathcal S}(\mathsf C)$ whose \\ 
\underline{objects} consist of a bimonoid $Y$ and a cocommutative bimonoid $B$ together with a left action 
$\xymatrix@C=12pt{BY \ar[r]^-l & Y}$ which makes $Y$ both a $B$-module monoid and a $B$-module comonoid and a bimonoid morphism 
$\xymatrix@C=12pt{Y \ar[r]^-k & B}$ for which the following diagrams commute.
$$
\xymatrix@C=10pt{
Y \ar[r]^-\delta \ar[d]_-\delta & 
Y^2 \ar[r]^-c &
Y^2 \ar[d]^-{k1} \\
Y^2 \ar[rr]_-{k1} &&
BY}
\quad
\xymatrix@C=2pt{
BY \ar[rr]^-{\delta 1} \ar[d]_-{1k} &&
B^2Y \ar[rr]^-{1c} &&
BYB \ar[rr]^-{l1} &&
YB \ar[d]^-{k1} \\
B^2 \ar[rrr]_-m &&&
B &&&
B^2 \ar[lll]^-m}
\quad
\xymatrix@C=2pt{
Y^2 \ar[rr]^-{\delta 1} \ar@{=}[d] &&
Y^3 \ar[rr]^-{1c} &&
Y^3 \ar[rr]^-{k11} &&
BY^2 \ar[d]^-{l1} \\
Y^2 \ar[rrr]_-m &&&
Y &&&
Y^2 \ar[lll]^-m}
$$
The third condition appears in \cite[Definition 12~(v)]{Villanueva} under the name {\em Peiffer condition} (motivated by the terminology for groups). \\
\underline{morphisms} are pairs of monoid morphisms 
$(\xymatrix@C=12pt{B \ar[r]^-b & B'},\xymatrix@C=12pt{Y \ar[r]^-y & Y'})$
such that $e'.y=e$, $k'.y=b.k$ and $x'.by=yb.x$.
\end{itemize}
These equivalent categories are equivalent furthermore to the full subcategory of $\mathsf{ReflGraphMon}_{\mathcal S}(\mathsf C)$ of Example \ref{ex:ReflGraph_groupoid} for whose objects 
$\xymatrix@C=20pt{
B \ar@{ >->}[r]|(.55){\, i\, } &
A \ar@{->>}@<-4pt>[l]_-s  \ar@{->>}@<4pt>[l]^-t}$
the following diagrams commute.
\begin{equation}
\xymatrix@C=142pt{
A \ar[r]^-\delta \ar[d]_-\delta & 
A^2 \ar[r]^-c &
A^2 \ar[d]^-{t1} \\
A^2 \ar[rr]_-{t1} &&
BA}
\end{equation}
\begin{equation}\label{eq:d_exists_CoMon}
\xymatrix{
A(A \coten B I) \ar[r]^-{\delta 1} \ar[d]_-{1p_A} &
A^2(A \coten B I) \ar[r]^-{tc} &
B(A \coten B I)A\ar[r]^-{ip_A1} &
A^3 \ar[r]^-{m1} &
A^2 \ar[r]^-{q^{-1}1} &
(A \coten B I)BA \ar[d]^-{p_A\varepsilon_B  1} \\
A^2 \ar[rr]_-m &&
A &&&
A^2 \ar[lll]^-m}
\end{equation}

The above description of $\mathsf{CatMon}_{\mathcal S}(\mathsf C)$ requires no further explanation. 
In the description of $\mathsf{Xmod}_{\mathcal S}(\mathsf C)$ we need to show that the third diagram (the Peiffer condition) is equivalent to the diagram of Theorem \ref{thm:SCat_vs_Xmod}~(d') in the current setting. The path on the right hand side of the diagram of Theorem \ref{thm:SCat_vs_Xmod}~(d') appears as the left bottom path of the commutative diagram
$$
\xymatrix@C=40pt@R=15pt{
% 1.1
YBY \ar[d]_-{u111uu} \ar[r]^-{\delta_{YB}1} &
% 2.1
(YB)^2Y \ar[d]^-{u11u111uu} \ar[rr]^-{k1111} &&
% 4.1
B^2YBY \ar[dd]^-{m111} \ar[ld]_-{u11111uu} \\
% 1.2
(Y^2B)^2 \ar[d]_-{b_2b_2} \ar[r]^-{\delta_{Y^2B}111} &
% 2.2
(Y^2B)^3 \ar[r]^-{1k1\varepsilon_Y 11111} &
% 3.2
YB^2YBY^2B \ar[d]^-{1m11111} \\
% 1.3
(YB \coten B YB)^2 \ar[dddd]_-m \ar[r]_-{jj} \ar@/^1pc/[rr]^-{jb_2^{-1}} &
% 2.3
(YB)^4 \ar[dd]^-{11c_{YB,YB}11} \ar[r]_-{11111\varepsilon_B 11} &
% 3.3
(YB)^2Y^2B\ar[d]^-{11c_{YB,Y}11} &
% 4.3
(BY)^2 \ar[d]^-{1c_{YB,Y}} \ar[l]_-{u1111uu}  \\
% 3.4
&& YBY^2BYB \ar[d]^-{1l1111} &
% 4.4
BY^2B \ar[d]^-{l11} \ar[l]_-{u1111uu}  \\
% 2.5
& (YB)^4 \ar[d]^-{1x11x1} \ar[r]^-{1l\varepsilon_B1111} &
% 3.5
Y^3BYB \ar[d]^-{111x1} &
% 4.5
Y^2B \ar@{=}[dd] \ar[l]_-{u111uu} \ar[ld]^-{u11u1u} \\
% 2.6
& (Y^2B^2)^2 \ar[d]^-{mmmm} \ar[r]^-{11\varepsilon_B\varepsilon_B 1111} &
% 3.6
Y^4B^2 \ar[rd]^-{mmm} \\
% 1.7
YB \coten B YB \ar[r]^-j \ar@/_1.8pc/[rrr]_-{b_2^{-1}} &
%2.7
(YB)^2 \ar[rr]^-{1\varepsilon_B 11}  &&
% 4.7
Y^2B \ar[r]_-{m1} &
% 5.7
YB}
$$
(in which $x$ stands for the distributive law 
$\xymatrix@C=12pt{BY \ar[r]^-{\delta 1} & B^2Y \ar[r]^-{1c} & BYB \ar[r]^-{l1} & YB}$ 
of Example \ref{ex:CoMon_SplitEpi}). Hence it can be replaced by the 
top right path yielding the equivalent form 
\begin{equation}\label{eq:d'_equivalent}
\xymatrix@C=10pt@R=3pt{
YBY \ar[rr]^-{\delta_{YB} 1} \ar[dd]_-{1\delta_B 1} &&
(YB)^2Y \ar[rr]^-{k1111} &&
B^2YBY \ar[rr]^-{m111} &&
(BY)^2 \ar[ddd]^-{1c_{YB,Y}} \\
\\
YB^2Y \ar[dd]_-{11c} \\
&&&&&& BY^2B \ar[ddd]^-{l11} \\
(YB)^2 \ar[dd]_-{1l1} \\
\\
Y^2B \ar[rrr]_-{m1} &&&
YB &&&
Y^2B \ar[lll]^-{m1}}
\end{equation}
of the diagram of Theorem \ref{thm:SCat_vs_Xmod}~(d').
The first diagram of Figure \ref{fig:d'_equivalent} shows that if the diagram of \eqref{eq:d'_equivalent} commutes then the Peiffer condition in the above presentation of $\mathsf{Xmod}_{\mathcal S}(\mathsf C)$ holds. The opposite implication is proven by the second diagram of Figure \ref{fig:d'_equivalent}.
\begin{figure} 
\centering
\begin{sideways}
$\xymatrix@C=55pt@R=15pt{
% 1.1
Y^2 \ar[rrr]^-{\delta 1} \ar@{=}[ddddd] \ar[rd]^-{1u1} &&&
% 1.4
Y^3 \ar[rr]^-{k11} \ar[d]^-{1u1u1} \ar[ld]_-{11uu1} &&
% 1.6
BY^2 \ar@/^1.1pc/@{=}[rd] \ar[d]^-{11u1} \ar[ld]_-{1u1u1}  \\
% 2.2
& YBY \ar[r]^-{\delta\delta 1} \ar[rd]_-{1\delta 1} \ar@{=}[ddd] &
% 2.3
Y^2B^2Y \ar[r]^-{1c11} &
% 2.4
(YB)^2Y \ar[r]^-{k1111} &
% 2.5
B^2YBY \ar[r]^-{m111} &
% 2.6
(BY)^2 \ar[r]^-{11\varepsilon_B 1} \ar[d]^-{1c_{YB,Y}} &
% 2.7
BY^2 \ar[d]^-{1c} \\
% 3.3
&& YB^2Y \ar[ldd]_(.4){11\varepsilon_B 1} \ar[d]^-{11c} &&&
% 3.6
BY^2B \ar[r]^-{111\varepsilon_B} \ar[d]^-{l11} &
% 3.7
BY^2 \ar[d]^-{l1} \\
% 4.2
& 
%YBY \ar@{=}[d] 
&
% 4.3
(YB)^2\ar[ld]_(.3){111\varepsilon_B} \ar[d]^-{1l1} &&&
% 4.6
Y^2B \ar[d]^-{m1} &
% 4.7
Y^2 \ar[dd]^-m \\
% 5.2
& YBY \ar[ld]_-{1l} &
% 5.3
Y^2B \ar[lld]_(.3){11\varepsilon_B} \ar[rrr]^-{m1} &&&
% 5.6
YB \ar[rd]^-{1\varepsilon_B} \\
% 6.1
Y^2 \ar[rrrrrr]_-m &&&&&&
% 6.7
Y \\
\\
%}$
%$
%\xymatrix{
% 1.1
YBY \ar[rr]^-{1\delta 1} &&
% 1.2
YB^2Y \ar[r]^-{\delta 111} \ar[d]_-{11c} &
% 1.3
Y^2B^2Y \ar[r]^-{1c11} &
% 1.4
(YB)^2Y \ar[r]^-{k1111} &
% 1.5
B^2YBY \ar[r]^-{m111} &
% 1.6
(BY)^2 \ar[d]^-{11c} \\
% 2.2
&& (YB)^2 \ar[r]^-{\delta 111} \ar[ddd]_-{1l1} &
% 2.3
Y^2BYB \ar[r]^-{1c11} \ar[dd]^-{11l1} &
% 2.4
YBY^2B \ar[r]^-{k1111} \ar[d]^-{11c1} &
% 2.5
B^2Y^2B \ar[r]^-{m111} &
% 2.6
BY^2B \ar[d]^-{1c1} \\
% 3.4
&&&& YBY^2B \ar[r]^-{k1111} \ar[d]^-{1l11} &
% 3.5
B^2Y^2B \ar[r]^-{m111} \ar[d]^-{1l11} &
% 3.6
BY^2B \ar[d]^-{l11} \\
% 4.3
&&& Y^3B \ar[r]^-{1c1} &
% 4.4
Y^3B \ar[r]^-{k111} &
% 4.5
BY^2B \ar[r]^-{l11} &
% 4.6
Y^2B\ar[d]^-{m1} \\
% 5.2
&& Y^2B \ar[rrrr]_-{m1} \ar[ru]^-{\delta 11}  &&&&
% 5.6
YB}
$
\end{sideways}
\caption{Derivation of the Peiffer condition for crossed modules of bimonoids}
\label{fig:d'_equivalent}
\end{figure}

In order to justify the further equivalent characterization of these categories as a full subcategory of $\mathsf{ReflGraphMon}_{\mathcal S}(\mathsf C)$, we need to see  the equivalence of the diagram of Proposition \ref{prop:relcat-monoid}~(2) in the current setting and the diagram of \eqref{eq:d_exists_CoMon}. This follows by noting that the top row of the diagram of Proposition \ref{prop:relcat-monoid}~(2) in the current setting appears in the left-bottom path of the commutative diagram 
$$
\xymatrix@R=15pt{
% 1.1
A(A\coten B I) \ar[r]^-{\delta 1} \ar[dd]_-{1p_A} \ar[rd]^-{\delta \delta} &
% 1.2
A^2(A\coten B I) \ar[rd]^-{11p_A} \ar[rrr]^-{1c} &&&
% 1.5
A(A\coten B I)A \ar[ld]_-{1p_A1} \ar[d]^-{t11} \\
% 2.2
& A^2(A\coten B I)^2 \ar[d]^-{11p_Ap_A} \ar[r]^(.5){11p_Ap_I} &
% 2.3
A^3 \ar[r]^-{1c} \ar[d]^-{t11} &
% 2.4
A^3 \ar[rd]^-{t11} &
% 2.5
B(A\coten B I)A \ar[d]^-{1p_A1} \\
% 3.1
A^2 \ar[r]^-{\delta \delta} \ar[dd]_-{(i \, \diagcoten 1)(1\diagcoten \, i)} &
% 3.2
A^4 \ar[d]^-{t11s} &
% 3.3
BA^2 \ar[d]^-{i11} \ar[ld]_-{111u} &&
% 3.5
BA^2 \ar[d]^-{i11} \\
% 4.2
& BA^2B \ar[d]^-{i11i} &
% 4.3
A^3 \ar[rr]^-{1c} \ar[ld]_-{111u} &&
% 4.5
A^3 \ar[dd]^-{m1} \ar[ld]_-{111u} \\
% 5.1
(A\coten B A)^2 \ar[r]^-{jj} \ar[dd]_-m &
% 5.2
A^4 \ar[rr]^-{1c1} &&
% 5.4
A^4 \ar[rd]^-{mm} \\
% 6.5
&&&& A^2 \ar[d]^-{q^{-1}1} \\
% 7.1
A\coten B A \ar[rr]_-{q^{-1}\diagcoten 1} \ar[rrrru]^-j \ar@/_1.1pc/[rrrrd]_-{q_2^{-1}} &&
% 7.3
(A\coten B I)B \coten B A \ar[rr]_-j \ar[rrd]_(.45){h_1^{-1}} \ar@{}[d]|-{\eqref{eq:qBox1.hn}} &&
% 7.5
(A\coten B I)BA \ar[d]^-{1\varepsilon_B 1} \\
% 8.5
&&&& (A\coten B I)A}
$$
hence it can be replaced by the top-right path. (The expression of $h_1^{-1}$ in the bottom-right corner was computed in Example \ref{ex:hn_CoMon}.)
\end{example}

\begin{proposition} \label{prop:relcat_Hopf}
The equivalent categories of Example \ref{ex:SCat_CoMon} have equivalent full subcategories as follows.
\begin{itemize}
\item The full subcategory of $\mathsf{CatMon}_{\mathcal S}(\mathsf C)$ for whose objects 
$\xymatrix@C=20pt{
B \ar@{ >->}[r]|(.55){\, i\, } &
A \ar@{->>}@<-4pt>[l]_-s  \ar@{->>}@<4pt>[l]^-t &
A\coten B A \ar[l]_-d}$ 
the bimonoid $B$ in $\mathsf M$ is a Hopf monoid.
\item The full subcategory of $\mathsf{Xmod}_{\mathcal S}(\mathsf C)$ for whose objects 
$(B,Y,\xymatrix@C=12pt{BY \ar[r]^-l & Y},\xymatrix@C=12pt{Y \ar[r]^-k & B})$
the bimonoid $B$ in $\mathsf M$ is a Hopf monoid.
\item The full subcategory of $\mathsf{ReflGraphMon}_{\mathcal S}(\mathsf C)$ for whose objects 
$\xymatrix@C=20pt{
B \ar@{ >->}[r]|(.55){\, i\, } &
A \ar@{->>}@<-4pt>[l]_-s  \ar@{->>}@<4pt>[l]^-t}$ 
the following conditions hold.
\begin{itemize}
\item $B$ is a Hopf monoid (with antipode $z$)
\item $t1.\delta=t1.c.\delta$
\item for the morphisms 
$$
\overrightarrow s:=\xymatrix@C=10pt{
A \ar[r]^-\delta & A^2 \ar[r]^-{1s} & AB \ar[r]^-{1z} & AB \ar[r]^-{1i} & A^2 \ar[r]^-m & A}, \ 
\overleftarrow t:=\xymatrix@C=10pt{
A \ar[r]^-\delta & A^2 \ar[r]^-{t1} & BA \ar[r]^-{z1} & BA \ar[r]^-{i1} & A^2 \ar[r]^-m & A}
$$
the following diagram commutes.
\begin{equation} \label{eq:vil_d}
\xymatrix{
A^2 \ar[r]^-{\overrightarrow s \overleftarrow t}   \ar[d]_-{\overrightarrow s \overleftarrow t} &
A^2 \ar[r]^-c &
A^2 \ar[d]^-m \\
A^2 \ar[rr]_-m &&
A}
\end{equation}
\end{itemize}
\end{itemize}
\end{proposition}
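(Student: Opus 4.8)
The plan is to deduce the proposition from the equivalences already established in Example~\ref{ex:SCat_CoMon} (equivalently, from Theorem~\ref{thm:SCat_vs_Xmod}) by restricting them to the stated full subcategories. Recall that an equivalence of categories restricts to an equivalence between full subcategories as soon as an object lies in one subcategory precisely when its image lies in the other; on morphisms there is nothing to check, since full subcategories inherit their hom-sets. Now all the equivalence functors of Example~\ref{ex:SCat_CoMon} leave the monoid $B$ unchanged, so the property ``$B$ is a Hopf monoid'' is visibly respected by them. This already yields the equivalence of the first two listed subcategories, those inside $\mathsf{CatMon}_{\mathcal S}(\mathsf C)$ and $\mathsf{Xmod}_{\mathcal S}(\mathsf C)$. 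Moreover, whenever $B$ is a Hopf monoid, condition (b) of $\mathsf{ReflGraphMon}_{\mathcal S}(\mathsf C)$ (invertibility of $q$) holds automatically by Proposition~\ref{prop:SplitEpi_Hopf}, so it imposes no further constraint.

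It remains to identify the third listed subcategory with the image of the first. From Example~\ref{ex:SCat_CoMon} the relevant full subcategory of $\mathsf{ReflGraphMon}_{\mathcal S}(\mathsf C)$ is cut out, on top of the standing conditions (a) and (b) satisfied by every object there, by its two displayed diagrams: the condition $t1.\delta=t1.c.\delta$ and the diagram~\eqref{eq:d_exists_CoMon}. The equation $t1.\delta=t1.c.\delta$ is exactly the $t$-analogue of condition (a): it expresses that the span $\xymatrix@C=12pt{B & \ar[l]_-t A \ar@{=}[r] & A}$ lies in $\mathcal S$, which is in any case needed for $A\coten B A$ to exist. Since this condition appears verbatim in the third bullet as well, and condition (a) is part of being an object of $\mathsf{ReflGraphMon}_{\mathcal S}(\mathsf C)$, the whole statement reduces to the claim that, for an object of $\mathsf{ReflGraphMon}_{\mathcal S}(\mathsf C)$ with $B$ a Hopf monoid and satisfying $t1.\delta=t1.c.\delta$, diagram~\eqref{eq:d_exists_CoMon} is equivalent to diagram~\eqref{eq:vil_d}.

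To prove this last equivalence I would use the explicit antipode-built formula for $q^{-1}$ from Proposition~\ref{prop:SplitEpi_Hopf} (Figure~\ref{fig:qinv}). A direct computation gives $(p_A\varepsilon_B).q^{-1}=\overrightarrow s$, so that the occurrence of $q^{-1}$ in~\eqref{eq:d_exists_CoMon} produces precisely the twisted source map $\overrightarrow s$. Substituting this, and rewriting the $i\,t$-strand on the top row of~\eqref{eq:d_exists_CoMon} by inserting the antipode axiom $m.1z.\delta=u\varepsilon$ and using the cocommutativity of $B$, the content of~\eqref{eq:d_exists_CoMon} turns into a relation between $\overrightarrow s$ and the twisted target map $\overleftarrow t$. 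Using that $\overrightarrow s$ and $\overleftarrow t$ land in the respective ``kernels'' of $s$ and $t$ (that is, $s.\overrightarrow s=u\varepsilon$ and $t.\overleftarrow t=u\varepsilon$), together with condition (a) and with $t1.\delta=t1.c.\delta$, this relation collapses exactly to the commutativity assertion~\eqref{eq:vil_d}, namely $m.\overrightarrow s\,\overleftarrow t=m.c.\overrightarrow s\,\overleftarrow t$. I would record the two implications as a pair of large commutative diagrams in $\mathsf M$, in the spirit of Figures~\ref{fig:qinv} and~\ref{fig:q_qinv}.

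The only genuine work, and the main obstacle, is precisely this diagrammatic equivalence of~\eqref{eq:d_exists_CoMon} and~\eqref{eq:vil_d}: the careful bookkeeping of the several antipode insertions and comultiplication legs, and the disciplined use of (a), of $t1.\delta=t1.c.\delta$, and of the Hopf identities to contract the intermediate $B$-strands so that $\overrightarrow s$ and $\overleftarrow t$ emerge in the correct order. Everything else—the restriction of the three equivalence functors to the Hopf-monoid subcategories and the redundancy of condition (b)—is formal and follows from the results recalled above.
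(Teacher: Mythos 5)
Your proposal follows essentially the same route as the paper: the restriction of the three equivalences and the redundancy of condition (b) are formal consequences of Proposition \ref{prop:SplitEpi_Hopf}, and the whole statement reduces to the equivalence of diagrams \eqref{eq:d_exists_CoMon} and \eqref{eq:vil_d} when $B$ has an antipode. Your key identity $(p_A\varepsilon_B).q^{-1}=\overrightarrow{s}$ and the plan of contracting the $B$-strands via the antipode axiom are exactly the paper's devices (there implemented through the retraction $g_A=1\varepsilon.q^{-1}$ of $p_A$, the idempotency of $\overrightarrow{s}$, and the convolution-inversion observation \eqref{eq:conv}), so what remains is only the diagram chasing you already flag as the residual work.
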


\begin{proof}
The only ingredient that requires a proof is the equivalence of diagrams \eqref{eq:d_exists_CoMon} and \eqref{eq:vil_d} in the case when $B$ has an antipode $z$. 
The proof will repeatedly use the identity on $\overrightarrow s$ encoded in the following commutative diagram.
\begin{equation} \label{eq:sbar_id}
\xymatrix@R=15pt{
% 1.1
A^2 \ar[rrrrrr]^-m \ar[rd]^-{\delta 1} \ar[d]_-{1\overrightarrow s} &&&&&&
% 1.7
A \ar[d]_-\delta \ar@/^1.2pc/[dddddd]^-{\overrightarrow s} \\
% 2.1
A^2 \ar[ddddd]_-{\delta 1} &
% 2.2
A^3 \ar[r]^-{11\delta} \ar[dd]^-{1c} \ar[lddddd]_-{11\overrightarrow s} &
% 2.3
A^4 \ar[rr]^-{1c1} &&
% 2.5
A^4 \ar[rr]^-{mm} \ar[d]^-{11ss} \ar[ldd]_-{11c} &&
% 2.7
A^2 \ar[d]_-{1s} \\
% 3.5
&&&& A^2B^2 \ar[rr]^-{mm} \ar[d]^-{11c} &&
% 3.7
AB \ar[dd]_-{1z} \\
% 4.2
& A^3 \ar[rr]^-{1\delta 1} \ar[ddd]^-{1\overrightarrow s 1} &&
% 4.4
A^4 \ar[r]^-{11ss} &
% 4.5
A^2B^2 \ar[d]^-{11zz} \\
% 5.5
&&&& A^2B^2 \ar[rr]^-{mm} \ar[d]^-{11ii} &&
% 5.7
AB \ar[d]_-{1i} \\
% 6.5
&&&& A^4 \ar[rr]^-{mm} \ar[d]^-{1m1} &&
% 6.7
A^2 \ar[d]_-m \\
% 7.1
A^3 \ar[r]_-{1c} &
% 7.2
A^3 \ar[r]_-{11s} &
% 7.3
A^2B \ar[r]_-{11z}  &
% 7.4
A^2B \ar[r]_-{11i} &
% 7.5
A^3 \ar[r]_-{m1} &
% 7.6
A^2 \ar[r]_-m &
% 7.7
A}
\end{equation}

Recall from \cite{Radford} that if $B$ has an antipode $z$ then $\xymatrix@C=12pt{A\coten B I \ar[r]^-{p_A} & A}$ is a split monomorphism in $\mathsf M$; a retraction is provided by 
$g_A:=\xymatrix@C=15pt{
A \ar[r]^-{q^{-1}} &
(A \coten B I)B \ar[r]^-{1\varepsilon} &
A \coten B I}$.
Indeed,
$$
g_A.p_A=
1\varepsilon. q^{-1}.q.1u=
1\varepsilon.1u=1.
$$ 
On the other hand, since in Proposition \ref{prop:SplitEpi_Hopf} $q^{-1}$ was constructed as the unique solution of $p_A1.q^{-1}=\overrightarrow s s.\delta$, also the equality 
$$
p_A.g_A=
p_A.1\varepsilon. q^{-1}=
1\varepsilon. p_A 1.q^{-1}=
1\varepsilon.\overrightarrow s s.\delta=
\overrightarrow s
$$
holds, proving that $\overrightarrow s$ is idempotent.

Pre-composing both paths around \eqref{eq:d_exists_CoMon} with the split epimorphism $1g_A$, we obtain the equivalent diagram
\begin{equation} \label{eq:pre_gA}
\xymatrix@C=20pt@R=20pt{
% 1.1
A^2 \ar[rr]^-{\delta 1} \ar[rd]^-{1g_A} \ar[ddd]_-{1 \overrightarrow s} &&
% 1.3
A^3 \ar[r]^-{1c} \ar[d]^-{11g_A} &
% 1.4
A^3 \ar[r]^-{t11} &
% 1.5
BA^2 \ar[r]^-{i11} \ar[d]^-{1g_A 1} &
% 1.6
A^3 \ar[r]^-{m1} \ar[d]^-{1\overrightarrow s 1} &
% 1.7
A^2 \ar[ddd]^-{\overrightarrow s 1} \\
% 2.2
& A(A \coten B I) \ar[r]^-{\delta 1} \ar[ldd]^-{1p_A} &
% 2.3
A^2(A \coten B I) \ar[r]^-{1c} &
% 2.4
A(A \coten B I)A \ar[r]^-{t11} &
% 2.5
B(A \coten B I)A \ar[r]^-{ip_A 1} &
% 2.6
A^3 \ar[d]^-{m1} \\
% 3.6
&&&&& A^2 \ar[rd]^-{\overrightarrow s 1} \\
% 4.1
A^2 \ar[rrr]_-m &&&
% 4.4
A &&&
% 4.7
A^2. \ar[lll]^-m}
\end{equation}
Its rightmost region commutes by \eqref{eq:sbar_id} and the fact that $\overrightarrow s$ is idempotent. 

The morphism around the right hand side of \eqref{eq:pre_gA} occurs as the left-bottom path of the commutative diagram
$$
\xymatrix@R=15pt{
% 1.1
A^2 \ar[r]^-{\delta 1} \ar[d]_-{\delta 1} &
% 1.2
A^3 \ar[r]^-{1c} \ar[d]^-{1\delta 1} &
% 1.3
A^3 \ar[rr]^-{t11} \ar[dd]^-{11\delta} &&
% 1.5
BA^2 \ar[ddddd]^-{i11} \\
% 2.1
A^3 \ar[r]^-{\delta 11} \ar[d]_-{1c} &
% 2.2
A^4 \ar[d]^-{11c} \\
% 3.1
A^3 \ar[r]^-{\delta 11} \ar[d]_-{t11} &
% 3.2
A^4 \ar[r]^-{1c1} \ar[d]^-{tt11} &
% 3.3
A^4 \ar[d]^-{t1t1} \\
% 4.1
BA^2 \ar[d]_-{i11} &
% 4.2
B^2A^2 \ar[d]^-{ii11} &
% 4.3
(BA)^2 \ar[d]^-{i1i1} \\
% 5.1
A^3 \ar[r]^-{\delta 11} \ar[ddd]_-{m1} &
% 5.2
A^4 \ar[r]^-{1c1} \ar@{}[dddddd]|-{\eqref{eq:sbar_id}} &
% 5.3
A^4 \ar[d]^-{11s1} \\
% 6.3
&& A^2BA \ar[d]^-{11z1} &
% 6.4
A^4 \ar[l]_-{11t1} &
% 6.5
A^3 \ar[l]_-{11\delta} \ar[d]^-{11\overleftarrow t} \\
% 7.3
&& A^2BA \ar[r]^-{11i1} &
% 7.4
A^4 \ar[r]^-{11m} \ar[d]^-{1\overrightarrow s 11} &
% 7.5
A^3 \ar[d]^-{1\overrightarrow s 1} \\
% 8.1
A^2 \ar[ddd]_-{\overrightarrow s 1} &&&
% 8.4
A^4 \ar[r]^-{11m} \ar[ld]^-{m11} &
% 8.5
A^3 \ar[ddd]^-{1m}  \\
% 9.3
&& A^3 \ar[lldd]^-{m1} \\
\\
% 10.1
A^2 \ar[rr]_-m &&
% 10.3
A &&
% 10.5
A^2. \ar[ll]^-m}
$$
Hence it can be replaced by the top-right path yielding the equivalent form
\begin{equation} \label{eq:pre_gA_2}
\xymatrix{
A^2 \ar[r]^-{\delta 1} \ar[d]_-{1\overrightarrow s} &
A^3 \ar[r]^-{t11} &
BA^2 \ar[rr]^-{i11} &&
A^3 \ar[r]^-{1c} &
A^3 \ar[r]^-{1\overrightarrow s \overleftarrow t} &
A^3 \ar[d]^-{1m} \\
A^2 \ar[rrr]_-m &&&
A &&&
A^2 \ar[lll]^-m}
\end{equation}
of \eqref{eq:pre_gA}.

Finally, observe that for any morphisms
$\xymatrix@C=15pt{A^2 \ar[r]^-{\phi,\psi} & A}$
the following diagrams are equivalent:
\begin{equation} \label{eq:conv}
\xymatrix{
A^2 \ar[r]^-{\delta 1} \ar[d]_-\psi &
A^3 \ar[r]^-{t11} &
BA^2 \ar[r]^-{i11} &
A^3 \ar[d]^-{1\phi} \\
A &&& 
A^4 \ar[lll]^-m}\qquad
\xymatrix{
A^2 \ar[r]^-{\delta 1} \ar[d]_-\phi &
A^3 \ar[r]^-{t11} &
BA^2 \ar[r]^-{z11} &
BA^2 \ar[r]^-{i11} &
A^3 \ar[d]^-{1\psi} \\
A &&&&
A^4. \ar[llll]^-m}
\end{equation}
Indeed, the first diagram below shows that if the first diagram of \eqref{eq:conv} commutes then so does the second one; and the opposite implication follows by the second diagram below.
$$
\scalebox{1.01}{$
\xymatrix@C=15pt@R=19pt{
% 1.1
A^2 \ar[r]^-{\delta 1} \ar[d]^-{\delta 1} \ar@/_1.2pc/@{=}[dddd] &
% 1.2
A^3 \ar[r]^-{t11} \ar[d]^-{1 \delta 1} &
% 1.3
BA^2 \ar[r]^-{z11} &
% 1.4
BA^2 \ar[r]^-{i11} &
% 1.5
A^3 \ar[d]_-{1\delta 1} \ar@/^1.6pc/[dddddd]^(.2){1\psi} \\
% 2.1
A^3 \ar[r]^-{\delta 11}  \ar[d]^-{t11} &
% 2.2
A^4 \ar[r]^-{t111}  &
% 2.3
BA^3 \ar[r]^-{z111} \ar[d]^-{1t11} &
% 2.4
BA^3 \ar[r]^-{i111} &
% 2.5
A^4 \ar[d]_-{1t1 1} \\
% 3.1
BA^2 \ar[rr]^-{\delta 11}  \ar[dd]^-{\varepsilon 11} &&
% 3.3
B^2A^2 \ar[r]^-{z111}  &
% 3.4
B^2A^2 \ar[r]^-{i111} \ar[d]^-{m11} &
% 3.5
ABA^2 \ar[dd]_-{1i1 1} \\
% 4.4
&&& BA^2 \ar[d]^-{i11} \\
% 5.1
A^2 \ar[rrr]^-{u11} \ar[rrru]^-{u11} \ar[ddd]_-\phi &&&
% 5.4
A^3 \ar[d]^-{1\phi} &
% 5.5
A^4 \ar[l]_-{m11} \ar[d]_-{11\phi} \\
% 6.4
&&& A^2 \ar[rdd]_-m &
% 6.5
A^3 \ar[l]_-{m1} \ar[d]_-{1m} \\
% 7.5
&&&& A^2 \ar[d]^-m \\
% 8.1
A \ar@{=}[rrrr] \ar[uurrr]_-{u1} &&&&
% 8.5
A}$}\ 
\xymatrix@C=15pt@R=15pt{
% 1.1
A^2 \ar[r]^-{\delta 1} \ar[d]^-{\delta 1} \ar@/_1.2pc/@{=}[ddddd] &
% 1.2
A^3 \ar[r]^-{t11} \ar[d]^-{1 \delta 1} &
% 1.3
BA^2 \ar[r]^-{i11} &
% 1.4
A^3 \ar[d]_-{1\delta 1} \ar@/^1.8pc/[ddddddd]^(.2){1\phi} \\
% 2.1
A^3 \ar[r]^-{\delta 11}  \ar[d]^-{t11} &
% 2.2
A^4 \ar[r]^-{t111}  &
% 2.3
BA^3 \ar[r]^-{i111} \ar[d]^-{1t1 1}&
% 2.4
A^4 \ar[d]_-{1t1 1} \\
% 3.1
BA^2 \ar[rr]^-{\delta 11}  \ar[ddd]^-{\varepsilon 11} &&
% 3.3
B^2A^2 \ar[r]^-{i111}  \ar[d]^-{1z11} &
% 3.4
ABA^2 \ar[d]_-{1z11} \\
% 4.3
&& B^2A^2 \ar[r]^-{i111} \ar[d]^-{m11} &
% 4.4
ABA^2 \ar[dd]_-{1i1 1} \\
% 5.3
&& BA^2 \ar[d]^-{i11} \\
% 6.1
A^2 \ar[rr]^-{u11} \ar[rru]^-{u11} \ar[ddd]_-\psi &&
% 6.3
A^3 \ar[d]^-{1\psi} &
% 6.4
A^4 \ar[l]_-{m11} \ar[d]_-{11\psi} \\
% 7.3
&& A^2 \ar[rdd]_-m &
% 7.4
A^3 \ar[l]_-{m1} \ar[d]_-{1m} \\
% 8.4
&&& A^2 \ar[d]^-m \\
% 9.1
A \ar@{=}[rrr] \ar[uurr]_-{u1} &&&
% 9.4
A}
$$
Applying the equivalence of the diagrams of \eqref{eq:conv} to
$\phi:=\xymatrix@C=8pt{
A^2\ar[r]^-c & A^2\ar[rr]^-{\overrightarrow s \overleftarrow t} && A^2 \ar[r]^-m &A}$
and
$\psi:=\xymatrix@C=15pt{
A^2\ar[r]^-{1 \overrightarrow s } & A^2 \ar[r]^-m &A}$,
we obtain from \eqref{eq:pre_gA_2} the equivalent form
$$
\xymatrix@R=15pt{
% 1.1
A^2 \ar[r]_-{\delta 1} \ar[dd]_-c \ar@/^1.4pc/[rrrrr]^-{\overleftarrow t 1} &
% 1.2
A^3 \ar[r]_-{t11} &
% 1.3
BA^2 \ar[r]_-{z11} &
% 1.4
BA^2 \ar[r]_-{i11} &
% 1.5
A^3 \ar[r]_-{m1} \ar[d]^-{11\overrightarrow s } &
% 1.6
A^2 \ar[d]^-{1\overrightarrow s} \\
% 2.5
&&&& A^3 \ar[r]^-{m1} \ar[d]^-{1m} &
% 2.6
A^2 \ar@/^1.5pc/[ldd]^-m \\
% 3.1
A^2\ar[d]_-{\overrightarrow s  \overleftarrow t} &&&&
% 3.5
A^2 \ar[d]^-m \\
% 4.1
A^2 \ar[rrrr]_-m &&&&
% 4.5
A }
$$
which is equivalent to \eqref{eq:vil_d} by the naturality of the symmetry $c$.
\end{proof}

The equivalent categories of Proposition \ref{prop:relcat_Hopf} have equivalent full subcategories in whose objects both occurring bimonoids are Hopf monoids, and other equivalent full subcategories in whose objects they are both cocommutative Hopf monoids.
In this way,  Proposition \ref{prop:relcat_Hopf} includes 
\cite[Proposition 11]{Villanueva} and \cite[Theorem 14]{Villanueva} about the equivalence between 
the category of so-called $\mathsf{Cat}^1$-Hopf algebras and the category of crossed modules over Hopf algebras; hence in particular the equivalence between the category of $\mathsf{Cat}^1$-groups and the category of crossed modules over groups in \cite[Section 3.9]{Janelidze}.

%%%%%%%%%%%%%%%%%%%%%%%%%%%%%%% BIBLIOGRAPHY   %%%%%%%%%%%%%%%%%%%%%%%%%%%%%%

\bibliographystyle{plain}

\begin{thebibliography}{10}

\bibitem{Aguiar}
Marcelo Aguiar,
{\em Internal Categories and Quantum Groups}, 
Ph.D. thesis, Cornell University 1997.
 
\bibitem{Bohm:Xmod_I}
Gabriella B\"ohm,
{\em Crossed modules of monoids I. Relative categories},
preprint available at
\href{https://arxiv.org/abs/1803.03418}{arXiv:1803.03418}

\bibitem{Bohm:Xmod_III}
Gabriella B\"{o}hm,
{\em Crossed modules of monoids III. Simplicial monoids of Moore length 1},
in preparation.

\bibitem{BrownIcen}
Ronald Brown and \.{I}lhan \.{I}\c{c}en,
{\em Homotopies and Automorphisms of Crossed Modules of Groupoids},
Appl. Categ. Structures 11 no. 2  (2003) 185-206. 

\bibitem{BrownSpencer}
Ronald Brown and Christopher B. Spencer,
{\em $\mathcal G$-groupoids, crossed modules and the fundamental groupoid of a topological group},
Indagationes Mathematicae (Proceedings) 79 no. 4 (1976) 296-302.

\bibitem{Duskin}
John Williford Duskin, 
{\em Preliminary remarks on groups},
as quoted in \cite{BrownSpencer}:  Unpublished notes, Tulane University, (1969).

\bibitem{Emir}
Kadir Emir,
{\em 2-Crossed Modules of Hopf Algebras,}
talk given at ``Category Theory 2016'' Halifax, Canada.
slides available at 
\href{http://mysite.science.uottawa.ca/phofstra/CT2016/slides/Emir.pdf}{\tt http://mysite.science.uottawa.ca/phofstra/CT2016/slides/Emir.pdf}. 

\bibitem{FariaMartins}
Jo\~{a}o Faria Martins,
{\em Crossed modules of Hopf algebras and of associative algebras and
two-dimensional holonomy}, 
J. Geom. Phys. 99 (2016), 68-110. 

\bibitem{GKV}
Marino Gran, Gabriel Kadjo and Joost Vercruysse, 
{\em A torsion theory in the category of cocommutative Hopf
algebras,} 
Appl. Categ. Structures 24 no. 3 (2016), 269-282.

\bibitem{Janelidze}
George Janelidze,
{\em Internal crossed modules,}
Georgian Math. J. 10 (2003), 99-114.

\bibitem{Majid}
Shahn Majid,
{\em Strict quantum 2-groups,}
preprint available at 
\href{https://arxiv.org/abs/1208.6265}{arXiv:1208.6265}.

\bibitem{Paoli}
Simona Paoli,
{\em Internal categorical structures in homotopical algebra,}
in: ``Towards Higher Categories'' J. C. Baez and J. P. May (eds.), 
IMA volumes in Mathematics and its applications pp 85-103, 
Springer 2009.

\bibitem{Porter:Menagerie}
Timothy Porter, 
{\em The Crossed Menagerie:
an introduction to crossed gadgetry and cohomology in algebra and
topology,}
Notes initially prepared for the XVI Encuentro Rioplatense de
\' Algebra y Geometr\'{i}a Algebraica, in Buenos Aires, 12-15 December
2006, extended for an MSc course (Summer 2007) at Ottawa. 
available at 
\href{https://ncatlab.org/timporter/show/crossed+menagerie}{\tt https://ncatlab.org/timporter/show/crossed+menagerie}.

\bibitem{Porter:HQFT}
Timothy Porter,
{\em Homotopy Quantum Field Theories meets the Crossed Menagerie:
an introduction to HQFTs and their relationship with things
simplicial and with lots of crossed gadgetry,} 
Notes prepared for the Workshop and School on Higher Gauge
Theory, TQFT and Quantum Gravity Lisbon, February, 2011.
available at 
\href{https://ncatlab.org/timporter/show/HQFTs+meet+the+Crossed+Menagerie}
{\tt https://ncatlab.org/timporter/show/HQFTs+meet+the+Crossed+Menagerie}.

\bibitem{Radford}
David E. Radford,
{\em The structure of Hopf algebras with a projection,}
J. Algebra 92 no. 2 (1985), 322-347.

\bibitem{Villanueva}
Jos\'e Manuel Fern\'andez Vilaboa, Mar\'{\i}a Purificaci\'on L\'opez L\'opez 
and Emilo Villanueva Novoa,
{\em $\mathsf{Cat}^1$-Hopf Algebras and Crossed Modules,}
Commun. Algebra 35 no. 1 (2006) 181-191.

\bibitem{Whitehead}
John Henry Constantine Whitehead,
{\em On adding relations to homotopy groups}, 
Ann. of Math. 42 no. 2 (1941) 409-428.



\end{thebibliography}

\end{document}